\documentclass[a4paper,12pt]{amsart}
\baselineskip=12pt

\evensidemargin= 0 cm \oddsidemargin= 0 cm \topmargin -0.5cm
\textheight 23.5cm \textwidth 16.3cm

\usepackage{graphicx}
\usepackage{amsmath,amssymb,amsthm,amsfonts}
\usepackage{amssymb}
\usepackage[active]{srcltx}
\usepackage{todonotes}
\usepackage[hyperpageref]{backref}
\usepackage[colorlinks=true,urlcolor=blue,citecolor=red,linkcolor=blue,linktocpage,pdfpagelabels,bookmarksnumbered,bookmarksopen]{hyperref}
\usepackage{esint}
\usepackage{latexsym}
\newtheorem{thm}{Theorem}[section]

\newtheorem{lem}[thm]{Lemma}
\newtheorem{prop}[thm]{Proposition}
\newtheorem{rem}[thm]{Remark}

\newcommand{\R}{{\mathbb{R}}}
\newcommand{\N}{{\mathbb{N}}}

 \numberwithin{equation}{section}

\begin{document}

\parindent 0pc
\parskip 6pt
\overfullrule=0pt

\title[Asymptotic behaviour of solutions]{Asymptotic behaviour of solutions to the anisotropic  doubly critical equation}

\author{Francesco Esposito$^{*}$, Luigi Montoro$^{*}$, Berardino Sciunzi$^{*}$, Domenico Vuono$^{*}$}
\address{$^{*}$Dipartimento di Matematica e Informatica, UNICAL, Ponte Pietro  Bucci 31B, 87036 Arcavacata di Rende, Cosenza, Italy}
\email{francesco.esposito@unical.it, luigi.montoro@unical.it, sciunzi@mat.unical.it, domenico.vuono@unical.it}
\thanks{The authors are members of INdAM. 
The authors are partially supported by PRIN project 2017JPCAPN (Italy): 
{\em Qualitative and quantitative aspects of nonlinear PDEs.} L. Montoro is partially 
supported by  Agencia Estatal de Investigación (Spain): {\em project PDI2019-110712GB-100}.
}

\keywords{Doubly critical equation, asymptotic behaviour, Finsler anisotropic operator,  quasilinear equations}

\subjclass[2020]{35J62, 35B33, 35B40, 35A01}


\maketitle

\date{\today}

\begin{abstract}
The aim of this paper is to deal with the anisotropic  doubly critical equation 
$$-\Delta_p^H u - \frac{\gamma}{[H^\circ(x)]^p} u^{p-1} = u^{p^*-1} \qquad \text{in } \R^N,$$
where $H$ is in some cases called  Finsler norm, $H^\circ$ is the dual norm, $1<p<N$, $0 \leq \gamma< \left((N-p)/p\right)^p$ and $p^*=Np/(N-p)$. In particular, we provide a complete 
 asymptotic analysis of $u \in \mathcal{D}^{1,p}(\R^N)$ near the origin and at infinity, showing that this solution has the same features of its euclidean counterpart. Some of the techniques used in the proofs are new even in the Euclidean framework.
\end{abstract}

\section{Introduction and main results}

This work is devoted to the study of the following anisotropic doubly critical problem
\begin{equation}\label{eq:ProbDoublyCritical} \tag{$\mathcal{P}_H$}
	\begin{cases}
		\displaystyle -\Delta_p^Hu- \frac{\gamma}{H^\circ(x)^p} u^{p-1}= u^{p^*-1} \qquad &\text{in } \R^N\\
		u>0 & \text{in } \R^N\\
		u \in \mathcal{D}^{1,p}(\R^N),
	\end{cases}
\end{equation}
where $1<p<N$, $p^*:=Np/(N-p)$ is the Sobolev critical exponent, $\displaystyle 0 \leq \gamma < C_H:=\left((N-p)/p\right)^p$ is the Hardy constant and 
\begin{equation}\label{eq:anisotropic}\Delta_p^Hu:=\operatorname{div}(H^{p-1}(\nabla u)\nabla H(\nabla u)),
\end{equation}
where $\Delta_p^H$ is the so-called anisotropic p-Laplacian or Finsler p-Laplacian. 
We point out that $H$ is a Finsler type norm and $H^\circ$ is its the dual norm ($H$ satisfies assumptions $(h_H)$, see Section \ref{notations} for further details).  In particular, when $H(\xi)=|\xi|=H^\circ(\xi)$ the Finsler type p-Laplacian coincides with the classical p-Laplacian, and, hence it is singular when $1<p<2$ and degenerate when $p>2$. Then, according with standard regularity theory \cite{DB,Tolk} and the regularity results in the anisotropic framework \cite{ACF,CSR}, we say that any  solution of \eqref{eq:ProbDoublyCritical} has to be understood in the weak distributional meaning, i.e. $u \in D^{1,p}(\R^N)$ satisfies the following integral equality
\begin{equation}\label{eq:weakDoublyCritical}
	\int_{\R^N}\left(H^{p-1}(\nabla u) \langle \nabla H(\nabla u) , \nabla \varphi \rangle - \frac{\gamma}{H^\circ(x)^p} u^{p-1} \varphi\right) \, dx= \int_{\R^N} u^{p^*-1} \varphi \, dx \quad \forall \varphi \in C^\infty_c(\R^N). 
\end{equation}

The literature about critical problems is really huge. Going back to the Euclidean framework, i.e. when we consider $H(\xi)=|\xi|=H^\circ(\xi)$ in \eqref{eq:ProbDoublyCritical}, we deal with
\begin{equation}\label{eq:doublicritical}
	-\Delta_p u - \frac{\gamma}{|x|^p} u^{p-1} = u^{p^*-1} \qquad \text{in } \R^N.
\end{equation}
In the seminal paper \cite{CGS}, Caffarelli, Gidas and Spruck classified any positive solution to \eqref{eq:doublicritical} with $p=2$, $N \geq 3$, and $\gamma=0$. We point out that a first result, under stronger assumption on the decay of solutions, was obtained by Gidas, Ni and Nirenberg in \cite{GNN81}. Moreover, in this setting a complete answer in the subcritical case was done in the celebrasted work of Gidas and Spruck \cite{GS}, where the authors proved Liouville-type theorems.  

In the quasilinear framework, the situation is much more involved due to the nonlinear nature of the operator. Recently, a classification result of positive solutions to \eqref{eq:doublicritical} with $p > 2$, $\gamma=0$ and $u \in \mathcal{D}^{1,p}(\R^N):=\{u \in L^{p^*}(\R^N) \ | \ \nabla u \in L^p(\R^N)\}$ was obtained in \cite{Sciu16}. The proof of this result is based on a refined version of the well-known moving plane method of Alexandrov-Serrin \cite{A, serrinMov} and on some a priori estimates of the solutions and their gradients, proved in \cite{Vet}. To be more precise, we note that the classification result of positive solution to the Sobolev critical quasilinear equation with finite energy started in \cite{DMMS} in the case, and then was extended in \cite{Vet} for every $1<p<2$. Subsequently the full case was obtained in \cite{Sciu16}. Recently, we refer to the papers \cite{Cat, Ou, Vet2} for new partial results on the classification of positive solutions without a priori assumption on the energy of solutions. In the anisotropic setting, Ciraolo, Figalli and Roncoroni \cite{CFR}, obtained a complete classification result for positive solution to \eqref{eq:ProbDoublyCritical} with $\gamma=0$ using different techniques that do not require the use of the moving plane method, which could not be used in the anisotropic context due to the lack of invariance.

When $\gamma \neq 0$ the situation is really different. In the seminal paper of Terracini \cite{terracini}, it was proved for the first time the classification result for positive solutions to \eqref{eq:doublicritical} in the case $p=2$. The author firstly showed the existence of solutions to this problem with a minimization argument based on the concentration and compactness principle. Subsequently, she proved that any solution to this problem is radial and radially decreasing about the origin combining the moving-plane technique and the use of the Kelvin transformation, in the same spirit of \cite{CGS}. The case $p \neq 2$ and $\gamma \neq 0$ is much more involved and it is available in \cite{OSV}, where the techniques used are mainly based on a fine asymptotic analysis at infinity and refined versions of the moving plane procedure, and also on some asymptotic estimates proved in \cite{Xiang, XiangBis}.

Our aim is to prove some decay estimates for positive weak solutions to \eqref{eq:ProbDoublyCritical} in the anisotropic framework  $1<p<N$ and $\gamma\neq 0$. More precisely, our first main result is the following:

\begin{thm} \label{thm:asymptEst}
	Let $u \in \mathcal{D}^{1,p}(\R^N)$ be a weak solution of \eqref{eq:ProbDoublyCritical}
    with $1<p<N$, $0 \leq \gamma < C_H$. Then there exist positive constants $0 < R_1< 1 < R_2$ depending on $N, p, \gamma$ and  $u$, such that
    \begin{equation}\label{eq:estat0}
    	\frac{c_1}{[H^\circ(x)]^{\mu_1}}   \leq u(x) \leq \frac{C_1}{[H^\circ(x)]^{\mu_1}}  \qquad x \in \mathcal{B}_{R_1}^{H^\circ},
    \end{equation}
	and
	\begin{equation}\label{eq:estatInf}
		\frac{c_2}{[H^\circ(x)]^{\mu_2}}  \leq u(x) \leq \frac{C_2}{[H^\circ(x)]^{\mu_2}} \qquad x \in (\mathcal{B}_{R_2}^{H^\circ})^c,
	\end{equation}
where $\mu_1$, $\mu_2$ are the solutions of
\begin{equation}\label{eq:algebraicHardy}
    \mu^{p-2} [(p-1)\mu^2-(N-p)\mu] + \gamma=0,
\end{equation}
$C_1, C_2$ are positive constants depending on $N,p, \gamma, H$ and $u$, $c_1$ is a positive constant depending on $N,p, \gamma, H, R_1, \mu_1$ and $u$, $c_2$ is a positive constant depending on $N,p, \gamma, H, R_2, \mu_2$ and $u$.
\end{thm}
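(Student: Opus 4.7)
The backbone of the argument is to use explicit radial-in-$H^\circ$ barriers of the form $v_\mu(x) := [H^\circ(x)]^{-\mu}$. Exploiting the $1$-homogeneity of $H^\circ$, the duality identity $\nabla H(\nabla H^\circ(x)) = x/H^\circ(x)$, and Euler's relation $\langle \nabla H(\xi),\xi\rangle = H(\xi)$, a direct computation (analogous to the Euclidean case, since the level sets of $H^\circ$ make the operator reduce to a one-variable expression in $r = H^\circ(x)$) yields
\begin{equation*}
-\Delta_p^H v_\mu \;=\; \mu^{p-1}\bigl[(N-p)-(p-1)\mu\bigr]\,[H^\circ(x)]^{-(p-1)\mu-p}.
\end{equation*}
Comparing with the Hardy term shows that $v_\mu$ solves $-\Delta_p^H v - \gamma[H^\circ(x)]^{-p}v^{p-1}=0$ precisely when $\mu$ is a root of \eqref{eq:algebraicHardy}; since $\gamma<C_H$, this gives two positive exponents $\mu_1<(N-p)/p<\mu_2$.

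For the asymptotics at infinity, I would first establish a preliminary qualitative decay $u(x)\to 0$ as $H^\circ(x)\to\infty$ by a Moser iteration on the critical equation, starting from $u\in L^{p^*}(\R^N)$, in the spirit of \cite{Xiang,XiangBis}. This makes $u^{p^*-p}$ arbitrarily small outside a large Finsler ball, so that in $(\mathcal{B}_{R_2}^{H^\circ})^c$ the critical nonlinearity becomes a lower-order perturbation of $\gamma[H^\circ(x)]^{-p}u^{p-1}$. Choosing $C_2$ large enough that $C_2 v_{\mu_2}\geq u$ on $\partial\mathcal{B}_{R_2}^{H^\circ}$, the function $C_2 v_{\mu_2}$ is a supersolution of \eqref{eq:ProbDoublyCritical} in the exterior annulus $\{R_2<H^\circ(x)<R\}$. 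A weak comparison principle for the operator $-\Delta_p^H-\gamma[H^\circ]^{-p}(\cdot)^{p-1}$, which is well posed thanks to the anisotropic Hardy inequality ($\gamma<C_H$), together with $u,v_{\mu_2}\to 0$ at infinity, gives \eqref{eq:estatInf} upper bound upon sending $R\to\infty$. The matching lower bound is obtained analogously with $c_2 v_{\mu_2}$ as a subsolution (the critical term helps) and a strong minimum principle / Harnack inequality to separate $u$ from $0$ at the inner boundary.

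For the behaviour at the origin, the same ansatz $C_1 v_{\mu_1}$, $c_1 v_{\mu_1}$ is pursued, but $u$ is now allowed to blow up and one must first ensure that the critical term is dominated by the Hardy term near $0$. Using $u\in L^{p^*}(\R^N)$ and a Serrin-type capacity argument, I would show that $[H^\circ(x)]^{(N-p)/p}u(x)=o(1)$ as $H^\circ(x)\to 0$, so that $u^{p^*-1}=o([H^\circ(x)]^{-p}u^{p-1})$. The comparison is then performed on annuli $\{\varepsilon<H^\circ(x)<R_1\}$ with truncations of $(u-C_1 v_{\mu_1})_\pm$ (respectively $(c_1 v_{\mu_1}-u)_+$) as test functions, exploiting the regularity results of \cite{ACF,CSR} and the gradient bounds of \cite{Vet,Xiang,XiangBis} to pass to the limit $\varepsilon\to 0$.

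\textbf{Main obstacle.} The delicate point is justifying the comparison arguments rigorously. Unlike the Euclidean setting of \cite{terracini,OSV}, the anisotropic $p$-Laplacian is not invariant under a Kelvin-type inversion, so the origin and the infinity analyses cannot be reduced to one another and must be carried out independently. Moreover, the combination of a degenerate/singular quasilinear operator, a non-integrable Hardy weight, and possible lack of $C^1$ regularity of $H,H^\circ$ at the coordinate axes means that test functions have to be chosen very carefully, and the sharp form of the anisotropic Hardy inequality must be invoked to absorb the boundary and singular contributions. Producing a self-improving scheme that upgrades an initial polynomial decay (or blow-up) into the precise exponent $\mu_2$ (respectively $\mu_1$), rather than a nearby exponent given by a perturbed algebraic equation, is where the new techniques announced in the abstract are presumably needed.
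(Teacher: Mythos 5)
Your overall plan matches the paper's architecture: (1) an initial decay estimate for $u$ near $0$ and $\infty$ via Moser iteration starting from $u\in L^{p^*}$, combined with Serrin-type local boundedness, giving $u\lesssim [H^\circ(x)]^{-(N-p)/p+\sigma_1}$ near $0$ and $u\lesssim [H^\circ(x)]^{-(N-p)/p-\sigma_2}$ at $\infty$; (2) comparison with explicit anisotropic barriers that are ``radial'' in $H^\circ$; and (3) a weak comparison principle for $-\Delta_p^H - \gamma[H^\circ]^{-p}(\cdot)^{p-1}$ in bounded and exterior domains, well posed thanks to $\gamma<C_H$. Your computation that $[H^\circ(x)]^{-\mu}$ solves the homogeneous Hardy equation iff $\mu$ satisfies \eqref{eq:algebraicHardy} is correct, and your lower-bound argument (comparing $u$, a supersolution of the homogeneous equation, with $c_i [H^\circ]^{-\mu_i}$, which is a subsolution) is exactly what the paper does.

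There is, however, a genuine gap in your upper-bound argument, and you only partially flag it at the end. You claim that $C_2 v_{\mu_2}$ with $v_{\mu_2}=[H^\circ(x)]^{-\mu_2}$ is a supersolution of the full equation $-\Delta_p^H v - \gamma[H^\circ]^{-p}v^{p-1}=v^{p^*-1}$ outside a large Finsler ball. This is false: since $\mu_2$ is a root of \eqref{eq:algebraicHardy}, the pure power is an exact solution of the \emph{homogeneous} equation,
\begin{equation*}
-\Delta_p^H (C_2 v_{\mu_2}) - \frac{\gamma}{[H^\circ(x)]^p}(C_2 v_{\mu_2})^{p-1} = 0 < (C_2 v_{\mu_2})^{p^*-1},
\end{equation*}
so it is actually a strict subsolution, and multiplying by any constant does not help because the left-hand side is $(p-1)$-homogeneous. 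Equivalently, if you try to invoke the comparison principle with $u$ subsolution of $-\Delta_p^H u-\gamma[H^\circ]^{-p}u^{p-1}=f u^{p-1}$, $f=u^{p^*-p}>0$, and $v_{\mu_2}$ supersolution with reaction $g=0$, the required inequality $f\le g$ fails. Replacing $\mu_2$ by $\mu_2-\eta$ does produce a strictly positive reaction, but then you only get a decay exponent $\mu_2-\eta$, and the nearby-exponent issue you mention in your last paragraph becomes the whole problem, not a footnote. The paper resolves this with a surprisingly concrete device (Proposition~\ref{prop:SubSuperComp}): it perturbs the barrier to
\begin{equation*}
v(H^\circ(x))=\frac{1-\delta[H^\circ(x)]^{\varepsilon}}{[H^\circ(x)]^{\mu_1}} \quad\text{near }0, \qquad
v(H^\circ(x))=\frac{1-\delta[H^\circ(x)]^{-\varepsilon}}{[H^\circ(x)]^{\mu_2}} \quad\text{near }\infty.
\end{equation*}
Because $\mu_i$ is a simple root of $h(\mu)=\mu^{p-1}[(N-p)-(p-1)\mu]-\gamma$, the first-order Taylor term $h'(0)>0$ of the associated expansion yields $-\Delta_p^H v - \gamma[H^\circ]^{-p}v^{p-1} = g(x)v^{p-1}$ with $g(x)\ge A[H^\circ(x)]^{-\alpha}$ for suitable $\alpha<p$ (resp.\ $\alpha>p$), which, thanks to the preliminary decay from step (1), dominates $u^{p^*-p}$; the comparison principle then applies with the \emph{correct} inequality $f\le g$. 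Meanwhile the leading behaviour of $v$ remains $[H^\circ(x)]^{-\mu_i}$, so the sharp exponent is achieved. Your proposal should be modified to use these perturbed barriers (or some equivalent device producing a strictly positive reaction at the sharp exponent); as written, the step ``$C_i v_{\mu_i}$ is a supersolution of \eqref{eq:ProbDoublyCritical}'' does not hold.
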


\begin{rem}
	In the following we shall assume that $\mu_1<\mu_2$ and
	it is easy to see that
	$$0 \leq \mu_1 < \frac{N-p}{p} < \mu_2 \leq \frac{N-p}{p-1}.$$
	furthermore $\mathcal{B}_R^{H^\circ}$ is the dual anisotropic ball also known as Frank diagram (see Section \ref{notations} for further details).
\end{rem}

In the proof we will also exploit some clever ideas from  \cite{Xiang} facing the difficulties of the anisotropic issue. A different approach is in fact needed for the study of the asymptotic behaviour of the gradient. In particular, the fact  that the moving plane plane technique cannot be applied, a crucial point is given by the following classification result:

\begin{thm}\label{introtecnicanew}
	Let $v\in C^{1,\alpha}_{loc}(\R^N\setminus \{0\})$ be a positive weak solution of the equation
	\begin{equation}\label{introzioa}
		-\Delta_p^H v - \frac{\gamma}{[H^\circ(x)]^p} v^{p-1} = 0 \  \text{in } \R^N\setminus \{0\},
	\end{equation}
	where $0 \leq \gamma < C_H$. Assume that there exist two positive constants $C$ and $c$ such that 
	\begin{equation}\label{introa_1}
		\frac{c}{[H^{\circ}(x)]^{\mu_i}}\le v(x)\le \frac{C}{[H^{\circ}(x)]^{\mu_i}} \ \forall x\in \R^N\setminus \{0\},
	\end{equation}
	where $\mu_i$ ($i=1,2$) are the roots of \eqref{eq:algebraicHardy} and suppose that there exists a positive constant $\hat C$ such that 
	\begin{equation}\label{introassunzionegradiente}
		|\nabla v(x)|\le \frac{\hat C}{[H^{\circ}(x)]^{\mu_i+1}} \ \ \forall x\in \R^N\setminus \{0\}, 
	\end{equation}
	then 
	\begin{equation}\label{introv_1}
		v(x)=\frac{\overline c}{[H^{\circ}(x)]^{\mu_i}}, \qquad \qquad \qquad 
	\end{equation}
	for some $\bar c>0$.
\end{thm}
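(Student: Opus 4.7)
Set $w(x) := [H^\circ(x)]^{-\mu_i}$. The algebraic equation \eqref{eq:algebraicHardy} is exactly the condition for $w$ to be a positive classical solution of \eqref{introzioa}, as one verifies by a direct computation using the $1$-homogeneity of $H, H^\circ$ and Euler's identity. It thus suffices to show that the ratio $\rho := v/w$ is constant on $\R^N\setminus\{0\}$. By \eqref{introa_1} one has $c\le \rho\le C$, so $\Lambda := \sup\rho$ and $\lambda := \inf\rho$ are finite and positive, and the claim is equivalent to $\Lambda=\lambda$.

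The argument rests on the scale invariance of \eqref{introzioa}: because the operator is $(p-1)$-homogeneous in $v$ and $H, H^\circ$ are $1$-homogeneous, for every $\tau>0$ the rescaling $v_\tau(x):=\tau^{\mu_i}\,v(\tau x)$ is again a positive weak solution of \eqref{introzioa} satisfying \eqref{introa_1} and \eqref{introassunzionegradiente} with the \emph{same} constants, and in particular $\sup(v_\tau/w)=\Lambda$ and $\inf(v_\tau/w)=\lambda$. The pointwise gradient bound \eqref{introassunzionegradiente}, combined with the $C^{1,\alpha}_{loc}$ regularity theory for anisotropic quasilinear equations (\cite{ACF, CSR, DB, Tolk}), then makes the family $\{v_\tau\}_{\tau>0}$ precompact in $C^{1}_{loc}(\R^N\setminus\{0\})$.

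The core step is a strong comparison principle at interior touching points: \emph{if $v/w$ attains $\Lambda$ at some $x_0\in\R^N\setminus\{0\}$, then $v\equiv\Lambda w$.} Indeed, $\Lambda w-v\ge 0$ is a $C^{1,\alpha}$ function vanishing at $x_0$; subtracting the weak formulations of \eqref{introzioa} for $v$ and $\Lambda w$, using the monotonicity of the vector field $\xi\mapsto H^{p-1}(\xi)\nabla H(\xi)$ (equivalent to the strict convexity of $\xi\mapsto H^p(\xi)/p$), the nonvanishing of $\nabla w$ away from the origin -- so the linearized principal operator is uniformly elliptic near $x_0$ -- and the boundedness of the Hardy coefficient on compact subsets of $\R^N\setminus\{0\}$ together with $\gamma<C_H$, a strong comparison/maximum principle argument in the spirit of \cite{Sciu16} adapted to the anisotropic setting forces $\Lambda w\equiv v$ in a neighborhood of $x_0$ and hence on all of $\R^N\setminus\{0\}$ by connectedness. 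If $\Lambda$ is not attained, pick $x_n$ with $\rho(x_n)\to\Lambda$: up to a subsequence $\tau_n:=H^\circ(x_n)\to 0$ or $+\infty$, and $\xi_n:=x_n/\tau_n\in\partial\mathcal{B}_1^{H^\circ}$ converges by compactness to some $\xi_\infty$. A $C^{1}_{loc}$-subsequential limit $\tilde v$ of $v_{\tau_n}$ is then a positive solution of \eqref{introzioa} with the same bounds, $\sup\tilde v/w\le\Lambda$ and $(\tilde v/w)(\xi_\infty)=\lim_n\rho(x_n)=\Lambda$, so $\tilde v/w$ attains its supremum at the interior point $\xi_\infty$; the comparison step applied to $\tilde v$ gives $\tilde v\equiv\Lambda w$. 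Running the parallel analysis at $\lambda=\inf\rho$ -- either through direct interior attainment, or through a blow-up at the opposite end -- and reconciling the two resulting asymptotic profiles at $H^\circ\to 0$ and $H^\circ\to+\infty$ via the bounds \eqref{introa_1}, \eqref{introassunzionegradiente} forces $\Lambda=\lambda$ and completes the proof.

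The main obstacle is precisely the strong comparison principle: the combination of the anisotropic quasilinear principal part (for which moving-plane reductions are unavailable, as noted in the introduction) with the singular Hardy-type lower-order term requires adapting the linearization and Harnack-type arguments of \cite{Sciu16, DMMS} to the Finsler framework and verifying that the strong maximum principle for the linearized operator survives the subcritical Hardy coefficient at interior touching points bounded away from $0$. A more quiet but genuine difficulty is the passage from the blow-up rigidity $\tilde v\equiv\Lambda w$ to the global rigidity $v\equiv\Lambda w$, which critically exploits the gradient bound \eqref{introassunzionegradiente} (to upgrade sub-sequential compactness to matching profiles at both ends) and the scale invariance of the equation.
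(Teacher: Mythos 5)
Your proposal correctly identifies $w(x)=[H^\circ(x)]^{-\mu_i}$ as an exact solution, sets up the rescalings $v_\tau(x)=\tau^{\mu_i}v(\tau x)$ with the right invariance, and uses $C^{1,\alpha}_{loc}$ compactness to extract blow-up limits that attain $\sup(v/w)$ interiorly; the subsequent strong-comparison rigidity for the \emph{blow-up limit} is also the same ingredient the paper uses (via Damascelli's strong comparison principle). However, there is a genuine gap in the last step. Showing that a blow-up limit $\tilde v$ of $v_{\tau_n}$ satisfies $\tilde v\equiv\Lambda w$ only says $v_{\tau_n}\to\Lambda w$ in $C^1_{loc}$ along that particular sequence: it does not propagate back to the original $v$. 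Your concluding sentence — ``reconciling the two asymptotic profiles at $H^\circ\to 0$ and $H^\circ\to+\infty$ forces $\Lambda=\lambda$'' — is exactly the point where the argument is missing machinery. A priori, $v/w$ could approach its supremum along one sequence of scales and its infimum along another (even both tending to the same end), producing two different blow-up limits $\Lambda w$ and $\lambda w$ with no contradiction; nothing in your proposal rules this out.

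The paper closes this gap differently, and this is the substantive content of the proof. After obtaining the blow-up rigidity $w_\infty\equiv \overline c\,[H^\circ]^{-\mu_i}$ (with $\overline c$ the $\limsup$ of $[H^\circ]^{\mu_i}v$ at one end), it sets $\hat v=\overline c\,[H^\circ]^{-\mu_i}$ and runs a \emph{global integral comparison}: one tests the equation with the truncated log-type test functions
$\varphi_1=\varphi_R\,\min\{(v^p-\hat v^p)^+,m\}\,v^{1-p}$ and $\varphi_2=\varphi_R\,\min\{(v^p-\hat v^p)^+,m\}\,\hat v^{1-p}$
on the annular domain bounded by $\partial\mathcal B_{R_n}^{H^\circ}$, so that a boundary term appears. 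The blow-up rigidity is used precisely to show this boundary term is $O(R_n^{N-p-\epsilon})$ (choosing $m=R_n^{-\epsilon}$) — i.e.\ the rescaled gradients of $v$ and $\hat v$ agree in the limit near the boundary — while the interior bulk term, via Proposition~\ref{prop:Comparison1}, is the nonnegative quantity $\int(v^p+\hat v^p)H^p(\nabla\ln v-\nabla\ln\hat v)$. Passing to the limit yields $v\le\hat v$, and the reverse inequality is obtained by exchanging roles (here the gradient assumption \eqref{introassunzionegradiente} is needed to control the analogue of \eqref{ap}). It is this integral step, not a ``reconciliation of profiles,'' that transfers the rigidity from the rescaled limit back to $v$ itself. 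Without it, the argument is incomplete.
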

Theorem \ref{introtecnicanew} is new and interesting in itself. The proof is very much different than the ones available in the euclidean case $H(\xi)=|\xi|$. Here we shall exploit it to deduce 
the precise asymptotic estimates for the gradient. more precisely we have the following:

\begin{thm}\label{stimagradiente}
    Let $u \in \mathcal{D}^{1,p}(\R^N)$ be a weak solution of \eqref{eq:ProbDoublyCritical}
    with $1<p<N$, and $0 \leq \gamma < C_H$. Then there exist positive constants $\tilde c$, $\tilde C$ depending on $N,p, \gamma, H$ and $u$ such that
    \begin{equation}\label{eq:estat0grad}
    	\frac{\tilde c}{[H^\circ(x)]^{\mu_1+1}}   \leq |\nabla u(x)| \leq \frac{\tilde  C}{[H^\circ(x)]^{\mu_1+1}}  \qquad x \in \mathcal{B}_{R_1}^{H^\circ},
    \end{equation}
	and
	\begin{equation}\label{eq:estatInfgrad}
		\frac{\tilde c}{[H^\circ(x)]^{\mu_2+1}}  \leq |\nabla u(x)| \leq \frac{\tilde C}{[H^\circ(x)]^{\mu_2+1}} \qquad x \in (\mathcal{B}_{R_2}^{H^\circ})^c,
	\end{equation}
where $\mu_1, \mu_2$ are roots of \eqref{eq:algebraicHardy} as in Theorem \ref{thm:asymptEst}, and $0 < R_1< 1 < R_2$ are constants depending on $N, p, \gamma$ and  $u$.
\end{thm}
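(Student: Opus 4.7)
The plan is to combine a blow-up rescaling argument with the Liouville-type classification in Theorem \ref{introtecnicanew} and the interior $C^{1,\alpha}$ regularity theory for the Finsler $p$-Laplacian from \cite{ACF,CSR}. The upper bounds in \eqref{eq:estat0grad}--\eqref{eq:estatInfgrad} come out directly from the rescaling together with the scale-invariant $C^{1,\alpha}$ estimates, whereas the lower bounds are proved by a contradiction argument in which the unique blow-up profile is identified via Theorem \ref{introtecnicanew}. I detail the argument near the origin; the case at infinity is completely analogous.

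\textbf{Upper bound near $0$.} Given small $\lambda>0$, set $v_\lambda(x):=\lambda^{\mu_1}u(\lambda x)$. Using the $p$-homogeneity of $\Delta_p^H$ (the map $\xi\mapsto H^{p-1}(\xi)\nabla H(\xi)$ is $(p-1)$-homogeneous) and the $1$-homogeneity of $H^{\circ}$, a direct computation shows that $v_\lambda$ solves
\[
-\Delta_p^H v_\lambda-\frac{\gamma}{[H^{\circ}(x)]^p}v_\lambda^{p-1}=\lambda^{\sigma}\,v_\lambda^{p^*-1}\quad\text{in }\R^N\setminus\{0\},\qquad \sigma:=p-\mu_1\,\frac{p^2}{N-p}>0,
\]
where the positivity of $\sigma$ follows from $\mu_1<(N-p)/p$. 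By Theorem \ref{thm:asymptEst}, the family $\{v_\lambda\}_{\lambda\in(0,R_1)}$ is uniformly bounded from above and below by $C_1[H^{\circ}(x)]^{-\mu_1}$ and $c_1[H^{\circ}(x)]^{-\mu_1}$ on every fixed annulus $A\Subset\R^N\setminus\{0\}$. Since both the Hardy weight and the right-hand side are then uniformly bounded on $A$, the interior $C^{1,\alpha}$ estimates of \cite{ACF,CSR} yield $\|\nabla v_\lambda\|_{L^{\infty}(A')}\le K$ on any smaller annulus $A'\Subset A$, uniformly in $\lambda$. Returning to $u$ via the scaling identity $|\nabla u(y)|=\lambda^{-(\mu_1+1)}|\nabla v_\lambda(y/\lambda)|$ with $\lambda=H^{\circ}(y)$ proves the upper bound in \eqref{eq:estat0grad}.

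\textbf{Lower bound near $0$.} Arguing by contradiction, suppose there is a sequence $x_n\to 0$ with $[H^{\circ}(x_n)]^{\mu_1+1}|\nabla u(x_n)|\to 0$. Set $\lambda_n:=H^{\circ}(x_n)$, $y_n:=x_n/\lambda_n\in\{H^{\circ}=1\}$ and $v_n:=v_{\lambda_n}$. By the uniform $C^{1,\alpha}_{loc}$ bound just established, up to a subsequence $v_n\to v$ in $C^{1}_{loc}(\R^N\setminus\{0\})$ and $y_n\to y_0$ with $H^{\circ}(y_0)=1$. The limit $v\in C^{1,\alpha}_{loc}(\R^N\setminus\{0\})$ inherits the two-sided bound \eqref{introa_1} with $i=1$ from Theorem \ref{thm:asymptEst} and, using the already proved upper gradient bound, also the pointwise gradient control \eqref{introassunzionegradiente}. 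Since $\lambda_n^{\sigma}\to 0$ and $\xi\mapsto H^{p-1}(\xi)\nabla H(\xi)$ is continuous on $\R^N\setminus\{0\}$, one can pass to the limit in the weak formulation of the rescaled PDE to conclude that $v$ solves \eqref{introzioa}. Theorem \ref{introtecnicanew} then forces
\[
v(x)=\frac{\bar c}{[H^{\circ}(x)]^{\mu_1}}\qquad\text{for some } \bar c>0;
\]
in the Hardy regime $\mu_1>0$ and $\nabla H^{\circ}\ne 0$ on $\{H^{\circ}=1\}$ by $(h_H)$, so $\nabla v(y_0)\ne 0$. This contradicts
\[
|\nabla v(y_0)|=\lim_{n\to\infty}|\nabla v_n(y_n)|=\lim_{n\to\infty}\lambda_n^{\mu_1+1}|\nabla u(x_n)|=0.
\]

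\textbf{Infinity and main obstacle.} The estimate \eqref{eq:estatInfgrad} follows by the same scheme using $v_\lambda(x):=\lambda^{\mu_2}u(\lambda x)$ and $\lambda\to\infty$: since $\mu_2>(N-p)/p$, the corresponding exponent $\sigma=p-\mu_2\,p^2/(N-p)$ is now negative, so $\lambda^{\sigma}\to 0$ in the blow-down limit and Theorem \ref{introtecnicanew} with $i=2$ identifies the unique profile. The genuine difficulty of the whole proof is precisely this blow-up step: one needs uniform $C^{1,\alpha}$ estimates for the rescaled family and stability of the anisotropic Hardy operator under $C^{1}_{loc}$ convergence in order to pass to the limit and apply Theorem \ref{introtecnicanew}. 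These two ingredients---supplied respectively by \cite{ACF,CSR} on compact subsets of $\R^N\setminus\{0\}$ and by the smoothness of $H$ away from the origin---combined with the Liouville-type classification of Theorem \ref{introtecnicanew}, constitute the core of the argument.
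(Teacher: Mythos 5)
Your proof is correct and follows essentially the same strategy as the paper's: a blow-up rescaling argument, scale-invariant interior $C^{1,\alpha}$ estimates from \cite{ACF,CSR,L,L2} applied to the rescaled family on annuli, and a contradiction argument for the lower bound where the blow-up limit is forced to be the explicit profile $\bar{c}[H^\circ(x)]^{-\mu_i}$ by Theorem~\ref{introtecnicanew}, hence has no critical points. The paper writes out the case at infinity (with $\mu_2$) and declares the case near $0$ analogous, whereas you do the converse; the only substantive point worth flagging, which you correctly noticed but should state explicitly, is that the gradient lower bound near the origin in \eqref{eq:estat0grad} is vacuous/false when $\gamma=0$ (then $\mu_1=0$ and the blow-up profile is constant), so that half of the contradiction argument genuinely requires $\mu_1>0$, i.e. $\gamma>0$.
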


The paper is structured as follows:

\begin{itemize}
	\item In Section \ref{notations} we recall some notions about Finsler type anisotropic  geometry, and we prove some technical lemmas that will be crucial in the proof of the main results.

	\item In Section \ref{preliminaryEst} we prove some preliminary estimates, elliptic estimates and weak comparison principles in bounded and exterior domains that will be essential in the proof of Theorem \ref{thm:asymptEst}.

	\item In Section \ref{AsympEst} we give the proof of decay estimates of solutions  to \eqref{eq:ProbDoublyCritical} near the origin and at infinity, i.e. we prove Theorem \ref{thm:asymptEst}. The, using this result we also prove decay estimates for the gradient of positive weak solutions to \eqref{eq:ProbDoublyCritical} near the origin and at infinity, i.e. we prove Theorem \ref{stimagradiente}. 

     \item Although the existence of solutions can be easy deduced in the radial-anisotropic setting, in the Appendix \ref{appendix}  we show that problem \eqref{eq:ProbDoublyCritical} admits at least a positive solution $u \in D^{1,p}(\R^N)$ that minimizes the Hardy-Sobolev anisotropic inequality. This result follows using classical arguments (see also \cite{terracini}) that we decide to add for the readers' convenience.
    
\end{itemize}

\section{Preliminaries}\label{notations}
\noindent {\bf Notation.} Generic fixed and numerical constants will
be denoted by $C$ (with subscript in some case) and they will be
allowed to vary within a single line or formula. By $|A|$ we will
denote the Lebesgue measure of a measurable set $A$.\\

The aim of this section is to recall some properties and geometrical tools about the anisotropic elliptic operator defined above. For $a, b \in \R^N$ we denote by $a \otimes b$ the matrix whose entries are $(a \otimes b)_{ij}=a_ib_j$. We remark that for any $v,w \in \R^N$ it holds that:
$$\langle a \otimes b \ v, w \rangle = \langle b, v \rangle \langle a , w \rangle.$$

Now, we recall the definition of  anisotropic norm.

\begin{itemize}
	\item[$(h_H)$]
Let $H \in C^2(\R^N \setminus \{0\})$. In all the paper we assume that $H$ is a anisotropic norm if it satisfies the following set of assumptions:

\begin{enumerate}
    \item[(i)] $H(\xi)>0 \quad \forall \xi \in \R^N \setminus \{0\}$;

    \item[(ii)] $H(s \xi) = |s| H(\xi) \quad \forall \xi \in \R^N \setminus \{0\}, \, \forall s \in \R$;

    \item[(iii)] $H$ is \emph{uniformly elliptic}, that means the set $\mathcal{B}_1^H:=\{\xi \in \R^N  :  H(\xi) < 1\}$ is \emph{uniformly convex}
    \begin{equation}
        \exists \Lambda > 0: \quad \langle D^2H(\xi)v, v \rangle \geq \Lambda |v|^2 \quad \forall \xi \in \partial \mathcal{B}_1^H, \; \forall v \in \nabla H(\xi)^\bot.
    \end{equation}
\end{enumerate}
\end{itemize}

A set is said uniformly convex if the principal curvatures of its boundary are all strictly positive.
Moreover, assumption (iii) is equivalent to assume that $D^2 (H^2)$ is definite positive.

	The {\em dual norm} $H^\circ : \R^N \rightarrow [0,+\infty)$ is defined as:
	$$H^\circ (x) = \sup_{H(\xi) \leq 1} \langle \xi , x \rangle.$$

It is possible to show that $H^\circ$ is also a Finsler norm and it has the same regularity properties of $H$. Moreover, it holds $(H^\circ)^\circ = H$. For $R > 0$ and $\bar x \in \R^N$ we define:
$$\mathcal{B}_R^H(\bar x) = \{ x \in \R^N \ : \ H(x-\bar x) < R\}$$
and 
$$\mathcal{B}_R^{H^\circ} (\bar x) = \{ x \in \R^N \ : \ H^\circ(x-\bar x) < R\}.$$

For simplicity of exposition, when $\bar x = 0$, we set: $\mathcal{B}_R^H=\mathcal{B}_R^H(0)$, $\mathcal{B}_R^{H^\circ}=\mathcal{B}_R^{H^\circ}(0)$. In literature  $\mathcal{B}_R^H$ and  $\mathcal{B}_R^{H^\circ}$ are also called ``Wulff shape" and ``Frank diagram" respectively. We remark that there holds the following identities:
\begin{equation}\label{eq:PropFinsler}
H(\nabla H ^\circ (x)) = 1 = H^\circ (\nabla H (x));
\end{equation}
and 
\begin{equation}\label{eq:PropFinsler2}
    H(x)\nabla H^\circ(\nabla H(x))=x=H^\circ(x)\nabla H(\nabla H^\circ (x)).
\end{equation}
We refer the reader to \cite{BellPao, CianSal} for further details. 
Observe also that $H$ is a norm equivalent to the euclidean one, i.e.  there exist $\alpha_1, \alpha_2 > 0$ such that:
\begin{equation}\label{eq:equivalentNorm}
	\alpha_1 |\xi| \leq H(\xi) \leq \alpha_2 |\xi| \qquad \forall \xi \in \R^N.
\end{equation}
Moreover, recalling that $H$ is $1$-homogeneous, by the Euler's Theorem it follows
\begin{equation}\label{eq:EulerThm}
	\langle \nabla H(\xi), \xi \rangle = H(\xi) \qquad \forall \xi \in \R^N \setminus \{0\}.
\end{equation}
Since $H$ is $1$-homogeneous, we have that $\nabla H$ is $0$-homogeneous and it satisfies
$$\nabla H (\xi) = \nabla H \left(|\xi|\frac{\xi}{|\xi|}\right)= \nabla H \left(\frac{\xi}{|\xi|}\right) \qquad \forall \xi \in \R^N\setminus \{0\}.$$
Hence, by the previous equality, we infer that there exists $M>0$ such that
\begin{equation}\label{eq:BddH}
	| \nabla H(\xi)| \leq M \qquad \forall \xi \in \R^N\setminus \{0\}.
\end{equation}
For the same reasons there exists a constant $\overline M>0$ such that:
\begin{equation}\label{eq:PropFinsler11}
|D^2H(\xi)|\le \frac{\overline M}{|\xi|}\qquad \forall\,\xi\in\R^N\setminus\{0\}, 
\end{equation}
where $|\cdot|$ denotes the usual Euclidean  norm of a matrix, and
\begin{equation}\label{eq:PropFinsler22}
    D^2H(\xi)\xi=0 \qquad \forall\,\xi\in\R^N\setminus\{0\}.
\end{equation}

 We start with some elliptic estimates that can be proved in the same spirit of the Euclidean framework.

\begin{prop} \label{prop:Lindqvist} 
For any $p>1$ and  $\eta, \eta' \in \R^N$ such that $|\eta|+|\eta'|>0$, it holds 
\begin{equation}\label{zaia}
    |H^{p-1}(\eta)\nabla H(\eta)-H^{p-1}(\eta ')\nabla H(\eta ')|\le \tilde C_p(|\eta|+|\eta'|)^{p-2}|\eta-\eta'|.
\end{equation}

Moreover, any $p \geq 2$ it holds the following inequality
\begin{equation} \label{introeq:ineqStandard}
	H^p(\eta) \geq H^p(\eta') + p H^{p-1}(\eta') \langle \nabla H(\eta'), \eta -\eta' \rangle + \hat C(p) H^p(\eta - \eta'), 
\end{equation}
for any $\eta, \eta' \in \R^N$. Furthermore, if $1<p<2$ we have that
\begin{equation} \label{intro:eq:ineqStandardBis}
	H^p(\eta) \geq H^p(\eta') + p H^{p-1}(\eta') \langle \nabla H (\eta'), \eta-\eta' \rangle + C_p [H(\eta) + H(\eta')]^{p-2}H^2(\eta - \eta'),
\end{equation}
for any $\eta, \eta' \in \R^N$ such that $|\eta|+|\eta'|>0$.
\end{prop}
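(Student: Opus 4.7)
The plan is to view all three estimates as properties of the smooth convex potential $F(\xi) := H^p(\xi)/p$, whose gradient is $\nabla F(\xi) = H^{p-1}(\xi)\nabla H(\xi)$ and whose Hessian, by direct differentiation, reads
\[
D^2 F(\xi) = (p-1)\,H^{p-2}(\xi)\,\nabla H(\xi)\otimes\nabla H(\xi) + H^{p-1}(\xi)\,D^2 H(\xi).
\]
The first step is to establish pointwise estimates on $D^2 F$. The upper bound $|D^2 F(\xi)|\le C_p\, H^{p-2}(\xi)$ is immediate from \eqref{eq:BddH} and \eqref{eq:PropFinsler11}. For the lower bound, using the identity $D^2(H^2/2) = \nabla H\otimes\nabla H + H\, D^2 H$ one rewrites
\[
D^2 F(\xi) = H^{p-2}(\xi)\bigl[D^2(H^2/2)(\xi) + (p-2)\,\nabla H(\xi)\otimes\nabla H(\xi)\bigr];
\]
by $(h_H)(iii)$ and the $0$-homogeneity of $D^2(H^2)$, the matrix $D^2(H^2/2)$ is uniformly positive definite on $\R^N\setminus\{0\}$, which for $p\ge 2$ directly gives $\langle D^2 F(\xi) v, v\rangle \ge c_p\, H^{p-2}(\xi)\,|v|^2$. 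For $1<p<2$ the same form of lower bound follows by decomposing $v$ into components parallel and perpendicular to $\nabla H(\xi)$, combined with Euler's identity \eqref{eq:EulerThm} and \eqref{eq:equivalentNorm} to bound $|\nabla H|$ away from both $0$ and infinity.

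For \eqref{zaia}, I would apply the fundamental theorem of calculus along the segment $\zeta(s) := \eta' + s(\eta-\eta')$,
\[
\nabla F(\eta) - \nabla F(\eta') = \int_0^1 D^2 F(\zeta(s))\,(\eta-\eta')\,ds,
\]
take absolute values and invoke the upper Hessian bound so that the proof reduces to the scalar estimate $\int_0^1 H^{p-2}(\zeta(s))\,ds \le C\,(|\eta|+|\eta'|)^{p-2}$. For $p\ge 2$ this is immediate from \eqref{eq:equivalentNorm}. For $1<p<2$ the integrand may blow up when $\zeta(s_0)=0$ for some $s_0$, but the singularity is integrable since $p-2>-1$; the delicate unbalanced regime (with $\eta$ and $\eta'$ almost antiparallel) is treated via the decomposition
\[
\nabla F(\eta)-\nabla F(\eta') = H^{p-1}(\eta)\bigl[\nabla H(\eta)-\nabla H(\eta')\bigr] + \bigl[H^{p-1}(\eta)-H^{p-1}(\eta')\bigr]\,\nabla H(\eta'),
\]
exploiting the $0$-homogeneity of $\nabla H$ and the concavity of $t\mapsto t^{p-1}$ on $(0,\infty)$.

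For \eqref{introeq:ineqStandard} and \eqref{intro:eq:ineqStandardBis} I would instead apply Taylor's formula with integral remainder to $F$,
\[
F(\eta) - F(\eta') - \langle\nabla F(\eta'), \eta-\eta'\rangle = \int_0^1 (1-s)\,\langle D^2 F(\zeta(s))(\eta-\eta'), \eta-\eta'\rangle\,ds,
\]
and then use the lower Hessian bound to obtain a lower bound of the form $c_p\,|\eta-\eta'|^2 \int_0^1 (1-s)\,H^{p-2}(\zeta(s))\,ds$. For $1<p<2$ the inequality $H(\zeta(s)) \le H(\eta)+H(\eta')$ together with $p-2<0$ immediately gives $H^{p-2}(\zeta(s)) \ge c\,[H(\eta)+H(\eta')]^{p-2}$ a.e., producing \eqref{intro:eq:ineqStandardBis}. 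For $p\ge 2$, the triangle inequality $\max\{H(\eta),H(\eta')\}\ge H(\eta-\eta')/2$ implies by continuity that $H(\zeta(s))\ge c\,H(\eta-\eta')$ on a sub-interval of $[0,1]$ whose length is bounded below uniformly in $\eta,\eta'$, yielding $\int_0^1 (1-s)\,H^{p-2}(\zeta(s))\,ds\ge c\,H^{p-2}(\eta-\eta')$ and hence \eqref{introeq:ineqStandard}.

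The main technical obstacle is the anisotropic refinement of the case $1<p<2$ in \eqref{zaia}: the Euclidean proof uses rotational invariance, which is not available here, so the regime $H(\eta)\gg H(\eta')$ has to be handled through the above decomposition and the quantitative convexity of $H$. Once the Hessian bounds on $F=H^p/p$ are in place, all remaining computations reduce to routine scalar calculus along the segment $\zeta$.
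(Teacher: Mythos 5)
Your proposal follows essentially the same strategy as the paper's proof: differentiate along the segment $\eta'+t(\eta-\eta')$, use pointwise upper bounds on $D^2(H^p/p)$ (via \eqref{eq:BddH} and \eqref{eq:PropFinsler11}) for \eqref{zaia} and lower bounds (the paper cites Theorem~1.5 of \cite{CFV} rather than deriving them as you sketch) together with Taylor's formula with integral remainder for \eqref{introeq:ineqStandard} and \eqref{intro:eq:ineqStandardBis}, after which everything reduces to the scalar estimate $\int_0^1|\eta'+t(\eta-\eta')|^{p-2}\,dt \lesssim (|\eta|+|\eta'|)^{p-2}$. The only place you genuinely diverge is the delicate unbalanced regime of \eqref{zaia} for $1<p<2$ (segment near or through the origin): the paper handles it by setting $t_0=|\eta'|/|\eta-\eta'|$ and applying the reverse triangle inequality to bound $|\eta'+t(\eta-\eta')|\gtrsim|t_0-t|\,(|\eta|+|\eta'|)$, whose $(p-2)$-power is still integrable, whereas you propose the algebraic split $\nabla F(\eta)-\nabla F(\eta')=H^{p-1}(\eta)[\nabla H(\eta)-\nabla H(\eta')]+[H^{p-1}(\eta)-H^{p-1}(\eta')]\nabla H(\eta')$ and bound each term using $0$-homogeneity of $\nabla H$ and H\"older continuity of $t\mapsto t^{p-1}$; both are valid, and yours avoids the change of variable, but note that your decomposition only closes because in the regime $|\eta-\eta'|>|\eta'|/2$ (with $|\eta'|\ge|\eta|$) one has $|\eta|+|\eta'|\lesssim|\eta-\eta'|$, which given $p-2<0$ is exactly what converts your $|\eta-\eta'|^{p-1}$ and $|\eta|^{p-1}$ bounds into the claimed $(|\eta|+|\eta'|)^{p-2}|\eta-\eta'|$ — so this normalization step should be made explicit.
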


\begin{proof}
     
We start the proof showing \eqref{zaia}. First of all we note that \eqref{zaia} is symmetric in $\eta$, $\eta'$. Hence, without loss of generality, we can assume that $|\eta'| \geq |\eta| > 0$. We note that for $j=1, \dots ,N$:
\begin{equation}\label{eq:Lagrangethm}
	\begin{split}
	&H^{p-1}(\eta) \frac{\partial H}{\partial \eta_j}(\eta)-H^{p-1}(\eta') \frac{\partial H}{\partial \eta_j}(\eta') = \\
	=& \int_0^1 \sum_{i=1}^N \left[(p-1)H^{p-2}(\eta'+t(\eta-\eta')) \left(\frac{\partial H}{\partial \eta_i}  \cdot \frac{\partial H}{\partial \eta_j}\right) (\eta'+t(\eta-\eta'))   \right.\\
	& \qquad \qquad  \left. + H^{p-1}(\eta'+t(\eta-\eta')) \frac{\partial^2 H}{\partial \eta_i \partial \eta_j}(\eta'+t(\eta-\eta')) \right] \cdot (\eta_i-\eta_i')  \, dt.
	\end{split}
\end{equation}

By \eqref{eq:Lagrangethm}, using \eqref{eq:equivalentNorm}, \eqref{eq:BddH} and \eqref{eq:PropFinsler11} we have
\begin{equation} \label{eq:LagrangeThmMonotonicityBis}
	\begin{split}
		&|H^{p-1}(\eta)\nabla H(\eta)-H^{p-1}(\eta')\nabla H(\eta')| \leq \\
		& \leq \int_0^1 |(p-1)H^{p-2}(\eta'+t(\eta-\eta')) \nabla H(\eta'+t(\eta-\eta')) \otimes \nabla H(\eta'+t(\eta-\eta')) \\
		& \qquad \ \ \ + H^{p-1}(\eta'+t(\eta-\eta')))D^2H(\eta'+t(\eta-\eta'))| \cdot |\eta -\eta'| \, dt\\
		& \leq ((p-1)\alpha_2^{p-2}M^2+\alpha_2^{p-1}\overline M) |\eta - \eta'| \int_0^1 (|\eta'+t(\eta-\eta')|)^{p-2} \, dt,
	\end{split}
\end{equation}
where $|\cdot|$ denotes the standard matrix Euclidean norm.

Now, we observe that 
\begin{equation} \label{eq:triangularinequality}
	 |\eta'+t(\eta-\eta')| \leq  |\eta|+ |\eta'|,
\end{equation}
and, since $|\eta'| \geq |\eta|$, we have either
\begin{equation} \label{eq:reversetriangularinequality}
	|\eta - \eta'| \leq \frac{|\eta'|}{2} \ \ \Longrightarrow \ \ |\eta'+t(\eta-\eta')| \geq |\eta'|-|\eta-\eta'| \geq  \frac{|\eta'|}{2} \geq \frac{|\eta'|+|\eta|}{4}
\end{equation}
or, putting $\displaystyle t_0:= \frac{|\eta'|}{|\eta-\eta'|} \in (0,2)$,
\begin{equation} \label{eq:reversetriangularinequalitybis}
	\begin{split}
     	|\eta - \eta'| > \frac{|\eta'|}{2} \ \ \Longrightarrow \ \   |\eta'+t(\eta-\eta')| & \geq \ \left||\eta'|-t|\eta-\eta'|\right| = |t_0-t| \cdot | \eta -\eta'| \\
     	&  \geq |t_0-t| \cdot \frac{|\eta'|}{2} = |t_0-t|\frac{|\eta'|}{2} \\
     	& \geq |t_0-t| \frac{|\eta| + |\eta'|}{4}. 
	\end{split}
\end{equation}

If $p>2$, using \eqref{eq:triangularinequality} in \eqref{eq:LagrangeThmMonotonicityBis} we have
\begin{equation} \label{eq:LagrangeThmMonotonicity4}
	|H^{p-1}(\eta)\nabla H(\eta)-H^{p-1}(\eta')\nabla H(\eta')| \leq
		 \tilde C_p (|\eta '|+|\eta|)^{p-2}|\eta - \eta'| 
	\end{equation}
where $\tilde C_p=(p-1)\alpha_2^{p-2}M^2+\alpha_2^{p-1}\overline M$. Hence \eqref{zaia} holds.

If $p \leq 2$ and \eqref{eq:reversetriangularinequality} holds, by \eqref{eq:LagrangeThmMonotonicityBis} we obtain 
\begin{equation} \label{eq:LagrangeThmMonotonicityTris}
	\begin{split}
		&|H^{p-1}(\eta)\nabla H(\eta)-H^{p-1}(\eta')\nabla H(\eta')| \leq \\
		& \leq ((p-1)\alpha_2^{p-2}M^2+\alpha_2^{p-1}\overline M) |\eta - \eta'| \int_0^1 (  |\eta'+t(\eta-\eta')|)^{p-2} \, dt.\\
		& \leq ((p-1)\alpha_2^{p-2}M^2+\alpha_2^{p-1}\overline M) |\eta - \eta'| \int_0^1 \left(\frac{|\eta'|+|\eta|}{4}\right)^{p-2} \, dt\\
		& \leq \tilde C_p (|\eta'|+|\eta|)^{p-2}|\eta-\eta'|,
	\end{split}
\end{equation}
where $\tilde C_p=((p-1)\alpha_2^{p-2}M^2+\alpha_2^{p-1}\overline M)/4^{p-2}$. Hence \eqref{zaia} holds \\

If $p \leq 2$ and \eqref{eq:reversetriangularinequalitybis} holds, by \eqref{eq:LagrangeThmMonotonicityBis} we obtain 
\begin{equation} \label{eq:LagrangeThmMonotonicityPoker}
	\begin{split}
		&|H^{p-1}(\eta)\nabla H(\eta)-H^{p-1}(\eta')\nabla H(\eta')| \leq \\
		& \leq  ((p-1)\alpha_2^{p-2}M^2+\alpha_2^{p-1}\overline M)|\eta - \eta'| \int_0^1 ( |\eta'+t(\eta-\eta')|)^{p-2} \, dt.\\
		& \leq ((p-1)\alpha_2^{p-2}M^2+\alpha_2^{p-1}\overline M) |\eta - \eta'| \int_0^1 |t_0-t|^{p-2}\left(\frac{|\eta'|+|\eta|}{4}\right)^{p-2} \, dt\\
		& = \frac{(p-1)\alpha_2^{p-2}M^2+\alpha_2^{p-1}\overline M}{4^{p-2}} ( |\eta'|+|\eta|)^{p-2}|\eta-\eta'| \int_0^1 |t_0-t|^{p-2} \, dt\\
		&\leq \frac{(p-1)\alpha_2^{p-2}M^2+\alpha_2^{p-1}\overline M}{4^{p-2}} (|\eta'|+|\eta|)^{p-2}|\eta-\eta'|  \ 2 \int_0^1  z^{p-2} \, dz\\
		& = \tilde C_p (|\eta'|+|\eta|)^{p-2}|\eta-\eta'|,  
	\end{split}
\end{equation}
 where $\tilde C_p=(2(p-1)\alpha_2^{p-2}M^2+2\alpha_2^{p-1}\overline M)/4^{p-2}$.\\

Collecting the estimates above, we deduce that inequality \eqref{zaia} holds for every $p>1$ and for $\tilde C_p=((p-1)\alpha_2^{p-2}M^2+\alpha_2^{p-1}\overline M) \cdot \max\{1, 4^{2-p}, 2 \cdot 4^{2-p}\}$.

Now we will show \eqref{introeq:ineqStandard} and \eqref{intro:eq:ineqStandardBis}. For $\eta, \eta'\in \R^N$, we define $$f(t)=H^p(\eta'+t(\eta-\eta'))$$ and Taylor's formula yields 
\begin{equation}
  H^p(\eta) = H^p(\eta') + p H^{p-1}(\eta') \langle   \nabla H(\eta'), \eta -\eta' \rangle + \int_0^1 (1-t)f''(t) \,dt ,
\end{equation}
provided $|\eta'+t(\eta-\eta')|\neq 0$, for $0\le t\le 1$. But the case when $\eta'+t(\eta-\eta')=0$ can be easily verified. By \cite[Theorem $1.5$]{CFV} we obtain that 
\begin{equation}
\begin{split}
    f''(t)&=\langle D^2(H^p(\eta'+t(\eta-\eta')))(\eta-\eta'),\eta-\eta'\rangle \\ 
     &\ge C(p)|\eta'+t(\eta-\eta')|^{p-2}|\eta-\eta'|^2,
    \end{split}
\end{equation}
where $C(p)$ is a constant depending on $p$.
We remark that  
\begin{equation}\label{botto}
    \int_0^1(1-t)f''(t) \,dt \ge \frac{3}{4}\int_0^{\frac{1}{4}}f''(t) \,dt.
\end{equation}

If $1<p<2$, by \eqref{eq:triangularinequality} we have $$(H(\eta)+H(\eta '))^{p-2}\le \alpha_2^{p-2}|\eta'+t(\eta-\eta')|^{p-2}$$ and using \eqref{botto} we arrive at \eqref{intro:eq:ineqStandardBis}.

If $p\ge 2$, using a similar argument as in the proof of the inequality \eqref{zaia}, we obtain 
\begin{equation}
   \int_0^{\frac{1}{4}}f''(t)\ge \overline C(p)(|\eta|+|\eta'|)|^{p-2}|\eta-\eta'|^2,
\end{equation}
with $\overline C(p)$ constant depending on $p$. Since $|\eta-\eta'|\le |\eta|+|\eta '|$ and using \eqref{eq:equivalentNorm}, we get \eqref{introeq:ineqStandard}.
\end{proof}

We state now the Hardy inequality for the anisotropic operator $\Delta_p^Hu$, defined in  \eqref{eq:anisotropic}. We refer to \cite[Proposition 7.5]{VanHardy}.
\begin{thm}[Hardy inequality]\label{thm:Hardy}
		For any $H$ satisfying the assumption $(h_H)$ and any $u\in \mathcal{D}^{1,p}(\R^N)$ and $1<p<N$,
		\begin{equation}\label{eq:Hardy}
			C_H\int_{\mathbb R^N}\frac{|u|^p}{{H^\circ(x)^p}}\, dx\leq \int_{\mathbb R^N}H^p(\nabla u)\, dx,
		\end{equation}
		where $C_H=(({N-p})/{p})^p$ is optimal. 
\end{thm}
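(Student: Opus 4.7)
The plan is to establish \eqref{eq:Hardy} first for test functions $u\in C^\infty_c(\R^N\setminus\{0\})$ via a divergence identity, and then to extend it to $\mathcal{D}^{1,p}(\R^N)$ by approximation. The heart of the argument is to realize the Hardy weight $[H^\circ(x)]^{-p}$ as the divergence of a vector field that is naturally adapted to the Finsler structure, and to control the resulting first-order term via the duality between $H$ and $H^\circ$.

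Concretely, first I would introduce the vector field $X(x):=x/[H^\circ(x)]^p$ on $\R^N\setminus\{0\}$. Since $H^\circ$ is $1$-homogeneous, Euler's identity yields $\langle\nabla H^\circ(x),x\rangle=H^\circ(x)$, and therefore a direct computation gives
\begin{equation*}
\operatorname{div} X(x)=\frac{N}{[H^\circ(x)]^p}-p\,\frac{\langle x,\nabla H^\circ(x)\rangle}{[H^\circ(x)]^{p+1}}=\frac{N-p}{[H^\circ(x)]^p}.
\end{equation*}
Multiplying by $|u|^p$ and integrating by parts, for $u\in C^\infty_c(\R^N\setminus\{0\})$ I obtain
\begin{equation*}
(N-p)\int_{\R^N}\frac{|u|^p}{[H^\circ(x)]^p}\,dx=-p\int_{\R^N}\frac{|u|^{p-2}u\,\langle x,\nabla u\rangle}{[H^\circ(x)]^p}\,dx.
\end{equation*}
By the defining property of the dual norm one has $|\langle x,\nabla u\rangle|\le H^\circ(x)\,H(\nabla u)$; combined with H\"older's inequality with exponents $p/(p-1)$ and $p$, this bounds the right-hand side by $p\,\bigl(\int|u|^p/[H^\circ(x)]^p\,dx\bigr)^{(p-1)/p}\bigl(\int H^p(\nabla u)\,dx\bigr)^{1/p}$. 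Dividing both sides by the first factor on the right and raising to the $p$-th power then produces \eqref{eq:Hardy} with constant $C_H=((N-p)/p)^p$.

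The final step is to extend the inequality to arbitrary $u\in\mathcal{D}^{1,p}(\R^N)$. Since $1<p<N$, the singleton $\{0\}$ has vanishing $p$-capacity and so $C^\infty_c(\R^N\setminus\{0\})$ is dense in $\mathcal{D}^{1,p}(\R^N)$; approximating $u$ by a sequence $u_n$ in this space, the right-hand side of \eqref{eq:Hardy} passes to the limit thanks to the norm equivalence \eqref{eq:equivalentNorm}, while Fatou's lemma provides the lower semicontinuity of the singular integral on the left. For the optimality of $C_H$, I would test \eqref{eq:Hardy} against truncated and mollified versions of the extremal $v(x):=[H^\circ(x)]^{-(N-p)/p}$; a direct computation based on \eqref{eq:PropFinsler}--\eqref{eq:PropFinsler2} and on the $0$-homogeneity of $\nabla H$ verifies that $v$ solves the Euler--Lagrange equation $-\Delta_p^H v=C_H\,v^{p-1}/[H^\circ(x)]^p$ on $\R^N\setminus\{0\}$, so that functions of the form $v\,\chi_{\{\varepsilon<H^\circ(x)<1/\varepsilon\}}$, suitably mollified to live in $\mathcal{D}^{1,p}(\R^N)$, saturate the inequality as $\varepsilon\to 0^+$. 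The main obstacle I foresee lies precisely in this last step: the extremal $v$ itself does not belong to $\mathcal{D}^{1,p}(\R^N)$, and the truncation/mollification scales must be chosen so that the contribution of the excised regions is quantitatively negligible on both sides of \eqref{eq:Hardy}, so that the sharp constant is genuinely attained in the limit.
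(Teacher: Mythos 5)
Your proof is correct, but it takes a genuinely different route than the one the paper relies on. The paper does not prove Theorem~\ref{thm:Hardy} at all: it cites \cite[Proposition~7.5]{VanHardy}, where the anisotropic Hardy inequality is derived as a consequence of the anisotropic (Finsler) symmetrization/rearrangement machinery, deducing the sharp constant from the radially symmetric euclidean case by comparison of Wulff balls. You instead give a direct, self-contained argument: the vector field $X(x)=x/[H^\circ(x)]^p$ has $\operatorname{div}X=(N-p)[H^\circ(x)]^{-p}$ by Euler's identity for the $1$-homogeneous $H^\circ$; integration by parts against $|u|^p$ plus the duality bound $|\langle x,\nabla u\rangle|\le H^\circ(x)H(\nabla u)$ (which is exactly the defining property of $H^\circ$, using $H(-\xi)=H(\xi)$) and H\"older with $(p/(p-1),p)$ yields the inequality with $C_H=((N-p)/p)^p$ on $C^\infty_c(\R^N\setminus\{0\})$, and density of this class in $\mathcal D^{1,p}(\R^N)$ for $1<p<N$ extends it. The computations check out, including the verification that $v=[H^\circ(x)]^{-(N-p)/p}$ solves $-\Delta_p^H v=C_H v^{p-1}/[H^\circ(x)]^p$ (indeed, from $\nabla H(\nabla H^\circ(x))=x/H^\circ(x)$ one gets $-\Delta_p^H v=\mu^p[H^\circ]^{-\mu(p-1)-p}$ with $\mu=(N-p)/p$). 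What your approach buys is transparency and elementariness: it completely bypasses rearrangement theory and isolates the single structural fact that makes the anisotropic case work, namely the $H$--$H^\circ$ duality pairing applied to $\langle x,\nabla u\rangle$. What the cited symmetrization approach buys is a uniform framework that simultaneously handles Sobolev and Hardy--Sobolev type inequalities and their cases of equality; your optimality argument via truncated mollifications of the explicit extremal is sound but, as you note, requires estimating the boundary-layer contributions and the mollification error with matched scales, which is some bookkeeping the symmetrization route avoids.
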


Now we prove a technical lemma that will be very important in the proof of the asymptotic estimates.

\begin{lem}\label{lem:a^p>b^p}
	Let $p>1$ and $a,b \geq 0$. Then, for all $\delta>0$ there exist $\mathcal{C}_\delta>0$ such that
	\begin{equation}\label{eq:a^p>b^p}
		a^p \geq \frac{1}{1+ 2^{p+1}\delta} (a+b)^p - \mathcal{C}_\delta b^p.
	\end{equation}
\end{lem}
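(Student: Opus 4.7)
The plan is to reformulate the claim as a standard ``Young-with-$\varepsilon$'' inequality for $(a+b)^p$. After multiplying through by $1+2^{p+1}\delta$ and rearranging, the desired bound
\[
a^p \geq \frac{1}{1+2^{p+1}\delta}(a+b)^p - \mathcal{C}_\delta\, b^p
\]
is equivalent to producing a constant $\widetilde{\mathcal{C}}_\delta>0$ (depending on $p$ and $\delta$) such that
\[
(a+b)^p \leq (1+2^{p+1}\delta)\, a^p + \widetilde{\mathcal{C}}_\delta\, b^p \qquad \text{for all } a,b\geq 0,
\]
with $\mathcal{C}_\delta = \widetilde{\mathcal{C}}_\delta/(1+2^{p+1}\delta)$. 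So the whole statement reduces to this elementary scalar inequality.

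To prove the latter I would split on the ratio $a/b$ (the case $b=0$ being trivial). Fix a threshold $K=K(\delta,p)>0$ to be chosen. If $a \geq K b$, then $b/a\leq 1/K$ and monotonicity of $t\mapsto t^p$ gives
\[
(a+b)^p = a^p\bigl(1+b/a\bigr)^p \leq (1+1/K)^p\, a^p;
\]
since $(1+1/K)^p\to 1$ as $K\to\infty$, one may choose $K$ large enough so that $(1+1/K)^p \leq 1+2^{p+1}\delta$, and the required bound already holds in this regime without any $b^p$ correction. If instead $a< Kb$, then $a+b<(K+1)b$, so
\[
(a+b)^p < (K+1)^p\, b^p,
\]
and, dropping the nonnegative term $(1+2^{p+1}\delta)\,a^p$ on the right, the bound holds with $\widetilde{\mathcal{C}}_\delta=(K+1)^p$.

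Combining the two cases yields the lemma with $\mathcal{C}_\delta=(K+1)^p/(1+2^{p+1}\delta)$. There is no substantive obstacle: the only (minor) technical point is confirming that $K$ can be chosen so that $(1+1/K)^p\leq 1+2^{p+1}\delta$ for every $\delta>0$, which is immediate from continuity and monotonicity of $K\mapsto(1+1/K)^p$ (e.g.\ any $K$ with $1/K\leq (1+2^{p+1}\delta)^{1/p}-1$ works). The lemma is an elementary scalar inequality which, as its placement just before the asymptotic proofs suggests, will later be used to absorb lower-order $b^p$ cross terms into a small multiple of $(a+b)^p$; the specific prefactor $2^{p+1}\delta$ is only a convenient bookkeeping choice tailored to the subsequent applications.
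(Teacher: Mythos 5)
Your proof is correct. You reduce the claim to showing $(a+b)^p \leq (1+2^{p+1}\delta)\,a^p + \widetilde{\mathcal{C}}_\delta\, b^p$, and then settle this by a clean dichotomy on the ratio $a/b$: choose $K$ with $(1+1/K)^p \leq 1+2^{p+1}\delta$, handle $a\geq Kb$ by the monotonicity bound $(a+b)^p\leq(1+1/K)^p a^p$, and handle $a<Kb$ by $(a+b)^p<(K+1)^p b^p$. This is a genuinely different route from the paper's. The paper writes $p=\lfloor p\rfloor+\{p\}$, expands $(a+b)^{\lfloor p\rfloor}$ by the binomial theorem, uses subadditivity $(a+b)^{\{p\}}\leq a^{\{p\}}+b^{\{p\}}$, and then applies a weighted Young inequality to each of the $\simeq 2^{p+1}$ resulting cross terms; the prefactor $2^{p+1}$ appears there as a bound on the sum of binomial coefficients. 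Your argument is shorter and avoids the combinatorial bookkeeping entirely, while the paper's makes the origin of the constant $2^{p+1}$ transparent (it is the natural byproduct of the binomial expansion) and keeps explicit track of how $\mathcal{C}_\delta$ depends on $\delta$ through Young. Both are valid; as you correctly observe, the specific factor $2^{p+1}$ carries no special meaning for the lemma itself, only for the way it is consumed downstream.
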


\begin{proof}
	Let us consider $p>1$ as  follows:
	$$p=\lfloor p \rfloor + \{p\},$$
	where $\lfloor \cdot \rfloor$ is the floor function and $\{\cdot\}$ is the mantissa function. Without loss of generality we assume that $\{p\} \neq 0$ and, moreover, we set $m:= \lfloor p \rfloor$. Hence, we have
	\begin{equation}\label{eq:Newton1}
		(a+b)^p=(a+b)^{\{p\}}(a+b)^m= (a+b)^{\{p\}} \sum_{k=0}^m {m \choose k} a^{m-k} b^k = (\star)
	\end{equation}
	Noticing that $0 \leq \{p\} < 1$ it follows that
	$$(a+b)^{\{p\}} \leq a^{\{p\}} + b^{\{p\}}.$$
	Using this inequality in \eqref{eq:Newton1}, we deduce
	\begin{equation}\label{eq:Newton2}
		\begin{split}
		(\star) \leq& a^{\{p\}} \sum_{k=0}^m {m \choose k} a^{m-k} b^k + b^{\{p\}} \sum_{k=0}^m {m \choose k} a^{m-k} b^k\\
		=& a^p+b^p+\sum_{k=1}^m {m \choose k} a^{p-k} b^k +  \sum_{k=0}^{m-1} {m \choose k} a^{m-k} b^{k+\{p\}}=(\ast),
		\end{split}
	\end{equation}

    where we used the fact that $p=m+\{p\}$.
    Now, we can apply the weighted Young's inequality to each member of the first sum with conjugate exponents $\left(p/(p-k),p/k\right)$ and to each member of the second sum with conjugate exponents $\left(p/(m-k),p/(k+\{p\})\right)$ 
    as follows
    \begin{equation}
    	\begin{split}
    		a^{p-k}b^k &\leq \frac{p-k}{p} \delta a^p + \frac{k}{p} \mathcal{C}_\delta b^p \leq \delta a^p + \mathcal{C}_\delta b^p,\\
    		a^{m-k} b^{k+\{p\}} & \leq \delta a^p + \mathcal{C}_\delta b^p.
    	\end{split}
    \end{equation}
    Hence, using this estimate we deduce  
   	\begin{equation}\label{eq:Newton3}
   		\begin{split}
   			(\ast) \leq& a^p+b^p+ \left(\delta a^p + \mathcal{C}_\delta b^p\right)  \sum_{k=1}^m {m \choose k} +  \left(\delta a^p + \mathcal{C}_\delta b^p\right) \sum_{k=0}^{m-1}  {m \choose k} \\
   			\leq& \left(1+2^{p+1} \delta \right)a^p + \mathcal{C}_\delta b^p,
   		\end{split}
   	\end{equation}
   where we renamed $\mathcal{C}_\delta:= \left(1+2^{p+1} \mathcal{C}_\delta \right)$.
   Collecting \eqref{eq:Newton1}, \eqref{eq:Newton2} and \eqref{eq:Newton3}, we deduce that 
   \begin{equation}
	a^p \geq \frac{1}{1+ 2^{p+1}\delta} (a+b)^p - \mathcal{C}_\delta b^p,
   \end{equation}
with $\mathcal{C}_\delta:=\mathcal{C}_\delta/(1+2^{p+1}\delta)$, and hence the thesis \eqref{eq:a^p>b^p}.
\end{proof}

Finally, we recall a lemma (see Lemma 4.19 in \cite{HanLin}) that will be very useful in the proofs of our results.

\begin{lem}[\cite{HanLin}]\label{lem:nondec}
Let $\mathcal{L}$ and $g$ be two nondecreasing functions on the interval $(0, \bar R]$, for some $\bar R>0$. Suppose that it holds
$$\mathcal{L}(\tau R) \leq \sigma \mathcal{L}(R) + g(R) \quad \text{for all } R \leq \bar R,$$
for some $0 < \sigma, \tau < 1$. Then, for any $\mu \in (0,1)$ and $R \leq \bar R$ we have
$$\mathcal{L}(R) \leq \frac{1}{\sigma} \left(\frac{R}{\bar R}\right)^\alpha \mathcal{L}(\bar R) + \frac{1}{1-\sigma}g(\bar R^\mu R^{1-\mu})$$
where $\alpha = \alpha (\sigma, \tau, \mu) =(1-\mu) \log \sigma / \log \tau$.
\end{lem}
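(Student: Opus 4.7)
The plan is to prove the lemma by iterating the recursion $\mathcal{L}(\tau R)\le \sigma\mathcal{L}(R)+g(R)$ a controlled number of times starting from $\bar R$ and then exploiting the monotonicity of $\mathcal{L}$ and $g$ to bound all the resulting $g$-terms by $g$ evaluated at the single point $\bar R^\mu R^{1-\mu}$. The parameter $\mu\in(0,1)$ is the knob that balances the amount of decay gained through iteration against how far into the interval one is forced to evaluate $g$.

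First I would derive, by a straightforward induction on $k\in\N$, the iterated inequality
\begin{equation*}
\mathcal{L}(\tau^k\bar R) \;\le\; \sigma^k\,\mathcal{L}(\bar R)+\sum_{j=0}^{k-1}\sigma^{k-1-j}\,g(\tau^j\bar R).
\end{equation*}
For a generic $R\in(0,\bar R]$ I would then choose the unique $k\in\N$ with $\tau^{k+1}\bar R<R\le\tau^k\bar R$; by monotonicity of $\mathcal{L}$, $\mathcal{L}(R)\le\mathcal{L}(\tau^k\bar R)$. The defining inequality of $k$ gives $\tau^{k+1}<R/\bar R$, hence
\begin{equation*}
\sigma^k \;=\; \sigma^{-1}\sigma^{k+1} \;\le\; \sigma^{-1}(R/\bar R)^{\log\sigma/\log\tau} \;\le\; \sigma^{-1}(R/\bar R)^\alpha,
\end{equation*}
where the last inequality uses $R/\bar R<1$ and $\alpha=(1-\mu)\log\sigma/\log\tau<\log\sigma/\log\tau$. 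This already produces both the prefactor $1/\sigma$ and the exponent $\alpha$ appearing in the statement.

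The core step is then the control of the finite sum, which I would split at the integer $m=\lceil(1-\mu)k\rceil$. For indices $j\ge m$ the monotonicity of $g$ together with $\tau^j\bar R\le\tau^m\bar R\le\bar R^\mu R^{1-\mu}$ gives $g(\tau^j\bar R)\le g(\bar R^\mu R^{1-\mu})$, so after summing the geometric factors $\sum_{j=m}^{k-1}\sigma^{k-1-j}\le 1/(1-\sigma)$ one recovers exactly the claimed contribution $g(\bar R^\mu R^{1-\mu})/(1-\sigma)$. For the head $j<m$ only the trivial bound $g(\tau^j\bar R)\le g(\bar R)$ is available, but the weights $\sigma^{k-1-j}$ are exponentially small, controlled by $\sigma^{k-m}/(1-\sigma)\le\sigma^{\mu k-1}/(1-\sigma)$.

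The hard part will be this last combination: the head piece contributes a term of order $g(\bar R)\sigma^{\mu k-1}/(1-\sigma)$, and making it merge cleanly with $\sigma^k\mathcal{L}(\bar R)$ into the single clean bound $\sigma^{-1}(R/\bar R)^\alpha\mathcal{L}(\bar R)$ is precisely what forces the specific choice $\alpha=(1-\mu)\log\sigma/\log\tau$ and requires careful bookkeeping of the rounding of $(1-\mu)k$ to an integer together with the discretization error in replacing $R$ by $\tau^k\bar R$. One sees in this way the intended interpolation: $\mu\to 0$ corresponds to iterating all the way (maximal decay in $\mathcal{L}$ and $g$ evaluated near $R$), while $\mu\to 1$ corresponds to no iteration at all (no decay, but $g$ evaluated near $\bar R$).
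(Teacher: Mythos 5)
The paper gives no proof of this lemma, citing only \cite{HanLin}, so I assess your argument on its own terms. There is a genuine gap, and it is exactly where you locate it, but it is not a matter of ``careful bookkeeping'': your split of the sum produces, in the head $j<m$, a term of order $\sigma^{k-m}g(\bar R)/(1-\sigma)$, and this is a $g(\bar R)$-quantity, not an $\mathcal{L}(\bar R)$-quantity. Since $\mathcal{L}$ and $g$ are unrelated nondecreasing functions, a multiple of $g(\bar R)$ cannot be absorbed into $\sigma^{-1}(R/\bar R)^\alpha\mathcal{L}(\bar R)$, nor can it be compared to $g(\bar R^\mu R^{1-\mu})$. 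At best your route yields a bound of the shape $C\bigl[(R/\bar R)^\alpha\bigl(\mathcal{L}(\bar R)+g(\bar R)\bigr)+g(\cdot)\bigr]$, which is strictly weaker than the statement. There is also a smaller defect: to guarantee $\tau^j\bar R\le\bar R^\mu R^{1-\mu}$ you must use the lower bound $R/\bar R>\tau^{k+1}$, which forces the split index $m=\lceil(1-\mu)(k+1)\rceil$; your $m=\lceil(1-\mu)k\rceil$ is off by one in the wrong direction (take $\tau=1/2$, $\mu=1/2$, $k=10$ and $R/\bar R$ just above $\tau^{11}$ for a counterexample to $\tau^m\le(R/\bar R)^{1-\mu}$).

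The intended proof avoids the split entirely. Set $\rho:=\bar R^\mu R^{1-\mu}$ (so $R\le\rho\le\bar R$) and iterate the hypothesis \emph{starting from $\rho$ rather than from $\bar R$}: since $g(\tau^j\rho)\le g(\rho)$ for every $j\ge 0$ by monotonicity, the iterated inequality collapses at once to $\mathcal{L}(\tau^k\rho)\le\sigma^k\mathcal{L}(\rho)+g(\rho)/(1-\sigma)$ with no splitting and no residual head. Then pick $k$ with $\tau^{k+1}\rho<R\le\tau^k\rho$, use $\mathcal{L}(R)\le\mathcal{L}(\tau^k\rho)$ and $\mathcal{L}(\rho)\le\mathcal{L}(\bar R)$, and convert $\sigma^k$ as you did, noting that now $R/\rho=(R/\bar R)^\mu$, so $\sigma^k\le\sigma^{-1}(R/\bar R)^{\mu\log\sigma/\log\tau}$. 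You will notice this delivers the exponent $\mu\log\sigma/\log\tau$ while the statement as printed asserts $(1-\mu)\log\sigma/\log\tau$; the two are reconciled only if the argument of $g$ is $\bar R^{1-\mu}R^{\mu}$ (which is the form in Han--Lin, Lemma 4.19, with the \emph{smaller} radius carrying the exponent $\mu$). In short: the clean proof starts at the geometric mean, and the statement as quoted in the paper appears to have $\mu$ and $1-\mu$ interchanged in one of the two places.
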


\section{Preliminary asymptotic estimates and comparison principles} \label{preliminaryEst}

The aim of this section is to prove some preliminary estimates that will be crucial in the proofs of the main results.

\begin{lem}\label{lem:aux1}
	 There exists a positive constant $\tau$ depending only on $N, p$ and $\gamma$ such that for any $\bar{R}> 0$ and for any solution $u$  to problem \eqref{eq:ProbDoublyCritical} satisfying 
	\begin{equation}\label{eq:assumptionsmall}
		\|u\|_{L^{p^*}(\mathcal{B}^{H^\circ}_{\bar R})} + \|u\|_{L^{p^*}(\R^N \setminus \mathcal{B}^{H^\circ}_{1 / \bar R})} \leq \tau , 
	\end{equation} 
	 there exists a positive constant $\mathcal{C}$ depending only on $N, p, \gamma$ and $\overline R$ such that 
	\begin{equation} \label{eq:EstAt0}
		\|u\|_{L^{p^*}(\mathcal{B}^{H^\circ}_R)} \leq \mathcal{C} R^{\sigma_1}  \quad \text{for } R \leq \bar R, 
	\end{equation} 
	and that
	\begin{equation}  \label{eq:EstAtInfty}
       	\|u\|_{L^{p^*}((\mathcal{B}^{H^\circ}_R)^c)} \leq \frac{\mathcal{C}}{R^{\sigma_2}} \quad \text{for } R \geq \frac{1}{\bar R}, 
	\end{equation} 
	where $\sigma_1, \sigma_2$ are two positive constants depending on $N, p $ and $\gamma$.
\end{lem}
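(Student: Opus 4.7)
The strategy is to combine an anisotropic Caccioppoli-type estimate with the hole-filling iteration encoded by Lemma~\ref{lem:nondec}. I would first, for a non-negative cutoff $\eta\in C^\infty_c(\R^N)$, test the weak formulation \eqref{eq:weakDoublyCritical} with $\varphi=u\eta^p$. Euler's identity \eqref{eq:EulerThm} makes the principal part produce $\int \eta^p H^p(\nabla u)$ plus a cross term in $\nabla\eta$, which Young's inequality together with the bound \eqref{eq:BddH} on $|\nabla H|$ dominates by $\varepsilon\int \eta^p H^p(\nabla u)+C_\varepsilon\int u^p H^p(\nabla\eta)$. The Hardy term is handled by applying Theorem~\ref{thm:Hardy} to $u\eta\in\mathcal{D}^{1,p}(\R^N)$ together with the elementary envelope bound
\[
H^p(\nabla(u\eta))\leq (1+\varepsilon)^{p-1}\eta^p H^p(\nabla u)+C_\varepsilon u^p H^p(\nabla\eta),
\]
a consequence of subadditivity and $1$-homogeneity of $H$. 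Since $\gamma<C_H$, choosing $\varepsilon$ small makes $\gamma(1+\varepsilon)^{p-1}/C_H<1$, so the Hardy term can be absorbed by the principal part. Coupling the resulting estimate with the anisotropic Sobolev inequality applied to $u\eta$ (which follows from the ellipticity $\alpha_1|\xi|\le H(\xi)$) and re-using the envelope inequality yields
\[
\left(\int (u\eta)^{p^*}\right)^{p/p^*}\leq C_\star\int u^{p^*}\eta^p+C_\star\int u^p H^p(\nabla\eta).
\]

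To exploit \eqref{eq:assumptionsmall} I would next bound the critical integral via H\"older,
\[
\int u^{p^*}\eta^p\leq \|u\|_{L^{p^*}(\mathrm{supp}\,\eta)}^{p^*-p}\left(\int (u\eta)^{p^*}\right)^{p/p^*},
\]
and choose $\tau>0$ small (depending only on $N,p,\gamma,H$) so that $C_\star\tau^{p^*-p}\leq 1/2$. Then for any cutoff supported in $\mathcal{B}^{H^\circ}_{\bar R}$ (respectively in $(\mathcal{B}^{H^\circ}_{1/\bar R})^c$), the critical term is absorbed on the left, leaving the clean Caccioppoli-type estimate
\[
\left(\int (u\eta)^{p^*}\right)^{p/p^*}\leq C\int u^p H^p(\nabla\eta).
\]

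For \eqref{eq:EstAt0} I specialize $\eta$ so that $\eta\equiv 1$ on $\mathcal{B}^{H^\circ}_R$, $\mathrm{supp}\,\eta\subset\mathcal{B}^{H^\circ}_{2R}$ and $H(\nabla\eta)\leq C/R$, valid for $R\leq \bar R/2$. On the annulus $A:=\mathcal{B}^{H^\circ}_{2R}\setminus\mathcal{B}^{H^\circ}_R$, H\"older combined with $|A|\leq CR^N$ and the identity $N(p^*-p)/p^*=p$ gives $\int_A u^p\leq CR^p\bigl(\int_A u^{p^*}\bigr)^{p/p^*}$, so the $R^{-p}$ coming from $H^p(\nabla\eta)$ cancels exactly. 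Raising to the power $p^*/p$ and hole-filling the resulting $\int_{\mathcal{B}^{H^\circ}_R}u^{p^*}\leq C_0\int_A u^{p^*}$ produces
\[
\int_{\mathcal{B}^{H^\circ}_R}u^{p^*}\leq \theta \int_{\mathcal{B}^{H^\circ}_{2R}}u^{p^*},\qquad \theta=\tfrac{C_0}{1+C_0}\in(0,1),
\]
for every $R\leq \bar R/2$. Lemma~\ref{lem:nondec} applied with $\mathcal{L}(R)=\int_{\mathcal{B}^{H^\circ}_R}u^{p^*}$, dilation factor $1/2$, contraction $\theta$ and $g\equiv 0$ then gives $\mathcal{L}(R)\leq C R^{\alpha}$ for some $\alpha>0$, and taking $p^*$-th roots yields \eqref{eq:EstAt0} with $\sigma_1=\alpha/p^*$. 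Estimate \eqref{eq:EstAtInfty} follows symmetrically by choosing the cutoff to vanish on $\mathcal{B}^{H^\circ}_R$ and equal one outside $\mathcal{B}^{H^\circ}_{2R}$ and iterating the same scheme outward.

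I expect the main obstacle to lie in the first step: the simultaneous quantitative absorption of the singular Hardy term and of the cross-gradient term by the principal part, leaving a strictly positive coefficient in front of $\int \eta^p H^p(\nabla u)$. This is exactly where the strict Hardy gap $\gamma<C_H$ and the uniform ellipticity of $H$ are used in a sharp way, and any loss of quantitativeness here would spoil the later absorption of the critical term. Once the clean Caccioppoli estimate is in place, the dyadic iteration through Lemma~\ref{lem:nondec} is routine.
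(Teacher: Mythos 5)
Your proposal is correct and it is essentially the route the paper follows: test with $u\eta^p$, absorb the cross term by Young, control the Hardy term by applying the sharp Hardy inequality to $u\eta$, use the gap $\gamma<C_H$ to absorb, pass to a Caccioppoli--Sobolev inequality, absorb the critical term via the smallness of $\|u\|_{L^{p^*}}$ on the relevant region, and run the hole-filling iteration through Lemma~\ref{lem:nondec}. The one genuine difference is how you transfer the estimate between $\int\eta^p H^p(\nabla u)$ and $\int H^p(\nabla(u\eta))$: you invoke the elementary convexity bound $(a+b)^p\le(1+\varepsilon)^{p-1}a^p+C_\varepsilon b^p$ (which, after rearrangement, gives $a^p\ge(1+\varepsilon)^{1-p}(a+b)^p-C_\varepsilon' b^p$), whereas the paper isolates this as Lemma~\ref{lem:a^p>b^p} with the coefficient written as $1/(1+2^{p+1}\delta)$ and proves it by a binomial decomposition using the floor and fractional parts of $p$. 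The two bounds are equivalent up to reparametrizing the small constant, and your derivation is shorter and more transparent than the paper's proof of Lemma~\ref{lem:a^p>b^p}, so this is a genuine (if minor) simplification rather than a loss of content. Two small remarks that do not affect correctness: the constants necessarily depend also on $H$ (through $\alpha_1,\alpha_2,M$), exactly as in the paper's own computations; and since your cutoff is $\equiv 1$ on $\mathcal B^{H^\circ}_R$ and supported in $\mathcal B^{H^\circ}_{2R}$, the direct conclusion is for $R\le\bar R/2$ rather than $R\le\bar R$, but this is absorbed by monotonicity of $\mathcal L(R)$ and a harmless adjustment of the constant, and likewise the exterior case needs the usual change of variable $R\mapsto 1/R$ (or noting that $\mathcal L$ is nonincreasing there) before invoking Lemma~\ref{lem:nondec}.
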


\begin{proof}
	We start proving \eqref{eq:EstAt0}. To this aim let us consider $ R>0$ and a cut-off function $\eta \in C^\infty_c(\R^N)$ such that
    \begin{equation}\label{eq:cutoff}
    	\left\{\begin{array}{llll}
		0 \leq \eta \leq 1 \quad &\text{in } \R^N\\
		\eta \equiv 0 \quad &\text{in } (\mathcal{B}^{H^\circ}_R)^c\\
		\eta \equiv 1 \quad &\text{in } \mathcal{B}^{H^\circ}_{R / 2}\\
		|\nabla \eta| \leq \frac{4}{R} &\text{in } \mathcal{B}^{H^\circ}_R \setminus \mathcal{B}^{H^\circ}_{R / 2}.
	\end{array}\right.
    \end{equation}
	By density argument it is possible to put $\varphi=\eta^p u$ as test function in \eqref{eq:weakDoublyCritical}, so that we obtain
	\begin{equation}\label{eq:weakForTest1}
		\begin{split}
		&\int_{\R^N} p \eta^{p-1} u H^{p-1}(\nabla u) \langle \nabla H(\nabla u) ,  \nabla \eta  \rangle  \, dx + \int_{\R^N}\eta^p H^{p-1}(\nabla u) \langle \nabla H(\nabla u) ,   \nabla u \rangle  \, dx\\
		&= \int_{\R^N} \frac{\gamma}{H^\circ(x)^p} u^{p-1} \eta^p u \, dx + \int_{\R^N} 	u^{p^*-1} \eta^p u \, dx
		\end{split}.
	\end{equation}

	First of all, using Euler's Theorem \eqref{eq:EulerThm}, the 0-homogeneity of $\nabla H$ \eqref{eq:BddH} and Schwarz's inequality, equation \eqref{eq:weakForTest1}  becomes
	\begin{equation}\label{eq:weakForTest2}
		\begin{split}
			&\int_{\R^N} H^p (\nabla u) \eta^p \, dx = \int_{\R^N}H^{p-1}(\nabla u) \langle \nabla H(\nabla u) ,   \nabla u \rangle \eta^p \, dx\\
			& \leq p \ M \int_{\R^N} H^{p-1}(\nabla u)  |\nabla \eta| \eta^{p-1} u \, dx + \int_{\R^N} \frac{\gamma}{H^\circ(x)^p} u^p \eta^p  \, dx +\int_{\R^N} 	u^{p^*} \eta^p  \, dx
		\end{split}
	\end{equation}

    Recalling that $H$ is $1$-homogeneous function, using the weighted Young's inequality $ab \leq  \varepsilon a^{\frac{p}{p-1}} + \mathcal{C}_\varepsilon b^p$ on the first term of the right hand side of \eqref{eq:weakForTest2}, for any $0<\varepsilon<1$ we have
 \begin{equation}\label{eq:weakForTest3}
	\begin{split}
		\int_{\R^N} H^p (\eta \nabla u) \, dx  & \le\varepsilon   \int_{\R^N} H^p(\eta \nabla u)  \, dx + C(p,M,\varepsilon) \int_{\R^N}  |\nabla \eta|^p u^p \, dx \\
		&\ \ + \int_{\R^N} \frac{\gamma}{H^\circ(x)^p} u^{p} \eta^p  \, dx + \int_{\R^N} 	u^{p^*} \eta^p  \, dx,\\
	\end{split}
\end{equation} 
where $ C(p,M,\varepsilon):=(pM)^p \mathcal{C}_\varepsilon$. Now, noticing that $\nabla(\eta u) = u \nabla \eta + \eta \nabla u$, by the triangular inequality, we deduce that for every $p>1$ it holds
\begin{equation}\label{eq:FinTrIn}
	 H^p(\nabla(\eta u)) =H^p(u \nabla \eta + \eta \nabla u)\leq \left[H(u \nabla \eta) +H(\eta \nabla u) \right]^p.
\end{equation}
Thanks to \eqref{eq:FinTrIn} and applying Lemma \ref{lem:a^p>b^p} with $a=H(\eta\nabla u)$ and $b=H(u\nabla \eta)$, we deduce that
\begin{equation}\label{eq:lemmaTypeYoung}
	\int_{\R^N} H^p(\eta \nabla u) \,dx \geq \frac{1}{1+2^{p+1}\delta} \int_{\R^N} H^p(\nabla (\eta u)) \,dx - \mathcal{C}_\delta \int_{\R^N} H^p(u \nabla \eta) \, dx
\end{equation}

Using \eqref{eq:lemmaTypeYoung} in \eqref{eq:weakForTest3} we obtain
\begin{equation}\label{eq:weakForTest4}
	\begin{split}
		\frac{1-\varepsilon}{1+2^{p+1}\delta}\int_{\R^N} H^p (\nabla (\eta u))  \, dx \leq& \mathcal{C}_\delta \int_{\R^N} H^p(u \nabla \eta) \, dx + C(p,M,\varepsilon)  \int_{\R^N}  |\nabla \eta|^p u^p \, dx \\
		&+ \gamma \int_{\R^N} \frac{(u \eta)^p}{H^\circ(x)^p}   \, dx  + \int_{\R^N} 	u^{p^*} \eta^p  \, dx\\
	\end{split}
\end{equation}

Now, applying the anisotropic Hardy inequality (see Theorem \ref{thm:Hardy} or \cite{VanHardy}) and \eqref{eq:equivalentNorm} we have
\begin{equation}\label{eq:weakForTest5}
	\begin{split}
		\frac{1-\varepsilon}{1+2^{p+1}\delta}\int_{\R^N} H^p (\nabla (\eta u))  \, dx  \leq&  \left(\alpha_2^p \mathcal{C}_\delta +  C(p,M,\varepsilon) \right) \int_{\R^N}  |\nabla \eta|^p u^p \, dx \\
		&+ \frac{\gamma}{C_H}\int_{\R^N} H^p (\nabla (\eta u))  \, dx + \int_{\R^N} 	u^{p^*} \eta^p  \, dx.
	\end{split}
\end{equation} 
Let us fix $\varepsilon, \delta >0$ sufficiently small such that $\mathcal{C}_1:=({1-\varepsilon})/({1+2^{p+1}\delta}) - {\gamma}/{C_H} > 0$, so that
\begin{equation}\label{eq:weakForTest6}
		\mathcal{C}_1 \int_{\R^N} H^p (\nabla (\eta u))  \, dx  \leq  \mathcal{C}_2 \int_{\R^N}  |\nabla \eta|^p u^p \, dx  + \int_{\R^N} 	u^{p^*} \eta^p  \, dx,
\end{equation}
where $\mathcal{C}_2:=  C(p,M,\varepsilon) + \alpha_2^p \mathcal{C}_\delta$.
By \eqref{eq:equivalentNorm} we have
\begin{equation}\label{eq:weakForTest7}
	\alpha_1^p\mathcal{C}_1 \int_{\R^N} |\nabla (\eta u)|^p  \, dx  \leq  \mathcal{C}_2 \int_{\R^N}  |\nabla \eta|^p u^p \, dx  + \int_{\R^N} 	u^{p^*} \eta^p  \, dx.
\end{equation}
Now, using the Sobolev inequality in the left hand side of \eqref{eq:weakForTest7}, the H\"older inequality and \eqref{eq:cutoff} in the right hand side, we obtain
\begin{equation}\label{eq:weakForTest8}
	\begin{split}
	\alpha_1^p\mathcal{C}_1 \mathcal{C}_S^p \left(\int_{\R^N} |\eta u|^{p^*} \,dx \right)^\frac{p}{p^*} &\leq \alpha_1^p\mathcal{C}_1 \int_{\R^N} |\nabla (\eta u)|^p  \, dx  \\
	&\leq  \mathcal{C}_2 \int_{\R^N}  |\nabla \eta|^p u^p \, dx  + \int_{\R^N} 	u^{p^*-p} (\eta u)^p  \, dx\\
	& \leq \mathcal{C}_2 \left(\int_{\mathcal{B}^{H^\circ}_R \setminus \mathcal{B}^{H^\circ}_{R / 2}}  |\nabla \eta|^N  \, dx\right)^{\frac{p}{N}} \left(\int_{\mathcal{B}^{H^\circ}_R \setminus \mathcal{B}^{H^\circ}_{R / 2}}  	u^{p^*}  \, dx\right)^{\frac{p}{p^*}}  \\
	& \ \ + \left(\int_{\mathcal{B}^{H^\circ}_R}  u^{p^*}  \, dx\right)^{\frac{p}{N}} \left(\int_{\R^N}  (\eta u)^{p^*}  \, dx\right)^{\frac{p}{p^*}}\\
	& \leq \mathcal{C}_2 C(p,N) \left(\int_{\mathcal{B}^{H^\circ}_R \setminus \mathcal{B}^{H^\circ}_{R / 2}}  	u^{p^*}  \, dx\right)^{\frac{p}{p^*}} \\
	& \ \ + \|u\|_{p^*}^{p^*-p} \left(\int_{\R^N}  (\eta u)^{p^*}  \, dx\right)^{\frac{p}{p^*}},
\end{split}
\end{equation}
hence we deduce
\begin{equation}\label{eq:weakForTest9}
	\begin{split}
		\left(\int_{\R^N} |\eta u|^{p^*} \,dx \right)^\frac{p}{p^*} \leq \frac{\mathcal{C}_2 C(p,N)}{\alpha_1^p\mathcal{C}_1 \mathcal{C}_S^p} \left(\int_{\mathcal{B}^{H^\circ}_R \setminus \mathcal{B}^{H^\circ}_{R / 2}}  	u^{p^*}  \, dx\right)^{\frac{p}{p^*}} + \frac{\|u\|_{L^{p^*}(\mathcal{B}^{H^\circ}_R)}^{p^*-p}}{\alpha_1^p\mathcal{C}_1 \mathcal{C}_S^p} \left(\int_{\R^N}  (\eta u)^{p^*}  \, dx\right)^{\frac{p}{p^*}},
	\end{split}
\end{equation}
where $C(p,N)$ is a positive constant depending on $p$ and $N$.

Setting
$$\tau = \left(\frac{\alpha_1^p\mathcal{C}_1 \mathcal{C}_S^p}{2}\right)^{\frac{1}{p^*-p}},$$
and choosing $\overline{R}>0$ sufficiently small such that \eqref{eq:assumptionsmall}
holds, then $\|u\|_{L^{p^*}(\mathcal{B}^{H^\circ}_R)}^{p^*-p}/(\alpha_1^p\mathcal{C}_1 \mathcal{C}_S^p) \leq 1/2$  for all $0<R \leq \overline R$. Hence we obtain that
$$\int_{\mathcal{B}^{H^\circ}_{R/2}} u^{p^*} \,dx  \leq \int_{\R^N} |\eta u|^{p^*} \,dx \leq \bar C \int_{\mathcal{B}^{H^\circ}_R \setminus \mathcal{B}^{H^\circ}_{R / 2}}  	u^{p^*}  \, dx \qquad \forall \  0 < R \leq \bar R,$$
where $\bar C := \left((2\mathcal{C}_2 C(p,N))/({\alpha_1^p\mathcal{C}_1 \mathcal{C}_S^p})\right)^{\frac{p^*}{p}}$ and it depends only on $N$, $p$ and $\gamma$. Denoting with $\displaystyle \mathcal{L}(R):=\int_{\mathcal{B}^{H^\circ}_R}u^{p^*} \, dx$ for $0 < R \leq \bar R$, we get that
$$\mathcal{L}(R/2) \leq \vartheta \mathcal{L}(R) \qquad \forall \ 0 < R \leq \bar R,$$
where $\vartheta = \bar C/(\bar C + 1)\in (0,1)$, depends only on $N$, $p$ and $\gamma$. Now, by Lemma \ref{lem:nondec} it follows that
$$\mathcal{L}(R) \leq \frac{1}{\vartheta} \mathcal{L}(\bar R) \left(\frac{R}{\bar R}\right)^{\sigma_1 '} \qquad \forall \ 0 < R \leq \bar R,$$
where $\sigma_1'=\frac{1}{2}\log(1 / \vartheta)/ \log 2$ depends only on $\vartheta$, Now \eqref{eq:EstAt0} follows by setting $\sigma_1 = \sigma_1' / p^*$ and $\mathcal{C}=(\vartheta^{-1}\overline R^{-\sigma_1'}\mathcal{L}(\bar R))^{1/p^*}$. In a similar way, we can deduce \eqref{eq:EstAtInfty}.
\end{proof}

Now, we denote by $\mathcal{A}_R=\mathcal{B}^{H^\circ}_{8R} \setminus \mathcal{B}^{H^\circ}_{R/8}$ and $\mathcal{D}_R=\mathcal{B}^{H^\circ}_{4R} \setminus \mathcal{B}^{H^\circ}_{R/4}$ for $R>0$. 
\begin{lem} \label{aux2}
	Let $t \in (p^*, N / \mu_1)$. There exists a positive constant $\sigma = \sigma(N,p,\gamma,t)$ such that for any solution $u$  to problem \eqref{eq:ProbDoublyCritical} and for any $\bar R> 0$ satisfying the following inequality 
	\begin{equation}\label{eq:assumptionsmallaux2}
		\|u\|_{L^{p^*}(B_{\bar R})} + \|u\|_{L^{p^*}(\R^N \setminus B_{1 / \bar R})} \leq \sigma,
	\end{equation} 
    then 
	\begin{equation} \label{eq:EstAt0aux2}
		\left(\fint_{\mathcal{D}_R} u^t \, dx \right)^{\frac{1}{t}} \leq \mathcal{C} \left(\fint_{\mathcal{A}_R} u^{p^*} \, dx \right)^{\frac{1}{p^*}} \qquad \forall R<\overline{R}/8 \text{ or } R>8/\overline{R},
	\end{equation} 
	where $\displaystyle \fint_{\mathcal{D}_R} u^t \, dx = \frac{1}{|\mathcal{D}_R|} \int_{\mathcal{D}_R} u^t \, dx$ and $\mathcal{C}$ is a positive constant  depending only on $N, p, \gamma, \overline R$ and $t$.
\end{lem}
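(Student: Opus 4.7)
The strategy is a Moser-type iteration along a finite ladder of exponents, combined with the anisotropic Hardy inequality (Theorem~\ref{thm:Hardy}) to absorb the singular lower order term and with the smallness assumption \eqref{eq:assumptionsmallaux2} to absorb the critical nonlinearity. Up to symmetry, fix $R<\overline R/8$ and a cutoff $\eta\in C^\infty_c(\mathcal{A}_R)$ with $\eta\equiv 1$ on $\mathcal{D}_R$ and $|\nabla\eta|\lesssim 1/R$. Testing \eqref{eq:weakDoublyCritical} with $\varphi=\eta^p u^q$ for a parameter $q>0$, then using Euler's identity \eqref{eq:EulerThm}, the bound \eqref{eq:BddH} on $\nabla H$, and a weighted Young inequality on the mixed cutoff term, I get a Caccioppoli-type estimate
$$c\,q\int \eta^p u^{q-1}H^p(\nabla u)\,dx \leq C_\varepsilon\int|\nabla\eta|^p u^{q+p-1}\,dx+\gamma\int\frac{\eta^p u^{q+p-1}}{[H^\circ(x)]^p}\,dx+\int \eta^p u^{q+p^*-1}\,dx.$$

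Set $\theta:=(q+p-1)/p$ and $v:=u^\theta$, so that the identity $H^p(\nabla v)=\theta^p u^{q-1}H^p(\nabla u)$ rewrites the left-hand side as $(cq/\theta^p)\int\eta^p H^p(\nabla v)\,dx$. Applying Theorem~\ref{thm:Hardy} to $\eta v$, together with Lemma~\ref{lem:a^p>b^p} to split $H^p(\nabla(\eta v))\leq (1+\varepsilon')H^p(\eta\nabla v)+C_{\varepsilon'}H^p(v\nabla\eta)$, allows me to absorb the Hardy term on the left-hand side provided
$$\frac{cq}{\theta^p}>\frac{\gamma(1+\varepsilon')}{C_H}.$$
A direct analysis of $f(\theta):=(p(\theta-1)+1)/\theta^p$ shows it is strictly decreasing on $[1,\infty)$ with $f(1)=1$, and the equation $f(\theta^*)=\gamma/C_H$ becomes, after multiplying by $(\theta^*)^p$ and substituting $\mu=(N-p)/(p\theta^*)$, exactly the algebraic identity \eqref{eq:algebraicHardy}; hence $\theta^*=(N-p)/(p\mu_1)$ and $p^*\theta^*=N/\mu_1$. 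The critical nonlinearity $\int u^{p^*-p}(\eta v)^p\,dx$ is estimated by H\"older as $\|u\|_{L^{p^*}(\mathcal{A}_R)}^{p^*-p}\|\eta v\|_{L^{p^*}}^p$, and by choosing $\sigma$ small in \eqref{eq:assumptionsmallaux2} together with the anisotropic Sobolev inequality on $\eta v$, this is absorbed as well.

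Tracking all constants, accounting for $|\nabla\eta|^p\lesssim R^{-p}$ and $|\mathcal{A}_R|\sim R^N$, this produces a single-step scale-invariant reverse H\"older estimate
$$\Big(\fint_{\mathcal{D}_R}u^{p^*\theta}\,dx\Big)^{1/(p^*\theta)}\leq \mathcal{C}\Big(\fint_{\mathcal{A}_R}u^{p\theta}\,dx\Big)^{1/(p\theta)},\qquad 1\leq\theta<\theta^*.$$
Starting from $\theta_0=p^*/p$ (so $p\theta_0=p^*$, matching the initial $L^{p^*}$ control), the map $\theta\mapsto p^*\theta/p$ at each iteration multiplies the exponent by $p^*/p>1$; since $p^*\theta^*=N/\mu_1$, for any given $t<N/\mu_1$ only finitely many iterations are required, with the last one tuned so that $p^*\theta\geq t$ while still keeping $\theta<\theta^*$. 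A concluding H\"older inequality on $\mathcal{D}_R$ then yields the desired $L^t$-bound; the case $R>8/\overline R$ is completely symmetric after interchanging $0$ and $\infty$.

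The main technical obstacle is the Hardy absorption in the middle step: the small losses from Young's inequality and from Lemma~\ref{lem:a^p>b^p} must be controlled so that the effective threshold in $\theta$ genuinely approaches $\theta^*=(N-p)/(p\mu_1)$ rather than some strictly smaller value, because otherwise exponents $t$ close to $N/\mu_1$ could not be reached. A secondary point is choosing the nested annuli $\mathcal{A}_R\supset \mathcal{D}_R$ wide enough to run the iteration with constants uniform in the iteration step (depending only on $N,p,\gamma,\overline R,t$), and extracting a scale-invariant statement free of the number of iterations performed.
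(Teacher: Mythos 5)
Your proposal follows essentially the same route as the paper: rescale to an annulus, test with an appropriate power of $u$ weighted by a cutoff, absorb the Hardy potential using Theorem~\ref{thm:Hardy} together with Lemma~\ref{lem:a^p>b^p}, absorb the critical term via the smallness hypothesis \eqref{eq:assumptionsmallaux2} and the Sobolev inequality, and iterate finitely many times. Your threshold analysis --- that $f(\theta)=(p(\theta-1)+1)/\theta^p$ decreases past $\theta=1$ and that $f(\theta^*)=\gamma/C_H$ becomes, under $\mu=(N-p)/(p\theta^*)$, exactly \eqref{eq:algebraicHardy} so that $p^*\theta^*=N/\mu_1$ --- is precisely how the paper identifies the exponent barrier, and your concern about controlling the Young and Lemma~\ref{lem:a^p>b^p} losses as $\theta\to\theta^*$ is the right one; the paper handles it by choosing $\varepsilon_i,\delta_i$ small after fixing $s$ so that $(p(s-1)+1)/s^p>\gamma/C_H$ survives.

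The one genuine gap is admissibility of the test function. For $q>1$, $\varphi=\eta^p u^q$ is not a priori legitimate: knowing only that $u\in\mathcal D^{1,p}(\R^N)\cap L^{p^*}$ does not give $u^{q-1}\nabla u\in L^p_{\mathrm{loc}}$ before higher integrability has been established, which is exactly what you are trying to prove. The paper deals with this via the standard truncation: it tests with $\varphi=\eta^p\hat u_m^{p(s-1)}\hat u$ where $\hat u_m=\min(\hat u,m)$, splits the annulus into $\{\hat u<m\}$ and $\{\hat u\geq m\}$, and performs the Hardy absorption separately on each piece. On the superlevel set $\{\hat u\geq m\}$ the Caccioppoli constant becomes $1$ rather than $p(s-1)+1$, but this is harmless since $\gamma/C_H<1$; the estimate then passes to the limit $m\to\infty$. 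Once you insert this truncation your sketch becomes a full proof of the lemma; without it, the Caccioppoli step and everything downstream is only a formal computation.
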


\begin{proof}
	It is easy to see that, setting $\hat u(x)=u(Rx)$, for $R>0$, 
	$$-\Delta_p^H \hat u - \gamma \frac{\hat u^{p-1}}{H^\circ(x)^p}=R^p\hat u^{p^*-1} \qquad \text{in } \mathcal{A}_1.$$
	Let $m \geq 1$ and set
	\begin{equation}\label{eq:integralSet}
		\mathcal{A}:= \mathcal{A}_1 \cap \{\hat u < m\} \quad \text{and} \quad \mathcal{B} := \mathcal{A}_1 \cap \{\hat u \geq m\}.
	\end{equation}
	Hence we can consider the weak formulation of the last equation as follows
	\begin{equation}\label{eq:weakforspecial}
		 \begin{split}
			I_1+I_2:=& \int_{\mathcal{A}} H^{p-1}(\nabla \hat u) \langle \nabla H(\nabla \hat u) ,  \nabla \varphi  \rangle  \, dx + \int_{\mathcal{B}} H^{p-1}(\nabla \hat u) \langle \nabla H(\nabla \hat u) ,  \nabla \varphi  \rangle  \, dx \\
			=&\int_{\mathcal{A}_1} \gamma \frac{\hat u^{p-1}}{H^\circ(x)^p}  \varphi \, dx + R^p\int_{\mathcal{A}_1} 	\hat u^{p^*-1} \varphi \, dx \qquad \forall \varphi \in C^\infty_c(\mathcal{A}_1).
		\end{split}
	\end{equation}
	Let define $\hat u_m:=\min(\hat u, m)$ for $m \geq 1$. By density argument, for any $\eta \in C^\infty_c(\mathcal{A}_1)$ it is possible to choose $\varphi=\eta^p \hat u_m^{p(s-1)}\hat u$, with $s\ge 1$, as test function in \eqref{eq:weakforspecial}, so that, using \eqref{eq:integralSet} and \eqref{eq:EulerThm}, we can compute $I_1$ and $I_2$ 
	\begin{equation}\label{eq:MoserI_1}
		\begin{split}
			I_1 =& \int_{\mathcal{A}} H^{p-1}(\nabla \hat u) \langle \nabla H(\nabla \hat u) , p \eta^{p-1} \hat u^{p(s-1)+1} \nabla \eta  \rangle  \, dx \\
			&+ [p(s-1)+1] \int_{\mathcal{A}} H^{p-1}(\nabla \hat u) \langle \nabla H(\nabla \hat u) ,  \eta^p \hat u^{p(s-1)} \nabla \hat u \rangle  \, dx\\
			=& p\int_{\mathcal{A}} H^{p-1}(\nabla \hat u) \langle \nabla H(\nabla \hat u) , \nabla \eta  \rangle \eta^{p-1} \hat u^{p(s-1)+1}  \, dx \\
			&+ [p(s-1)+1] \int_{\mathcal{A}} H^p(\nabla \hat u)  \eta^p \hat u^{p(s-1)}   \, dx.\\
		\end{split}
	\end{equation}
In the same way, we obtain 
\begin{equation}\label{eq:MoserI_2}
	\begin{split}
		I_2 =& \int_{\mathcal{B}} H^{p-1}(\nabla \hat u) \langle \nabla H(\nabla \hat u) , p \eta^{p-1} m^{p(s-1)} \hat u \nabla \eta  \rangle  \, dx \\
		&+  \int_{\mathcal{B}} H^{p-1}(\nabla \hat u) \langle \nabla H(\nabla \hat u) ,  \eta^p m^{p(s-1)} \nabla \hat u \rangle  \, dx\\
		=& p\int_{\mathcal{B}} H^{p-1}(\nabla \hat u) \langle \nabla H(\nabla \hat u) , \nabla \eta  \rangle \eta^{p-1}  m^{p(s-1)} \hat u \, dx  + \int_{\mathcal{B}} H^p(\nabla \hat u)  \eta^p m^{p(s-1)}   \, dx.\\
	\end{split}
\end{equation}
Collecting both \eqref{eq:MoserI_1}, \eqref{eq:MoserI_2}, using the Scwharz's inequality and recalling \eqref{eq:BddH} we obtain
\begin{equation}\label{eq:MoserStep2}
	\begin{split}
		[p&(s-1)+1] \int_{\mathcal{A}} H^p(\nabla \hat u)  \eta^p \hat u^{p(s-1)}   \, dx +\int_{\mathcal{B}} H^p(\nabla \hat u)  \eta^p m^{p(s-1)}   \, dx\\
		\leq&  p M \left[ \int_{\mathcal{A}} H^{p-1}(\nabla \hat u) \eta^{p-1} \hat u^{p(s-1)+1} |\nabla \eta|  \, dx + \int_{\mathcal{B}} H^{p-1}(\nabla \hat u)   \eta^{p-1}  m^{p(s-1)} |\nabla \eta| \hat u \, dx\right] \\
		&+\int_{\mathcal{A}_1} \gamma \frac{\hat u^{p-1}}{H^\circ(x)^p}  \varphi \, dx + R^p\int_{\mathcal{A}_1} \hat u^{p^*-1} \varphi \, dx\\
		=&  p M \left[ \int_{\mathcal{A}} H^{p-1}(\nabla \hat u) \eta^{p-1} \hat u^{(p-1)(s-1)} \hat u^s |\nabla \eta|  \, dx + \int_{\mathcal{B}} H^{p-1}(\nabla \hat u)   \eta^{p-1}  m^{(p-1)(s-1)}m^{s-1} |\nabla \eta| \hat u \, dx \right]  \\
		&+ \int_{\mathcal{A}_1} \gamma \frac{\hat u^{p-1}}{H^\circ(x)^p}  \varphi \, dx + R^p\int_{\mathcal{A}_1} 	\hat u^{p^*-1} \varphi \, dx.
	\end{split}
\end{equation}
Now we can apply the weighted Young's inequality to the first two terms in the right hand side of \eqref{eq:MoserStep2} with conjugate exponent $\left(p/(p-1), p\right)$ in order to obtain
\begin{equation}\label{eq:MoserStep3}
	\begin{split}
		[p(s-1)&+1] \int_{\mathcal{A}} H^p(\nabla \hat u)  \eta^p \hat u^{p(s-1)}   \, dx +\int_{\mathcal{B}} H^p(\nabla \hat u)  \eta^p m^{p(s-1)}   \, dx\\
		\leq&   \varepsilon_1 [p(s-1)+1] \int_{\mathcal{A}} H^p(\nabla \hat u) \eta^p \hat u^{p(s-1)} \, dx + \mathcal{C}_{\varepsilon_1}(p,s,M) \int_{\mathcal{A}} \hat u^{ps} |\nabla \eta|^p  \, dx\\
		& + \varepsilon_2 \int_{\mathcal{B}} H^p(\nabla \hat u)   \eta^p  m^{p(s-1)} \,dx + \mathcal{C}_{\varepsilon_2}(p,M) \int_{\mathcal{B}}|\nabla \eta|^p  m^{p(s-1)} \hat u^p \, dx  \\
		&+ \int_{\mathcal{A}_1} \gamma \frac{\hat u^{p-1}}{H^\circ(x)^p}  \varphi \, dx + R^p\int_{\mathcal{A}_1} 	\hat u^{p^*-1} \varphi \, dx,
	\end{split}
\end{equation}
where $\mathcal{C}_{\varepsilon_1}(p,s,M)$ and $\mathcal{C}_{\varepsilon_2}(p,M)$ are two positive constants.
Hence we obtain 
\begin{equation}\label{eq:MoserStep4}
\begin{split}
	(1&-\varepsilon_1)[p(s-1)+1] \int_{\mathcal{A}} H^p(\eta \hat u_m^{s-1}\nabla \hat u)     \, dx +(1-\varepsilon_2)\int_{\mathcal{B}} H^p(\eta \hat u_m^{s-1}\nabla \hat u)     \, dx\\
	=&(1-\varepsilon_1)[p(s-1)+1] \int_{\mathcal{A}} H^p(\nabla \hat u)  \eta^p \hat u^{p(s-1)}   \, dx +(1-\varepsilon_2)\int_{\mathcal{B}} H^p(\nabla \hat u)  \eta^p m^{p(s-1)}   \, dx\\
	\leq&  \mathcal{C}_{\varepsilon_1}(p,s,M) \int_{\mathcal{A}}  |\nabla \eta|^p \hat u^{p(s-1)} \hat u^p \, dx + \mathcal{C}_{\varepsilon_2}(p,M) \int_{\mathcal{B}}|\nabla \eta|^p  m^{p(s-1)} \hat u^p \, dx  \\
	&+ \int_{\mathcal{A}_1} \frac{\gamma}{H^\circ(x)^p} \hat u^{p-1} \eta^p \hat u_m^{p(s-1)}\hat u \, dx + R^p\int_{\mathcal{A}_1} \hat u^{p^*-1} \eta^p \hat u_m^{p(s-1)}\hat u \, dx.
		\end{split}
	\end{equation}
Thanks to \eqref{eq:FinTrIn} and applying Lemma \ref{lem:a^p>b^p}, with $a=H(\eta \hat u^{s-1}\nabla \hat u)$ and $b=H(\hat u^s\nabla \eta)$, we deduce that there hold the following inequalities in the sets $\mathcal{A}$ and $\mathcal{B}$ respectively:
\begin{equation}\label{eq:lemmaTypeYoungBis1}
\begin{split}
	\int_{\mathcal{A}} H^p(\eta \hat u_m^{s-1}\nabla \hat u)  \,dx &=\int_{\mathcal{A}} H^p(\eta \hat u^{s-1}\nabla \hat u)  \,dx\\
	& \geq \frac{1}{1+2^{p+1}\delta_1} \cdot \frac{1}{s^p} \int_{\mathcal{A}} H^p(\nabla (\eta \hat u^s)) \,dx - \mathcal{C}_{\delta_1} \int_{\mathcal{A}} H^p(\hat u^s \nabla \eta)\, dx\\
	&= \frac{1}{1+2^{p+1}\delta_1} \cdot \frac{1}{s^p} \int_{\mathcal{A}} H^p(\nabla (\eta \hat u_m^{s-1} \hat u)) \,dx - \mathcal{C}_{\delta_1} \int_{\mathcal{A}} H^p(\hat u_m^{s-1} \hat u\nabla \eta ) \, dx
\end{split}
\end{equation}
\begin{equation}\label{eq:lemmaTypeYoungBis2}
	\begin{split}
	\int_{\mathcal{B}} H^p(\eta \hat u_m^{s-1}\nabla \hat u)  \,dx 
	& =	\int_{\mathcal{B}} H^p(\eta m^{s-1}\nabla \hat u)  \,dx\\ 
	&\geq \frac{1}{1+2^{p+1}\delta_2} \int_{\mathcal{B}} H^p(\nabla (\eta m^{s-1} \hat u)) \,dx - \mathcal{C}_{\delta_2} \int_{\mathcal{B}} H^p(m^{s-1} \hat u \nabla \eta) \, dx\\
	&= \frac{1}{1+2^{p+1}\delta_2} \int_{\mathcal{B}} H^p(\nabla (\eta \hat u_m^{s-1} \hat u)) \,dx - \mathcal{C}_{\delta_2} \int_{\mathcal{B}} H^p(\hat u_m^{s-1} \hat u\nabla \eta ) \, dx
\end{split}
\end{equation}
By \eqref{eq:lemmaTypeYoungBis1} and \eqref{eq:lemmaTypeYoungBis2}, we obtain 
\begin{equation}\label{eq:MoserStep5}
	\begin{split}
		&\frac{1-\varepsilon_1}{1+2^{p+1}\delta_1} \cdot \frac{p(s-1)+1}{s^p} \int_{\mathcal{A}}  H^p(\nabla (\eta \hat u_m^{s-1} \hat u))     \, dx +\frac{1-\varepsilon_2}{1 + 2^{p+1}\delta_2}\int_{\mathcal{B}}  H^p(\nabla (\eta \hat u_m^{s-1} \hat u))    \, dx\\
		\leq& \mathcal{C}_{\delta_1}(\varepsilon_1,p,s) \int_{\mathcal{A}}  H^p(\hat u_m^{s-1} \hat u\nabla \eta ) \, dx + \mathcal{C}_{\delta_2}(\varepsilon_2) \int_{\mathcal{B}}  H^p( \hat u_m^{s-1} \hat u\nabla \eta ) \, dx\\
		&+  \mathcal{C}_{\varepsilon_1}(p,s,M) \int_{\mathcal{A}}  |\nabla \eta|^p \hat u_m^{p(s-1)} \hat u^p \, dx + \mathcal{C}_{\varepsilon_2}(p,M) \int_{\mathcal{B}}|\nabla \eta|^p  \hat u_m^{p(s-1)} \hat u^p \, dx  \\
		&+ \int_{\mathcal{A}_1} \frac{\gamma}{H^\circ(x)^p} \hat u^{p-1} \eta^p \hat u_m^{p(s-1)}\hat u \, dx + R^p\int_{\mathcal{A}_1} 	\hat u^{p^*-p} \eta^p \hat u_m^{p(s-1)}\hat u^p \, dx.
	\end{split}
\end{equation}
Using \eqref{eq:equivalentNorm}, we deduce that
\begin{equation}\label{eq:MoserStep6}
	\begin{split}
		&\frac{1-\varepsilon_1}{1+2^{p+1}\delta_1} \cdot \frac{p(s-1)+1}{s^p} \int_{\mathcal{A}}  H^p(\nabla (\eta \hat u_m^{s-1} \hat u))   \, dx +\frac{1-\varepsilon_2}{1 + 2^{p+1}\delta_2} \int_{\mathcal{B}}  H^p(\nabla (\eta \hat u_m^{s-1} \hat u))   \, dx\\
		\leq&  \int_{\mathcal{A}_1} \frac{\gamma}{H^\circ(x)^p} \hat u^{p-1} \eta^p \hat u_m^{p(s-1)}\hat u \, dx + \hat{\mathcal{C}} \int_{\mathcal{A}_1}|\nabla \eta|^p  \hat u_m^{p(s-1)} \hat u^p \, dx  + R^p\int_{\mathcal{A}_1} \hat u^{p^*-p} (\eta \hat u_m^{s-1}\hat u)^p \, dx,
	\end{split}
\end{equation}
where $\hat{\mathcal{C}}$ depends on $\delta_1, \delta_2, \varepsilon_1, \varepsilon_2, p, s, M$. Using Hardy's and H\"older's inequality in the right hand side of \eqref{eq:MoserStep6}, and the definition of the sets $\mathcal{A}$ and $\mathcal{B}$, we obtain
\begin{equation}\label{eq:MoserStep7}
	\begin{split}
		&\frac{1-\varepsilon_1}{1+2^{p+1}\delta_1} \cdot \frac{p(s-1)+1}{s^p} \int_{\mathcal{A}}  H^p(\nabla (\eta \hat u_m^{s-1} \hat u))     \, dx +\frac{1-\varepsilon_2}{1 + 2^{p+1}\delta_2}\int_{\mathcal{B}}  H^p(\nabla (\eta \hat u_m^{s-1} \hat u))     \, dx\\
		\leq&  \frac{\gamma}{C_H}\int_{\mathcal{A}}  H^p(\nabla (\eta \hat u_m^{s-1} \hat u)) \, dx + \frac{\gamma}{C_H} \int_{\mathcal{B}}  H^p(\nabla (\eta \hat u_m^{s-1} \hat u)) \, dx\\
		&+ \hat{\mathcal{C}} \int_{\mathcal{A}_1}|\nabla \eta|^p  \hat u_m^{p(s-1)} \hat u^p \, dx  + \left(\int_{\mathcal{A}_1}  \hat u^{p^*}	R^N \,dx\right)^{\frac{p^*-p}{p^*}} \left(\int_{\mathcal{A}_1} (\eta \hat u_m^{s-1}\hat u)^{p \chi}\, dx\right)^{\frac 1\chi},
	\end{split}
\end{equation}
where $\chi=p^*/p$. Finally, we deduce
\begin{equation}\label{eq:MoserStep8}
\begin{split}
	&\left(\frac{1-\varepsilon_1}{1+2^{p+1}\delta_1} \cdot  \frac{p(s-1)+1}{s^p} - \frac{\gamma}{C_H}\right) \int_{\mathcal{A}}  H^p(\nabla (\eta \hat u_m^{s-1} \hat u))     \, dx \\
	& \qquad +\left(\frac{1-\varepsilon_2}{1 + 2^{p+1}\delta_2} - \frac{\gamma}{C_H}\right)\int_{\mathcal{B}}  H^p(\nabla (\eta \hat u_m^{s-1} \hat u))  \, dx\\
	& \; \leq  \hat{\mathcal{C}} \int_{\mathcal{A}_1}|\nabla \eta|^p  \hat u_m^{p(s-1)} \hat u^p \, dx  +  \| u \|^{p^*-p}_{L^{p^*}(\mathcal{A}_R)} \left(\int_{\mathcal{A}_1} (\eta \hat u_m^{s-1}\hat u)^{p \chi}\, dx\right)^{\frac{1}{\chi}}.
\end{split}
\end{equation}
Noticing that $(p(s-1)+1)/s^p > \gamma/C_H$ for all $s \in \left( (N-p)/(p \mu_2), (N-p)(p \mu_1)\right)$, we can fix $\delta_1, \varepsilon_1>0$ sufficiently small such that 
$$\frac{1-\varepsilon_1}{1+2^{p+1}\delta_1} \cdot  \frac{p(s-1)+1}{s^p} - \frac{\gamma}{C_H}>0,$$
and $\delta_2, \varepsilon_2>0$ sufficiently small such that $$\frac{1-\varepsilon_2}{1 + 2^{p+1}\delta_2} - \frac{\gamma}{C_H} > 0.$$
Hence by \eqref{eq:MoserStep8} we get
\begin{equation}\label{eq:MoserStep9}
	\bar{\mathcal{C}} \int_{\mathcal{A}_1}  H^p(\nabla (\eta \hat u_m^{s-1} \hat u))  \, dx \leq  \hat{\mathcal{C}} \int_{\mathcal{A}_1}|\nabla \eta|^p  \hat u_m^{p(s-1)} \hat u^p \, dx  + \| u \|^{p^*-p}_{L^{p^*}(\mathcal{A}_R)} \left(\int_{\mathcal{A}_1} (\eta \hat u_m^{s-1}\hat u)^{p \chi}\, dx\right)^{\frac{1}{\chi}},
\end{equation}
where $\bar{\mathcal{C}}$ is a positive constant depending on $\varepsilon_1,\varepsilon_2,\delta_1,\delta_2, p,s,\gamma,C_H$.
In conclusion, by \eqref{eq:equivalentNorm} and Sobolev inequality we have
\begin{equation}\label{eq:MoserStep10}
\begin{split}
	\alpha_1^p \bar{\mathcal{C}} \mathcal{C}_S^p\left(\int_{\mathcal{A}_1} (\eta \hat u_m^{s-1}\hat u)^{p \chi}\, dx \right)^{\frac{1}{\chi}}  \leq & \alpha_1^p \bar{\mathcal{C}} \int_{\mathcal{A}_1} |\nabla (\eta \hat u_m^{s-1} \hat u)|^p  \, dx  \leq \bar{\mathcal{C}} \int_{\mathcal{A}_1}  H^p(\nabla (\eta \hat u_m^{s-1} \hat u))  \, dx \\
	\leq &  \hat{\mathcal{C}} \int_{\mathcal{A}_1}|\nabla \eta|^p  \hat u_m^{p(s-1)} \hat u^p \, dx   \\ & +  \| u \|^{p^*-p}_{L^{p^*}(\mathcal{A}_R)} \left(\int_{\mathcal{A}_1} (\eta \hat u_m^{s-1}\hat u)^{p \chi}\, dx \right)^{\frac{1}{\chi}}.
\end{split}
\end{equation}
In order to apply the Moser's iteration method we need to rewrite the last inequality as follows
\begin{equation}\label{eq:MoserStep11}
	\left(\int_{\mathcal{A}_1} (\eta \hat u_m^{s-1}\hat u)^{p \chi}\, dx \right)^{\frac{1}{\chi}} \leq  \mathcal{C}_1 \int_{\mathcal{A}_1}|\nabla \eta|^p  \hat u_m^{p(s-1)} \hat u^p \, dx  +  \mathcal{C}_2 \| u \|^{p^*-p}_{L^{p^*}(\mathcal{A}_R)} \left(\int_{\mathcal{A}_1} (\eta \hat u_m^{s-1}\hat u)^{p \chi}\, dx \right)^{\frac{1}{\chi}},
\end{equation}
where $\mathcal{C}_1:=\mathcal{\hat C}/(\alpha_1^p \bar{\mathcal{C}} \mathcal{C}_S^p)$,  $\mathcal{C}_2:=1/(\alpha_1^p \bar{\mathcal{C}} \mathcal{C}_S^p)$    and $\chi={p^*}/{p}$.

Fix $t\in (p^*,N/\mu_1)$ and $k\in \N$ so that $p\chi^k\le t\le p\chi ^{k+1}$. Then there exist positive constants $\mathcal C_1$ and $\mathcal C_2$ such that \eqref{eq:MoserStep11} holds for all $1\le s\le \min \left\{(N-p)/(p\mu_1),\chi^k\right\}$.  Now, we set  $\sigma
=(1 / (2\mathcal{C}_2))^\frac{1}{p^*-p}$ and choosing $\overline R$ sufficiently small such that \eqref{eq:assumptionsmallaux2} holds, we get 
\begin{equation}
	\left(\int_{\mathcal{A}_1} (\eta \hat u_m^{s-1}\hat u)^{p \chi}\, dx \right)^{\frac{1}{\chi}} \leq  \mathcal{C} \int_{\mathcal{A}_1}|\nabla \eta|^p  \hat u_m^{p(s-1)} \hat u^p \, dx  
\end{equation}
for all $1\le s\le \min \left\{(N-p)/(p\mu_1),\chi^k\right\}$ and $\mathcal C=\mathcal C_1/2.$ Applying Moser's iteration method (see e.g. \cite{HanLin,Xiang} for further details), we conclude, after finitely many times of iteration, 
\begin{equation}\label{abc}
	\left(\fint_{\mathcal{D}_1}  |\hat u|^t\, dx \right)^{\frac{1}{t}} \leq  \mathcal{C} \left(\fint_{\mathcal{A}_1}| \hat u|^{p^*} \, dx \right) ^{\frac{1}{p^*}}.
\end{equation}
for any $t\in (p^*,N/\mu_1).$
This proves \eqref{eq:EstAt0aux2}.

\end{proof}

Let us now prove the following:
\begin{thm}\label{teoo}
	Let $u$ be a weak solution of \eqref{eq:ProbDoublyCritical}. Then there exists a positive constant $C=C(N,p,\gamma,u)$ such that
	$$|u(x)|\leq \frac{C}{[H^\circ(x)]^{\frac{N-p}{p}-\sigma_1}} \qquad \text{in } \mathcal{B}_{R_1}^{H^\circ},$$
	and that
	$$|u(x)|\leq \frac{C}{{[H^\circ(x)]^{\frac{N-p}{p}+\sigma_2}}} \qquad \text{in } (\mathcal{B}_{R_2}^{H^\circ})^c,$$
	where $\sigma_1, \sigma_2$ are givem in Lemma \ref{lem:aux1} and  $ R_1, R_2 >0$ are constants depending on $N, p, \gamma$ and $u$.
\end{thm}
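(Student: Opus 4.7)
The plan is to combine the integral estimates of Lemma \ref{lem:aux1} with the reverse H\"older bound of Lemma \ref{aux2} and a standard local $L^\infty$ estimate for the anisotropic quasilinear operator. I describe the argument near the origin; the case at infinity is completely analogous.

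Fix $\bar R>0$ small enough so that the smallness conditions \eqref{eq:assumptionsmall} and \eqref{eq:assumptionsmallaux2} hold simultaneously (this is possible since $u\in L^{p^*}(\R^N)$), and pick an exponent $t\in(p^*,N/\mu_1)$. Let $R_1:=\bar R/16$ and, for $x_0\in\mathcal{B}_{R_1}^{H^\circ}$, set $R:=H^\circ(x_0)$, so that in particular $x_0\in\mathcal{D}_R$ and $\mathcal{A}_R\subset \mathcal{B}_{8R}^{H^\circ}$. Applying Lemma \ref{aux2} and then Lemma \ref{lem:aux1}, and using $|\mathcal{A}_R|\simeq R^N$, I obtain
\[
\left(\fint_{\mathcal{D}_R}u^t\,dx\right)^{1/t}\le\mathcal{C}\left(\fint_{\mathcal{A}_R}u^{p^*}\,dx\right)^{1/p^*}\le C\,R^{\sigma_1-(N-p)/p}.
\]

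To upgrade this $L^t$ control to a pointwise bound, I rewrite \eqref{eq:ProbDoublyCritical} on $\mathcal{D}_R$ as
\[
-\Delta_p^H u = V\,u^{p-1},\qquad V:=\frac{\gamma}{[H^\circ(x)]^p}+u^{p^*-p}.
\]
On $\mathcal{D}_R$ one has $H^\circ(x)\ge R/4$, so the Hardy coefficient is bounded by $\gamma(4/R)^p$; moreover the second summand lies in $L^{q}(\mathcal{D}_R)$ with $q:=t/(p^*-p)>N/p$ (strictly, because $t>p^*$). Hence $V\in L^q(\mathcal D_R)$ with $q>N/p$, and a Moser iteration on $\mathcal{D}_R$ in the same style as Lemma \ref{aux2}, but now \emph{without} the Hardy constraint $(p(s-1)+1)/s^p>\gamma/C_H$ (since the Hardy potential is treated as a bounded coefficient), yields the scale-invariant local $L^\infty$ estimate
\[
\|u\|_{L^\infty(\mathcal{B}_{2R}^{H^\circ}\setminus\mathcal{B}_{R/2}^{H^\circ})}\le C\left(\fint_{\mathcal{D}_R}u^t\,dx\right)^{1/t}.
\]
Scale invariance is obtained by the usual rescaling $\hat u(x)=u(Rx)$, exactly as in the proof of Lemma \ref{aux2}; the $R$-dependent prefactor of the Hardy coefficient becomes an absolute constant after this change of variables.

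Combining the two inequalities and evaluating at $x_0\in\mathcal{B}_{2R}^{H^\circ}\setminus\mathcal{B}_{R/2}^{H^\circ}$, I conclude
\[
u(x_0)\le C\,R^{\sigma_1-(N-p)/p}=\frac{C}{[H^\circ(x_0)]^{(N-p)/p-\sigma_1}},
\]
which is the desired near-origin estimate. The behaviour at infinity follows by running the same argument on the exterior annuli with $R_2$ large enough, using \eqref{eq:EstAtInfty} in place of \eqref{eq:EstAt0}. The main technical point is preserving scale invariance in the Moser passage from $L^t$ to $L^\infty$: since the Hardy potential is only bounded on $\mathcal{D}_R$ by a constant of order $R^{-p}$, one must carefully track its $R$-dependence through the dilation $\hat u(x)=u(Rx)$ so that the final multiplicative constant is independent of $R$; once this is done the rest is bookkeeping.
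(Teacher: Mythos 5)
Your approach is essentially the same as the paper's: rescale by $\hat u(x)=u(Rx)$, view the equation on the fixed annulus $\mathcal{D}_1$ as $-\Delta_p^H\hat u + c(x)\hat u^{p-1}=0$, invoke a Serrin/Moser local boundedness estimate with coefficient in $L^q$, $q=t/(p^*-p)>N/p$, and then feed in the $L^{p^*}$-decay from Lemma~\ref{lem:aux1}. However, you under-account for the source of $R$-dependence in the Moser constant. After the dilation, the Hardy coefficient $\gamma/H^\circ(x)^p$ is indeed trivially bounded on $\mathcal{D}_1$; the delicate term is the \emph{nonlinear} potential, which becomes $V_R(x)=R^p\hat u^{p^*-p}(x)$. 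The Moser constant depends on $\|V_R\|_{L^q(\mathcal{D}_1)}$, and it is not automatic that this stays bounded as $R\to 0$ (or $R\to\infty$): it requires the exact exponent cancellation
\[
p-\frac{N}{q}+N\frac{p^*-p}{t}-N\frac{p^*-p}{p^*}=0,
\]
combined with the reverse H\"older inequality of Lemma~\ref{aux2} and $u\in L^{p^*}(\R^N)$. This is precisely the computation carried out in \eqref{cnondipendedaR}, and without it the Moser constant $\mathbf{C}$ in \eqref{eq:MoserHanLin} could degenerate along the sequence of radii. Your closing remark attributes the ``scale-invariance bookkeeping'' entirely to the Hardy coefficient, which after rescaling is benign; the nontrivial uniformity lives in $V_R$, and this is the one step your writeup leaves unverified.
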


\begin{proof}
Let us fix $t := (p^*+ N/\mu_1)/{2} \in \left(p^*, N/\mu_1\right)$ as in Lemma \ref{aux2} and $\kappa := \min\{\tau, \sigma\}$, where $\tau$ and $\sigma$ are respectively as in Lemma \ref{lem:aux1} and Lemma \ref{aux2}. Let $\bar R > 0$ such that \eqref{eq:assumptionsmall} holds for $\kappa$ and let us consider
$\hat u(x)=u(Rx)$, for $R>0$ fixed. We note that $\hat u$ satisfies the equation
$$-\Delta_p^H \hat u + c(x) \hat u^{p-1}=0 \qquad \text{in } \mathcal{D}_1,$$
where $$c(x)=-\frac{\gamma}{H^\circ(x)^p}-R^p\hat u ^{p^*-p}(x).$$
We note that $\displaystyle H^\circ(x)^{-p}$ is bounded in $\mathcal{D}_1$ and $V_R(x):=R^p\hat u ^{p^*-p}(x)\in L^q(\mathcal D_1)$ with $q=t/(p^*-p)>N/p$ due to Lemma \ref{aux2}. Hence, as in the proof of \cite[Theorem $1$]{serrin} a classical Moser iteration argument yields 
\begin{equation}\label{eq:MoserHanLin}
	\sup_{\mathcal{B}^{H^\circ}_r(x)} \hat u \leq \mathbf{C} \left(\fint_{\mathcal{B}^{H^\circ}_{2r}(x)} \hat u^p \, dx\right)^\frac{1}{p}
\end{equation}
for any ball $\mathcal{B}^{H^\circ}_{2r}(x) \subset \mathcal{D}_1$, where $\mathbf{C}=\mathbf{C}(N,p,\gamma,\|V_R\|_{L^q(\mathcal D_1)})$. We claim that $\|V_R\|_{L^q(\mathcal D_1)}$ is uniformly bounded with respect to $R$. Indeed from Lemma \ref{aux2}, since 
$$p-\frac{N}{q}+N\frac{p^*-p}{t}-N\frac{p^*-p}{p^*}=0,$$
we get

\begin{equation}\label{cnondipendedaR}
    \begin{split}
        \left( \int_{\mathcal D_1}V_R^q \, dx\right)^{\frac{1}{q}}& = R^{p-\frac{N}{q}}\left( \int_{\mathcal D_R}  u^{(p^*-p)q} \, dx\right)^{\frac{1}{q}} \\ &\le \mathcal CR^{p-\frac{N}{q}+N\frac{p^*-p}{t}}\left( \fint_{\mathcal D_R}  u^{t} \, dx\right)^{\frac{p^*-p}{t}} \\ &\le \mathcal C R^{p-\frac{N}{q}+N\frac{p^*-p}{t}-N\frac{p^*-p}{p^*}}\left( \int_{\mathcal A_R}  u^t \, dx \right)^{\frac{p^*-p}{p^*}} \\ &\le \mathcal C \left(\int_{\R^N}u^{p^*} \, dx\right)^{\frac{p^*-p}{p^*}} \qquad \forall 0<R\le \frac{\overline R}{8}\text{ or } R\ge \frac{8}{\overline R},
    \end{split}
\end{equation}
where $\mathcal C$ is a positive constant depending on $N, p, \gamma, q$ and $\overline R$.

Using a covering argument we deduce that
\begin{equation}\label{eq:MoserHanLin2}
	\sup_{\mathcal{B}^{H^\circ}_2 \setminus \mathcal{B}^{H^\circ}_1} \hat u \leq \mathbf{C} \left(\fint_{\mathcal{D}_1} \hat u^p \, dx\right)^\frac{1}{p}
\end{equation}
Noticing that $\hat u(x)= u(Rx)$, by \eqref{eq:MoserHanLin2} we obtain that
\begin{equation}\label{eq:MoserHanLin3}
	\sup_{\mathcal{B}^{H^\circ}_{2R} \setminus \mathcal{B}^{H^\circ}_R} u \leq \mathbf{C} \left(\fint_{\mathcal{D}_R}  u^p \, dx\right)^\frac{1}{p}
\end{equation}
for each $0 < R \leq \bar R/8$ or $R \geq 8/\bar R$. By applying the H\"older's inequality in \eqref{eq:MoserHanLin3}, we get
\begin{equation}\label{eq:MoserHanLin4}
	\sup_{\mathcal{B}^{H^\circ}_{2R} \setminus \mathcal{B}^{H^\circ}_R} u \leq \mathbf{C} \left(\fint_{\mathcal{D}_R}  u^p \, dx \right)^\frac{1}{p} \leq \mathbf{C} \left(\fint_{\mathcal{D}_R}  u^{p^*} \, dx\right)^\frac{1}{p^*} = \mathbf{C} \|u\|_{L^{p^*}(\mathcal{D}_R)} R^{\frac{p-N}{p}},
\end{equation}
for each $0< R \leq \bar R/8$ or $R \geq 8/\bar R$ and $\mathbf{C}$ depends only on $N, p, \gamma, q, \bar R$ and $\| u \|_{L^{p^*} (\R^N)}$.

Now we note that, since $\mathcal{A}_R \subset \mathcal{B}^{H^\circ}_{\bar R}$ for any $0<R\leq \bar R/8$ and  $\mathcal{A}_R \subset (\mathcal{B}^{H^\circ}_{1/\bar R})^c$ for any $R \geq 8/\bar R$, there exist, by Lemma \ref{lem:aux1}, $\sigma_1, \sigma_2>0$ depending only on $N, p, \gamma$ such that
$$\|u\|_{L^{p^*}(\mathcal{B}^{H^\circ}_R)} \leq \mathcal{C} R^{\sigma_1}  \quad \text{for } 0 < R \leq \frac{\bar R}{8}$$
and that
$$\|u\|_{L^{p^*}((\mathcal{B}^{H^\circ}_R)^c)} \leq \frac{\mathcal{C}}{R^{\sigma_2}} \quad \text{for } R \geq 8/\bar R.$$ 
Now, if we set $R_1= \bar R/8$ and $R_2=8/\bar R$, by \eqref{eq:MoserHanLin4} we get the thesis.

\end{proof}

The next result is devoted to show the existence of some special supersolutions of our problem, in order to perform a comparison between them and the solutions of the doubly critical equation \eqref{eq:ProbDoublyCritical}.

\begin{prop} \label{prop:SubSuperComp}
	Given two constants $A>0$ and $\alpha < p$, there exist constants $0<\varepsilon,\delta<1$, depending on $N,p,\gamma,A,\alpha$, such that  
	\begin{equation}v(H^\circ(x))=\frac{1-\delta [H^\circ(x)]^\varepsilon}{[H^\circ(x)]^{\mu_1}} \in \mathcal{D}^{1,p}(\mathcal{B}^{H^\circ}_{R_1})
 \end{equation}

	is a positive supersolution to equation
	\begin{equation}-\Delta_p^H v - \frac{\gamma}{H^\circ(x)^p} v^{p-1} = g(x) v^{p-1} \quad \text{in } \mathcal{B}^{H^\circ}_{R_1},\end{equation}
	for some positive constant $0<R_1<1$ depending only on $N,p,\gamma, A$ and $\alpha$, where $g(x)$ is a positive function that belongs to $L^{\frac{N}{p}}(\mathcal{B}^{H^\circ}_{R_1})$ such that
	\begin{equation}\label{g1}
 g(x) \geq A[H^\circ(x)]^{-\alpha}\quad\text{in }\mathcal{B}^{H^\circ}_{R_1} .
 \end{equation}
	
	In a similar way, given $A>0$ and $\alpha > p$, there exist $0<\varepsilon, \delta <1$  such that
	\begin{equation}v(H^\circ(x))=\frac{1-\delta [H^\circ(x)]^{-\varepsilon}}{[H^\circ(x)]^{\mu_2}} \in \mathcal{D}^{1,p}((\mathcal{B}^{H^\circ}_{R_2})^c)\end{equation}
	is a positive supersolution to equation
	\begin{equation}-\Delta_p^H v - \frac{\gamma}{H^\circ(x)^p} v^{p-1} = g(x)v^{p-1} \quad \text{in } (\mathcal{B}^{H^\circ}_{R_2})^c,\end{equation}
	for some positive constant $R_2>1$ depending only on $N,p,\gamma$ and $\alpha$,  where $g(x)$ is a positive function that belongs to $L^{\frac{N}{p}}((\mathcal{B}^{H^\circ}_{R_2})^c)$ such that
	\begin{equation}\label{g2}
 g(x) \geq A[H^\circ(x)]^{-\alpha}\quad \text{in } (\mathcal{B}^{H^\circ}_{R_2})^c.
 \end{equation}
\end{prop}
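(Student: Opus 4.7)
The plan is to impose the explicit ansatz $v(x) = f(H^\circ(x))$, compute $-\Delta_p^H v - \gamma [H^\circ(x)]^{-p} v^{p-1}$ in closed form, and then simply define $g(x)$ as the quotient of this quantity by $v^{p-1}$; this will automatically make $v$ a solution (hence a supersolution) of the equation in the statement. The only preliminary ingredient is the radial-anisotropic formula
$$\Delta_p^H v = r^{1-N}\bigl(r^{N-1}|f'(r)|^{p-2}f'(r)\bigr)',\qquad r := H^\circ(x),$$
which follows from the $1$-homogeneity of $H$ together with identities \eqref{eq:PropFinsler}--\eqref{eq:PropFinsler2} (they give $H(\nabla v) = |f'(r)|$ and $\nabla H(\nabla v) = \mathrm{sgn}(f')\,x/r$) and from the fact that $\mathrm{div}(x/r) = (N-1)/r$, obtained from Euler's theorem applied to $H^\circ$.

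For the estimate near the origin I plug $f(r) = r^{-\mu_1}(1-\delta r^\varepsilon)$ into the above formula. Setting $\sigma := N-p-\mu_1(p-1)$ and $a := \mu_1 - \varepsilon$, a direct manipulation factors a common power of $r$ and yields
$$-\Delta_p^H v - \frac{\gamma}{[H^\circ(x)]^p}v^{p-1} = r^{-p-\mu_1(p-1)}\,B(r^\varepsilon),$$
where $B(t) := [\mu_1-\delta a t]^{p-2}\bigl\{\sigma[\mu_1-\delta a t] - (p-1)\delta a \varepsilon t\bigr\} - \gamma(1-\delta t)^{p-1}$. The algebraic equation \eqref{eq:algebraicHardy} is exactly $\gamma = \mu_1^{p-1}\sigma$, so $B(0)=0$; expanding one step further I expect
$$B'(0) = (p-1)\,\delta\,\varepsilon\,\mu_1^{p-2}\,\bigl(N-p-p\mu_1+\varepsilon\bigr),$$
which is strictly positive thanks to $\mu_1 < (N-p)/p$. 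Consequently $B(r^\varepsilon) = B'(0)r^\varepsilon + O(r^{2\varepsilon})$ as $r \to 0^+$.

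Next I set $g(x) := B(r^\varepsilon)\bigl(r^p(1-\delta r^\varepsilon)^{p-1}\bigr)^{-1}$, so that $v$ solves the equation by construction and is \emph{a fortiori} a supersolution. The pointwise bound $g(x) \ge A[H^\circ(x)]^{-\alpha}$ is equivalent to $B(r^\varepsilon) \ge A r^{p-\alpha}(1-\delta r^\varepsilon)^{p-1}$. Using $B(r^\varepsilon) \ge \tfrac{1}{2}B'(0) r^\varepsilon$ for $r$ small, the hypothesis $\alpha<p$ allows the choice $\varepsilon \in (0,\min\{1,p-\alpha\})$, any $\delta \in (0,1)$, and $R_1 > 0$ small enough (so also $v>0$ on $\mathcal{B}^{H^\circ}_{R_1}$) to conclude. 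The membership $v\in\mathcal{D}^{1,p}(\mathcal{B}^{H^\circ}_{R_1})$ comes from $\mu_1+1 < N/p$ (a consequence of $\mu_1 < (N-p)/p$), and $g \in L^{N/p}(\mathcal{B}^{H^\circ}_{R_1})$ comes from $\alpha<p$.

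The exterior case is handled by the same template with $f(r) = r^{-\mu_2}(1-\delta r^{-\varepsilon})$ and $r \to \infty$; the corresponding scalar function $\tilde B$ again satisfies $\tilde B(0) = 0$, and one finds $\tilde B'(0) = (p-1)\delta\varepsilon\mu_2^{p-2}(p\mu_2 - N + p + \varepsilon)>0$ thanks now to $\mu_2 > (N-p)/p$. The admissible window becomes $\varepsilon \in (0,\min\{1,\alpha-p\})$, non-empty because $\alpha>p$, and $R_2$ large enough closes the argument. The main technical point, in both regimes, is the leading-order cancellation forced by \eqref{eq:algebraicHardy}: without it, the sign of the residual would not be controllable. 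The Hardy exponents $\mu_1,\mu_2$ are thus precisely the right choice, and the strict inequalities $\mu_1 < (N-p)/p < \mu_2$ are exactly what guarantees the correct sign of the next-order coefficient.
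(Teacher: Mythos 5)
Your proposal is correct and follows essentially the same path as the paper's proof: the radial ansatz $f(H^\circ(x))$, reduction to a one--dimensional expression via the anisotropic identities \eqref{eq:PropFinsler}--\eqref{eq:PropFinsler2}, the leading-order cancellation forced by \eqref{eq:algebraicHardy}, positivity of the next-order coefficient via $\mu_1<(N-p)/p<\mu_2$, and then the choice of $\varepsilon$ from $\alpha<p$ (resp.\ $\alpha>p$). The one genuine — and welcome — difference is that you phrase the Taylor argument in the variable $s=r^{\varepsilon}$, i.e.\ you work with $B(s)=h(r)$ and compute $B'(0)$ with respect to $s$. The paper instead writes $h(t)$ directly in terms of $t$ and invokes ``$h'(0)$'' and the bound $2h'(0)t\ge h(t)\ge\tfrac12 h'(0)t$; since $h$ contains $t^\varepsilon$ with $\varepsilon$ possibly in $(0,1)$, this is not literally a derivative at $0$, and the comparison should really be against $t^{\varepsilon}$ (as the paper's own formula for $R_0$, with exponent $1/(p-\alpha-\varepsilon)$, indirectly confirms). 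Your reparametrization makes this step well-defined without changing the substance. One caveat you share with the paper: when $\gamma=0$ (so $\mu_1=0$) and $p>2$, the prefactor $\mu_1^{p-2}$ in $B'(0)$ vanishes, so the leading order of $B$ near $0$ is $s^{p-1}$ rather than $s$ and the argument as written breaks down; this degenerate case would need to be handled separately (it is easy, since then $v$ is bounded and the upper bound near the origin is immediate), but the paper glosses over it in the same way, so it is not a flaw specific to your write-up.
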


\begin{proof}
Let us consider $\mu, \delta, \varepsilon>0$ and let us define the function
$$u(x)=v(H^\circ(x))=\frac{1-\delta [H^\circ(x)]^\varepsilon}{[H^\circ(x)]^\mu}.$$
It is easy to deduce that
\begin{equation}
	\nabla u (x) = s(H^\circ(x)) \nabla H ^\circ(x),
\end{equation}
where $s(t):=t^{-\mu-1}[-\mu + \delta(\mu-\varepsilon)t^\varepsilon]$. Using \eqref{eq:PropFinsler}, we now compute
\begin{equation} \label{eq:computedSuperBDD}
\begin{split}
	-\Delta_p^H u =& -\operatorname{div} \left(H^{p-1} (\nabla u) \nabla H (\nabla u) \right) = - \operatorname{div} \left(|s(H^\circ(x))|^{p-1} \operatorname{sign}(s) \nabla H (\nabla H^\circ(x))\right) \\
	=& -(p-1) |s(H^\circ(x))|^{p-2} s'(H^\circ(x)) \langle \nabla H^\circ(x) , \nabla H(\nabla H^\circ(x)) \rangle\\
	&- |s(H^\circ(x))|^{p-1} \operatorname{sign}(s) \operatorname{div} \left(\nabla H (\nabla H^\circ(x))\right)\\
	=& -(p-1) |s(H^\circ(x))|^{p-2} s'(H^\circ(x)) - (N-1) \frac{|s(H^\circ(x))|^{p-2}s(H^\circ(x)) }{H^\circ(x)},
\end{split} 
\end{equation}
where in the last line we used the fact that $\langle \nabla H^\circ(x) , \nabla H(\nabla H^\circ(x)) \rangle =1$ and $$\displaystyle \operatorname{div}\left(\nabla H (\nabla H^\circ(x))\right) = \frac{N-1}{H^\circ(x)}$$
due to \eqref{eq:PropFinsler} and \eqref{eq:PropFinsler2}.

Making standard computations on the right hand side of \eqref{eq:computedSuperBDD}, one can deduce
\begin{equation} \label{eq:computedSuperBDD2}
	\begin{split}
		-\Delta_p^H u -\frac{\gamma}{[H^\circ(x)]^p} |u|^{p-2} u= g(H^\circ(x)) |u|^{p-2} u,
	\end{split} 
\end{equation}
where
$$g(t)= \frac{h(t)}{|1-\delta t^\varepsilon|^{p-2} (1-\delta t^\varepsilon) t^p},$$
and
\begin{equation}
\begin{split}
h(t):=&-\left|\mu - \delta (\mu - \varepsilon) t^\varepsilon\right|^{p-2} \left\{\mu^2(p-1) - (N-p) \mu - (p-1)(\mu-\varepsilon)^2 \delta t^\varepsilon + (N-p) (\mu - \varepsilon)\delta t^{\varepsilon} \right\} \\
&- \gamma |1-\delta t^\varepsilon|^{p-2}(1-\delta t^\varepsilon), \qquad \text{for } t \in [0,+\infty).
\end{split}
\end{equation}
We note that $h(0)= - |\mu|^{p-2}[\mu^2(p-1)-\mu(N-p)]-\gamma$.
Hence, using the definition of $\mu_1$ and $\mu_2$, we deduce that $h(0)=0$ when $\mu=\mu_1$ and $\mu=\mu_2$. Also we have $h'(0)>0$ if $\mu =\mu_1$, $\varepsilon>0$ or $\mu=\mu_2$, $\varepsilon<0$. This implies there exist $0<\delta_h<1$ such that \begin{equation}\label{basta}
2h'(0)t\ge h(t)\ge \frac{1}{2}h'(0)t>0 \qquad \forall t\in(0,\delta_h).
\end{equation}We set $\delta=\min \{\delta_h, 1/2\}$, $\varepsilon=(p-\alpha)/2$ and $$R_0=\left\{1,\left(\frac{\delta}{2A}h'(0)\right)^{1/(p-\alpha-\varepsilon)}\right\}.$$ 
It easy to check that $v(H^\circ(x))=(1-\delta [H^\circ(x)]^\varepsilon)[H^\circ(x)]^{-\mu_1} \in \mathcal{D}^{1,p}(\mathcal{B}^{H^\circ}_{R_1})$ is positive, which thanks to \eqref{basta}, $g(x)\in L^{\frac{N}{p}}(\mathcal{B}^{H^\circ}_{R_1}) $ and it satisfies \eqref{g1}. The other case is similar. 

\end{proof}

Now, we consider the following equation
\begin{equation}\label{eq:comparison}
	-\Delta_p^H w - \frac{\gamma}{H^\circ(x)^p}w^{p-1}=f(x) w^{p-1} \qquad \text{in } \Omega,
\end{equation}
where $\Omega$ is an open subset of $\R^N$, $w>0$ and $w \in \mathcal{D}^{1,p}(\Omega)$.
Let us start with a comparison principle in bounded domains.

The first result is given by the following pointwise estimate, in the same spirit of \cite{OSV, Xiang}.

\begin{prop}\label{prop:Comparison1}
Let $u,v$ two weakly differentiable strictly positive functions on a domain~$\Omega$\footnote{We mean that $u,v \geq C > 0$ a.e. in $\Omega$.}. We have that:
\begin{itemize}
	\item[\textbf{(i)}] if $p \geq 2$, then
\begin{equation}\label{eq:compaeisonDegenerate}
\begin{split}
	&H^{p-1}(\nabla u) \langle \nabla H (\nabla u),  \nabla \left(u - \frac{v^p}{u^p} u \right) \rangle + H^{p-1}(\nabla v) \langle \nabla H (\nabla v), \nabla \left(v - \frac{u^p}{v^p} v \right) \rangle \\
	& \geq C_{p}(u^p+v^p) H^p(\nabla (\ln u -\ln v)),
\end{split}
\end{equation}
for some positive constant $C_{p}$ depending only on $p$;

\

\item[\textbf{(ii)}] if $1<p<2$, then
\begin{equation}\label{eq:comparisonSingular}
	\begin{split}
		&H^{p-1}(\nabla u) \langle \nabla H (\nabla u),  \nabla \left(u - \frac{v^p}{u^p} u \right) \rangle + H^{p-1}(\nabla v) \langle \nabla H (\nabla v), \nabla \left(v - \frac{u^p}{v^p} v \right)\rangle \\
		& \geq C_p (u^p+v^p)\left[H(\nabla \ln u) + H(\nabla\ln v)\right]^{p-2}H^2(\nabla (\ln u -\ln v)),
	\end{split}
\end{equation}
for some positive constant $C_{p}$ depending only on $p$.
\end{itemize}
\end{prop}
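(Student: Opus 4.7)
The plan is to reduce both (i) and (ii) to the convexity estimates \eqref{introeq:ineqStandard} and \eqref{intro:eq:ineqStandardBis} of Proposition~\ref{prop:Lindqvist} by a single algebraic manipulation, after passing to the logarithmic gradients $\xi:=\nabla u/u$ and $\eta:=\nabla v/v$. The standing positivity assumption $u,v\ge C>0$ ensures that these are well-defined weakly differentiable vector fields, and by $1$-homogeneity of $H$ and $0$-homogeneity of $\nabla H$ one has $H(\nabla u)=u\,H(\xi)$ and $\nabla H(\nabla u)=\nabla H(\xi)$, with analogous identities for $v$.

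The computation begins by rewriting $u-(v^p/u^p)u=u-v^p/u^{p-1}$ and applying the chain rule to get
\begin{equation*}
\nabla\!\left(u-\frac{v^p}{u^{p-1}}\right)=\Bigl[1+(p-1)\frac{v^p}{u^p}\Bigr]\nabla u\;-\;p\,\frac{v^{p-1}}{u^{p-1}}\,\nabla v ,
\end{equation*}
and similarly for the second term with $u,v$ interchanged. Contracting these with $H^{p-1}(\nabla u)\nabla H(\nabla u)$ and $H^{p-1}(\nabla v)\nabla H(\nabla v)$ respectively, using Euler's identity \eqref{eq:EulerThm} to simplify $H^{p-1}(\nabla u)\langle\nabla H(\nabla u),\nabla u\rangle=H^p(\nabla u)$, and then reexpressing everything through the homogeneity relations above, the left-hand side of \eqref{eq:compaeisonDegenerate}/\eqref{eq:comparisonSingular} collapses to
\begin{equation*}
u^p\,A(\xi,\eta)+v^p\,A(\eta,\xi),\qquad A(\xi,\eta):=H^p(\xi)+(p-1)H^p(\eta)-p\,H^{p-1}(\eta)\,\langle\nabla H(\eta),\xi\rangle .
\end{equation*}

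The key observation is that $A(\xi,\eta)$ is precisely the excess quantity in the convexity estimates of Proposition~\ref{prop:Lindqvist}. Using \eqref{eq:EulerThm} to split $\langle\nabla H(\eta),\xi\rangle=\langle\nabla H(\eta),\xi-\eta\rangle+H(\eta)$, one recognizes the identity
\begin{equation*}
A(\xi,\eta)=H^p(\xi)-H^p(\eta)-p\,H^{p-1}(\eta)\,\langle\nabla H(\eta),\xi-\eta\rangle ,
\end{equation*}
which by \eqref{introeq:ineqStandard} satisfies $A(\xi,\eta)\ge \hat{C}(p)\,H^p(\xi-\eta)$ when $p\ge 2$, and by \eqref{intro:eq:ineqStandardBis} satisfies $A(\xi,\eta)\ge C_p\,[H(\xi)+H(\eta)]^{p-2}H^2(\xi-\eta)$ when $1<p<2$ (on $\{|\xi|+|\eta|>0\}$; both sides vanish on its complement). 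By symmetry and the evenness $H(-\zeta)=H(\zeta)$ the same lower bound applies to $A(\eta,\xi)$. Adding the two contributions and using $\xi-\eta=\nabla(\ln u-\ln v)$ yields \eqref{eq:compaeisonDegenerate} and \eqref{eq:comparisonSingular}.

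The delicate point is the algebraic identification of $A(\xi,\eta)$ with the convexity excess; this is essentially a one-line rearrangement using \eqref{eq:EulerThm}, after which everything is bookkeeping. No regularity beyond the $C^2$ assumption on $H$ away from the origin (already built into $(h_H)$) and the positivity of $u,v$ is needed.
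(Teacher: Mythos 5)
Your proof is correct and follows essentially the same route as the paper: both expand the weak derivatives of $u - v^p u^{1-p}$ and $v - u^p v^{1-p}$, pass to logarithmic gradients via the $1$-homogeneity of $H$ and $0$-homogeneity of $\nabla H$ together with Euler's identity, and then invoke the convexity inequalities \eqref{introeq:ineqStandard}/\eqref{intro:eq:ineqStandardBis} of Proposition~\ref{prop:Lindqvist}. The only (stylistic) difference is bookkeeping: you collect the left-hand side symmetrically into $u^p A(\xi,\eta)+v^p A(\eta,\xi)$ before bounding each factor, whereas the paper bounds each of the two summands separately and relies on the cancellation $H^p(\nabla u)-H^p(\nabla v)+H^p(\nabla v)-H^p(\nabla u)=0$ after adding.
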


\begin{proof}
Let $u,v$ two weakly differentiable positive functions and consider the following 
$$\psi_1:=u-\frac{v^p}{u^p}u \qquad \text{and} \qquad \psi_2:=v-\frac{u^p}{v^p}v.$$
Then, thanks to the Euler's theorem for 1-homogeneous functions, and since $\nabla H$ is 0-homogeneus, we deduce that 
\begin{equation}\label{eq:estimate1Finsl}
\begin{split}
H^{p-1}(\nabla u) \langle \nabla & H (\nabla u), \nabla \psi_1 \rangle =\\
&= H^p(\nabla u) - p \frac{v^{p-1}}{u^{p-1}} H^{p-1}(\nabla u) \langle \nabla H (\nabla u), \nabla v \rangle + (p-1) \frac{v^p}{u^p} H^p(\nabla u)\\
&= H^p(\nabla u) - v^p \left[(1-p)H^p\left(\frac{\nabla u}{u}\right) + p H^{p-1} \left(\frac{\nabla u}{u}\right)\langle \nabla H(\nabla u), \frac{\nabla v}{v} \rangle\right]\\
&= H^p(\nabla u) - v^p \left[H^p(\nabla \ln u) + p H^{p-1} (\nabla \ln u)\langle \nabla H(\nabla \ln u), \nabla \ln v - \nabla \ln u \rangle\right]
\end{split}
\end{equation}
and
\begin{equation}\label{eq:estimate2Finsl}
\begin{split}
H^{p-1}(\nabla v) \langle \nabla &H (\nabla v),  \nabla \psi_2 \rangle = \\
&= H^p(\nabla v) - p \frac{u^{p-1}}{v^{p-1}} H^{p-1}(\nabla v) \langle \nabla H (\nabla v), \nabla u \rangle + (p-1) \frac{u^p}{v^p} H^p(\nabla v )\\
&= H^p(\nabla v) - u^p \left[(1-p)H^p\left(\frac{\nabla v}{v}\right) + p H^{p-1} \left(\frac{\nabla v}{v}\right)\langle \nabla H(\nabla v), \frac{\nabla u}{u} \rangle\right]\\
&= H^p(\nabla v) - u^p \left[H^p (\nabla \ln v) + p H^{p-1} (\nabla \ln v)\langle \nabla H(\nabla \ln v), \nabla \ln u - \nabla \ln v \rangle\right].
\end{split}
\end{equation}
\textbf{(i) $p \geq 2$.} We recall that when $p \geq 2$, it holds \eqref{introeq:ineqStandard}, i.e.
\begin{equation} \label{eq:ineqStandard}
	H^p(\eta) \geq H^p(\eta') + p H^{p-1}(\eta') \langle \nabla H(\eta'), \eta -\eta' \rangle + \frac{1}{2^{p-1}-1} H^p(\eta - \eta'), \qquad \forall \eta, \eta' \in \R^N.
\end{equation}
Hence we can apply this inequality, in order to give an estimate from below for \eqref{eq:estimate1Finsl} and \eqref{eq:estimate2Finsl}:
\begin{equation}\label{eq:estimate3Finsl}
	\begin{split}
		H^{p-1}(\nabla u) \langle \nabla H &(\nabla u), \nabla \psi_1 \rangle =\\
		&= H^p(\nabla u) - v^p \left[H^p(\nabla \ln u) + p H^{p-1} (\nabla \ln u)\langle \nabla H(\nabla \ln u), \nabla \ln v - \nabla \ln u \rangle\right]\\
		& \geq H^p(\nabla u) - v^p \left[H^p(\nabla \ln v) - C_p H^p(\nabla(\ln v - \ln u))\right]\\
		& = H^p(\nabla u) -  H^p(\nabla v) + C_p v^p H^p(\nabla(\ln v - \ln u)),
	\end{split}
\end{equation}
\begin{equation}\label{eq:estimate4Finsl}
	\begin{split}
		H^{p-1}(\nabla v) \langle \nabla H &(\nabla v), \nabla \psi_2 \rangle =\\
		&= H^p(\nabla v) - u^p \left[H^p(\nabla \ln v) + p H^{p-1} (\nabla \ln v)\langle \nabla H(\nabla \ln v), \nabla \ln u - \nabla \ln v \rangle\right]\\
		& \geq H^p(\nabla v) - u^p \left[H^p(\nabla \ln u) - C_p H^p(\nabla(\ln u - \ln v))\right]\\
		& = H^p(\nabla v) -  H^p(\nabla u) + C_p u^pH^p(\nabla(\ln u - \ln v)).
	\end{split}
\end{equation}
Adding both these two inequalities, we obtain
\begin{equation}\label{eq:estimate5Finsl}
	\begin{split}
		H^{p-1}(\nabla u) \langle \nabla H (\nabla u), \nabla \psi_1 \rangle + H^{p-1}(\nabla v) \langle \nabla H (\nabla v), \nabla \psi_2 \rangle \geq C_p(u^p+v^p) H^p(\nabla(\ln u - \ln v)).
	\end{split}
\end{equation}

\textbf{(ii) $1 < p < 2$.} We recall that when $1 < p < 2$, it holds \eqref{intro:eq:ineqStandardBis}, i.e.
\begin{equation} \label{eq:ineqStandardBis}
	H^p(\eta) \geq H^p(\eta') + p H^{p-1}(\eta') \langle \nabla H (\eta'), \eta-\eta' \rangle + C_p [H(\eta) + H(\eta')]^{p-2}H^2(\eta - \eta'), \quad \forall \eta, \eta' \in \R^N,
\end{equation}
where $C_p$ is a positive constant depending only on $p$.

Now we proceed exactly as in the previous case to get an estimate from below for \eqref{eq:estimate1Finsl} and \eqref{eq:estimate2Finsl}:
\begin{equation}\label{eq:estimate6Finsl}
	\begin{split}
		H^{p-1}(\nabla u) &\langle \nabla H (\nabla u), \nabla \psi_1 \rangle =\\
		&= H^p(\nabla u) - v^p \left[H^p(\nabla \ln u) + p H^{p-1} (\nabla \ln u)\langle \nabla H(\nabla \ln u), \nabla \ln v - \nabla \ln u \rangle\right]\\
		& \geq H^p(\nabla u) - v^p \left[H^p(\nabla \ln v) - C_p [H(\nabla \ln u) + H(\nabla \ln v)]^{p-2}H^2(\nabla(\ln v - \ln u))\right]\\
		& = H^p(\nabla u) -  H^p(\nabla v) + C_p v^p \left[[H(\nabla \ln u) + H(\nabla \ln v)]^{p-2} H^2(\nabla(\ln v - \ln u)\right],
	\end{split}
\end{equation}
\begin{equation}\label{eq:estimate7Finsl}
	\begin{split}
		H^{p-1}(\nabla v) &\langle \nabla H (\nabla v), \nabla \psi_2 \rangle =\\
		&= H^p(\nabla v) - u^p \left[H^p(\nabla \ln v) + p H^{p-1} (\nabla \ln v)\langle \nabla H(\nabla \ln v), \nabla \ln u - \nabla \ln v \rangle\right]\\
		& \geq H^p(\nabla v) - u^p \left[H^p(\nabla \ln u) - C_p [H(\nabla \ln u) + H(\nabla \ln v)]^{p-2}H^2(\nabla(\ln v - \ln u))\right]\\
		& = H^p(\nabla v) -  H^p(\nabla u) +C_p u^p[H(\nabla \ln u) + H(\nabla \ln v)]^{p-2}H^2(\nabla(\ln v - \ln u)).
	\end{split}
\end{equation}
Adding both these two inequalities, we obtain
\begin{equation}\label{eq:estimate8Finsl}
	\begin{split}
		H^{p-1}(\nabla u) \langle \nabla H (\nabla u), \nabla \psi_1 \rangle &+ H^{p-1}(\nabla v) \langle \nabla H (\nabla v), \nabla \psi_2 \rangle \\
		&\geq C_p (u^p+v^p)[H(\nabla \ln u) + H(\nabla \ln v)]^{p-2}H^2(\nabla(\ln v - \ln u)).
	\end{split}
\end{equation}

\end{proof}

Now, we are ready to prove the comparison principles in bounded and exteriors domains.

\begin{prop} \label{prop:Comparison2}
Let $\Omega$ be an open bounded smooth domain of $\R^N$ and $f\in L^{\frac{N}{p}}(\Omega)$. Let $u \in \mathcal {D} ^{1,p}(\Omega)$ be a weak positive subsolution to \eqref{eq:comparison} and $v\in \mathcal D^{1,p}(\Omega)$ be a weak positive supersolution of 
\begin{equation}\label{eq:comparison2}
	-\Delta_p^H v - \frac{\gamma}{H^\circ(x)^p}v^{p-1}=g(x) v^{p-1} \qquad \text{in } \Omega,
\end{equation}
with $g\in L^{\frac{N}{p}}(\Omega)$.
Assume that $\inf_\Omega v >0$ and $f\le g$ in $\Omega.$
If $u \leq v$ on $\partial \Omega$, then $$u \leq v \quad \text{in } \Omega.$$ 
\end{prop}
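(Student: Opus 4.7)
\textbf{Proof plan for Proposition~\ref{prop:Comparison2}.} The plan is to run a Picone/Díaz–Saá type argument based on the pointwise inequalities of Proposition~\ref{prop:Comparison1}, exploiting the anisotropic Picone test functions
\[
\psi_1:=u-\frac{v^p}{u^{p-1}},\qquad \psi_2:=v-\frac{u^p}{v^{p-1}}.
\]
Setting $\Omega^{+}:=\{x\in\Omega:u(x)>v(x)\}$, we have $\psi_1>0$ and $\psi_2<0$ on $\Omega^{+}$, while $\psi_1=\psi_2=0$ on $\{u=v\}$. Since $\inf_\Omega v>0$ and $u,v\in\mathcal D^{1,p}(\Omega)$, the truncated functions $\psi_1^{+}=\psi_1\chi_{\Omega^{+}}$ and $\psi_2^{-}=\psi_2\chi_{\Omega^{+}}$ are admissible test functions (they vanish on $\partial\Omega$ because there $u\le v$, so $\psi_1\le 0$ and $\psi_2\ge 0$ on $\partial\Omega$); by a standard truncation/density argument they can be inserted in the weak subsolution inequality for $u$ and the weak supersolution inequality for $v$ respectively.

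First I would test the subsolution inequality for $u$ with $\psi_1^{+}$ and the supersolution inequality for $v$ with $-\psi_2^{-}\geq 0$, and then add the two. A direct algebraic computation gives
\[
u^{p-1}\psi_1+v^{p-1}\psi_2=(u^p-v^p)+(v^p-u^p)=0,
\]
so the Hardy term, which is the singular obstruction, cancels identically on $\Omega^{+}$ (note in particular that we never need to invoke Theorem~\ref{thm:Hardy} here). On the right-hand side we are left with
\[
\int_{\Omega^{+}}\bigl(fu^{p-1}\psi_1+gv^{p-1}\psi_2\bigr)\,dx=\int_{\Omega^{+}}(f-g)(u^p-v^p)\,dx\le 0,
\]
by the assumption $f\le g$ and the fact that $u^p>v^p$ on $\Omega^{+}$.

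On the left-hand side, Proposition~\ref{prop:Comparison1} applies on $\Omega^{+}$: when $p\ge 2$ it yields
\[
\int_{\Omega^{+}}C_p(u^p+v^p)H^p(\nabla\ln u-\nabla\ln v)\,dx\le 0,
\]
and when $1<p<2$ the analogous lower bound with the factor $[H(\nabla\ln u)+H(\nabla\ln v)]^{p-2}H^2(\nabla(\ln u-\ln v))$. Since $u^p+v^p>0$ and the integrand is non-negative, we conclude $\nabla(\ln u-\ln v)\equiv 0$ a.e. on $\Omega^{+}$, so $u/v$ is constant on each connected component of $\Omega^{+}$.

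Finally I would rule out the constant being $>1$: every connected component $U$ of $\Omega^{+}$ satisfies $u=v$ on $\partial U\cap\Omega$ (by continuity of $u-v$) and $u\le v$ on $\partial U\cap\partial\Omega$ (by hypothesis); in either case, the trace of $u/v$ on $\partial U$ is $\le 1$, while inside $U$ it equals some constant $c>1$, a contradiction. Hence $\Omega^{+}$ is empty and $u\le v$ in $\Omega$. \emph{The main obstacle} is making the test functions rigorous — verifying $\psi_1^{+},\psi_2^{-}\in W_0^{1,p}(\Omega)$ with the required integrability against the weights $H^\circ(x)^{-p}$, $f$, $g$ — which is handled by approximating $\psi_1^{+}$ with $\min(\psi_1^{+},k)\chi_{\{\mathrm{dist}(\cdot,\partial\Omega)>1/k\}}$, using $\inf_\Omega v>0$ to control denominators, and using $u,v\in L^{p^{*}}(\Omega)$ together with $f,g\in L^{N/p}(\Omega)$ to pass to the limit.
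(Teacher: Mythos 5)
Your overall strategy is the right one and matches the paper's in spirit (Picone-type test functions, exact cancellation of the Hardy term, the anisotropic pointwise inequality of Proposition~\ref{prop:Comparison1}, then a rigidity/boundary argument), but there is a genuine gap in the admissibility of your test functions. The function $-\psi_2^- = \dfrac{(u^p-v^p)^+}{v^{p-1}}$ is in general \emph{not} in $W^{1,p}(\Omega)$: on $\Omega^+=\{u>v\}$ the ratio $u/v$ is unbounded, and expanding $\nabla\psi_2$ produces the term $p\,u^{p-1}v^{1-p}\nabla u$, which with only $u\in L^{p^*}$, $\nabla u\in L^p$ lies in $L^{N/(N-p+1)}$ but not in $L^p$ (for example $p=2$, $N=3$ gives exponent $3/2<2$). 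So you cannot plug $\psi_2\chi_{\Omega^+}$ directly into the weak formulation for $v$; the ``standard truncation/density argument'' you invoke has to be made precise, and that is exactly where the difficulty lies.

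Your proposed patch --- approximating $\psi_1^+$ by $\min(\psi_1^+,k)\chi_{\{\mathrm{dist}(\cdot,\partial\Omega)>1/k\}}$ (and presumably analogously for $\psi_2^-$) --- does not work either, because once you truncate the two test functions separately at the same level $k$ the identity $u^{p-1}\psi_1 + v^{p-1}\psi_2 = 0$ is destroyed, and the singular Hardy term no longer cancels; since $H^\circ(x)^{-p}\notin L^{N/p}$ the Hardy term cannot then be absorbed the way $f$ and $g$ are. The paper's way around both problems at once is to truncate the \emph{common numerator}: use
\[
\eta_1=\frac{\min\{(u^p-v^p)^+,m\}}{u^{p-1}},\qquad
\eta_2=\frac{\min\{(u^p-v^p)^+,m\}}{v^{p-1}}.
\]
These keep $u^{p-1}\eta_1-v^{p-1}\eta_2\equiv 0$ (so the Hardy term still vanishes exactly), and on the set $\{0<u^p-v^p<m\}$ one has $u/v\le(1+m/(\inf_\Omega v)^p)^{1/p}$, which together with $\inf_\Omega v>0$ gives $\eta_1,\eta_2\in W^{1,p}_0(\Omega)\cap L^\infty(\Omega)$. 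The price is the extra region $\Omega_2=\{u^p-v^p\ge m\}$, whose contribution is controlled by $\int_{\{u^p\ge m\}}H^p(\nabla u)\,dx\to 0$ as $m\to\infty$; this limit step, absent from your sketch, is needed to arrive at $\int_{\{u\ge v\}}(u^p+v^p)\,H^p(\nabla(\ln u-\ln v))\,dx=0$ and conclude $u=Kv$ on $\{u\ge v\}$ with $K=1$.
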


\begin{proof}
	We will give the proof of this result in the case $p \geq 2$. The case $1<p<2$ is similar.
	
	Let us define
	$$ \eta_1:=\frac{\min\{(u^p-v^p)^+,m\}}{u^{p-1}} \qquad \text{and} \qquad \eta_2:=\frac{	\min\{(u^p-v^p)^+,m\}}{v^{p-1}}$$
	where $m>1$. It is quite standard to show that $\eta_1$ and $\eta_2$ are good test function that we can use in the weak formulations of \eqref{eq:comparison} and \eqref{eq:comparison2}. Taking both these test function and subtracting the two equations, we obtain
	\begin{equation}
	\begin{split}
		\int_\Omega H^{p-1}(\nabla u) \langle \nabla H (\nabla u), \nabla \eta_1 \rangle \, dx &- \int_\Omega H^{p-1}(\nabla v) \langle \nabla H (\nabla v), \nabla \eta_2 \rangle \, dx\\
		\le & \int_\Omega \frac{\gamma}{H^\circ(x)^p}u^{p-1} \eta_1 \, dx - \int_\Omega \frac{\gamma}{H^\circ(x)^p}v^{p-1} \eta_2 \, dx \\
		&+ \int_\Omega f(x)u^{p-1} \eta_1 \,dx - \int_\Omega g(x) v^{p-1} \eta_2 \, dx\\
		=& \int_\Omega \frac{\gamma}{H^\circ(x)^p} \left(u^{p-1}\eta_1-v^{p-1}\eta_2\right) \, dx\\
		&+ \int_\Omega f(x) u^{p-1} \eta_1 -g(x)v^{p-1} \eta_2 \,dx \le 0,
	\end{split}
	\end{equation}
since $f\le g$ in $\Omega$.
Hence, setting  $\Omega_1 := \{x \in \Omega \ | \ 0 \leq u^p-v^p \leq m\}$ and $\Omega_2:= \{x \in \Omega \ | \ u^p-v^p \geq m\}$, we deduce that
\begin{equation}\label{eq:computationsComparison1}
	\begin{split}
		&\int_{\Omega_1} H^{p-1}(\nabla u) \langle \nabla H (\nabla u), \nabla \left(u-\frac{v^p}{u^p}u \right) \rangle \, dx +\int_{\Omega_1} H^{p-1}(\nabla v) \langle \nabla H (\nabla v), \nabla \left(v-\frac{u^p}{v^p}v \right)\rangle \, dx \\
		&+ m(1-p) \left[ \int_{\Omega_2} H^{p-1}(\nabla u) \langle \nabla H (\nabla u), u^{-p}\nabla u \rangle \, dx - \int_{\Omega_2} H^{p-1}(\nabla v) \langle \nabla H (\nabla v), v^{-p}\nabla v\rangle \, dx \right]\le 0.
	\end{split}
\end{equation}
Applying \eqref{eq:compaeisonDegenerate} in \eqref{eq:computationsComparison1} and making some computations we obtain
\begin{equation}\label{eq:computationsComparison2}
	\begin{split}
		C_p &\int_{\Omega_1} (u^p+v^p)H^p(\nabla (\ln u -\ln v)) \, dx \\
		&+ m(p-1) \left[\int_{\Omega_2} H^{p}(\nabla \ln v) \, dx - \int_{\Omega_2}  H^{p}(\nabla \ln u) \, dx \right]\leq 0,
	\end{split}
\end{equation}
but this implies that
\begin{equation}\label{eq:computationsComparison3}
	\begin{split}
		C_p \int_{\Omega_1} (u^p+v^p)H^p(\nabla (\ln u -\ln v)) \, dx \leq m(p-1) \int_{\Omega_2}  H^{p}(\nabla \ln u) \, dx.
	\end{split}
\end{equation}
For the right hand side of \eqref{eq:computationsComparison3}, we have
\begin{equation}\label{eq:RHScomputationsComparison3}
	\begin{split}
	m(p-1) \int_{\Omega_2}  H^{p}(\nabla \ln u) \, dx = (p-1)	\int_{\Omega_2} mu^{-p} H^p(\nabla u) \, dx \leq (p-1)\int_{\Omega_2'} H^p(\nabla u) \, dx,
	\end{split}
\end{equation}
where $\Omega_2':=\{x \in \Omega \ | \ u^p \geq m\}$ and it holds that  $$\displaystyle \int_{\Omega_2'} H^p(\nabla u) \, dx \rightarrow 0\qquad \text{for } m \rightarrow + \infty.$$
 Hence, passing to the limit for $m \rightarrow + \infty$ in \eqref{eq:computationsComparison3} we obtain that
\begin{equation}\label{eq:computationsComparison4}
	\begin{split}
		\int_{\{u^p-v^p \geq 0\}} (u^p+v^p)H^p(\nabla (\ln u -\ln v)) \, dx =0,
	\end{split}
\end{equation}
which clearly implies that
$$ u = K v \qquad \text{in the set } \{x \in \Omega \ | \ u(x) \geq v(x)\},$$
for some positive constant $K$. By our assumptions $\inf_{x \in \Omega} v > 0$ and $ u \leq v$ on $\partial \Omega$, hence it follows that $K=1$. But this implies that
$$ u \leq v \qquad \text{in } \Omega$$
and this complete the proof of this result in the case $p \geq 2$. The case $1<p<2$ follows repeating verbatim the proof of the case $p\ge 2$, but applying inequality \eqref{eq:comparisonSingular} instead of \eqref{eq:compaeisonDegenerate}.
\	
\end{proof}

Now we want to prove the corresponding result of Proposition \ref{prop:Comparison2} in exterior domains.

\begin{prop}\label{prop:Comparison3}
Let $\Omega$ be an exterior domain such that $\R^N \setminus \Omega$ is bounded and $f\in L^{\frac{N}{p}}(\Omega)$. Let $u \in \mathcal D^{1,p}(\Omega) $ be a weak positive subsolutions to \eqref{eq:comparison} and $v\in \mathcal D^{1,p}(\Omega)$ be a positive supersolution of

\begin{equation}\label{eq:comparison3}
	-\Delta_p^H v - \frac{\gamma}{H^\circ(x)^p}v^{p-1}=g(x) v^{p-1} \qquad \text{in } \Omega,
\end{equation} 
with $g\in L^{\frac{N}{p}}(\Omega)$.
Assume that $\inf_\Omega v >0$ and $f\le g$ in $\Omega$.
If $u \leq v$ on $\partial \Omega$ and
\begin{equation}\label{logAss}
	\limsup_{R \rightarrow + \infty} \frac{1}{R} \int_{{B}_{2R} \setminus {B}_R} u^p |\nabla \log v|^{p-1}=0,
\end{equation}
then $$u \leq v \quad \text{in } \Omega.$$ 
\end{prop}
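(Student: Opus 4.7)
The plan is to adapt the proof of Proposition \ref{prop:Comparison2} to the unbounded setting by inserting a smooth cutoff. Fix $R>0$ large enough that $\R^N\setminus\Omega\subset B_{R/2}$ and pick $\varphi_R\in C_c^\infty(\R^N)$ with $\varphi_R\equiv 1$ on $B_R$, $\operatorname{supp}\varphi_R\subset B_{2R}$ and $|\nabla\varphi_R|\le 2/R$. Use
\begin{equation*}
\eta_1=\varphi_R^p\,\frac{\min\{(u^p-v^p)^+,m\}}{u^{p-1}},\qquad \eta_2=\varphi_R^p\,\frac{\min\{(u^p-v^p)^+,m\}}{v^{p-1}},
\end{equation*}
as test functions in \eqref{eq:comparison} and \eqref{eq:comparison3}; these are admissible because $\varphi_R$ has compact support, $\inf_\Omega v>0$ keeps $1/v^{p-1}$ bounded, and $u\le v$ on $\partial\Omega$ forces $\eta_i=0$ there. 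Subtracting the two equations and using $f\le g$, all source and Hardy terms combine into a nonpositive right-hand side exactly as in Proposition \ref{prop:Comparison2} (noting that $u^{p-1}\eta_1=v^{p-1}\eta_2$).

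Splitting $\nabla\eta_i=p\varphi_R^{p-1}w_i\nabla\varphi_R+\varphi_R^p\nabla w_i$, the ``bulk'' pieces $\varphi_R^p\nabla w_i$ reproduce the algebra of the bounded case with an extra $\varphi_R^p$ factor, so Proposition \ref{prop:Comparison1} provides the lower bound
\begin{equation*}
C_p\int_{\Omega_1}\varphi_R^p(u^p+v^p)\,H^p(\nabla\log u-\nabla\log v)\,dx,
\end{equation*}
modulo the term $m(p-1)\int_{\Omega_2'}\varphi_R^p H^p(\nabla\log u)\,dx$ that vanishes as $m\to\infty$ since $u\in \mathcal D^{1,p}(\Omega)$. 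What is genuinely new is the cutoff remainder
\begin{equation*}
E_R=p\int_{B_{2R}\setminus B_R}\varphi_R^{p-1}\Bigl[H^{p-1}(\nabla u)\langle\nabla H(\nabla u),\nabla\varphi_R\rangle\,w_1-H^{p-1}(\nabla v)\langle\nabla H(\nabla v),\nabla\varphi_R\rangle\,w_2\Bigr]\,dx.
\end{equation*}

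Using $|\nabla H|\le M$ from \eqref{eq:BddH} together with the pointwise bounds $w_1\le u$ and $w_2\le u^p/v^{p-1}$ valid on $\{u\ge v\}$ (where the integrand is nonzero), and the norm equivalence \eqref{eq:equivalentNorm} to rewrite $H^{p-1}(\nabla v)/v^{p-1}=H^{p-1}(\nabla\log v)\le C|\nabla\log v|^{p-1}$, we get
\begin{equation*}
|E_R|\le \frac{C}{R}\int_{B_{2R}\setminus B_R}u\,H^{p-1}(\nabla u)\,dx+\frac{C}{R}\int_{B_{2R}\setminus B_R}u^p\,|\nabla\log v|^{p-1}\,dx.
\end{equation*}
The second summand vanishes as $R\to\infty$ precisely by hypothesis \eqref{logAss}. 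For the first summand, H\"older's inequality and the anisotropic Hardy inequality (Theorem \ref{thm:Hardy}) yield
\begin{equation*}
\frac{1}{R}\int_{B_{2R}\setminus B_R}u\,H^{p-1}(\nabla u)\,dx\le \frac{1}{R}\|u\|_{L^p(B_{2R}\setminus B_R)}\|H(\nabla u)\|_{L^p(B_{2R}\setminus B_R)}^{p-1}\le C\|H(\nabla u)\|_{L^p(B_{2R}\setminus B_R)}^{p-1},
\end{equation*}
where we used $H^\circ(x)\sim R$ on the annulus and Hardy to bound $\|u\|_{L^p(B_{2R}\setminus B_R)}\le CR\|H(\nabla u)\|_{L^p(\Omega)}$; the remaining factor vanishes since $u\in \mathcal D^{1,p}(\Omega)$.

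Letting first $m\to\infty$ and then $R\to\infty$ we arrive at
\begin{equation*}
\int_{\{u\ge v\}}(u^p+v^p)\,H^p(\nabla\log u-\nabla\log v)\,dx=0,
\end{equation*}
so $\nabla\log(u/v)\equiv 0$ on $\{u\ge v\}$. As in the bounded case, the boundary condition $u\le v$ on $\partial\Omega$, the strict positivity $\inf_\Omega v>0$, and continuity at the topological boundary of $\{u\ge v\}$ force $u\equiv v$ on each component of this set, whence $u\le v$ in $\Omega$. The case $1<p<2$ follows verbatim using \eqref{eq:comparisonSingular} in place of \eqref{eq:compaeisonDegenerate}. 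The principal obstacle is the careful handling of $E_R$: its $v$-derivative part is exactly what motivates hypothesis \eqref{logAss}, while its $u$-derivative part is absorbed by combining energy finiteness with the Hardy inequality.
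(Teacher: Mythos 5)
Your proposal is correct and follows essentially the same strategy as the paper's proof: a cutoff $\varphi_R$ on the annulus $B_{2R}\setminus B_R$, the truncation at level $m$ with the induced split of $\Omega$, Proposition~\ref{prop:Comparison1} for the bulk term, and hypothesis~\eqref{logAss} together with finite energy of $u$ for the cutoff remainders, letting $m\to\infty$ and then $R\to\infty$. The only point to tighten is your treatment of the $u$-part of $E_R$: you invoke the Hardy inequality for $u\in\mathcal D^{1,p}(\Omega)$ on the exterior domain $\Omega$, whereas Theorem~\ref{thm:Hardy} is stated on $\R^N$; the paper instead applies a three-factor H\"older inequality with exponents $\bigl(N,\,p/(p-1),\,p^*\bigr)$ producing $\|u\|_{L^{p^*}(B_{2R}\setminus B_R)}$ directly, and you can achieve the same by replacing your Hardy step with the elementary bound $\|u\|_{L^p(B_{2R}\setminus B_R)}\le |B_{2R}\setminus B_R|^{1/p-1/p^*}\,\|u\|_{L^{p^*}(B_{2R}\setminus B_R)}\le CR\,\|u\|_{L^{p^*}(B_{2R}\setminus B_R)}$, which uses only $u\in L^{p^*}(\Omega)$.
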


\begin{proof}
In the same spirit of Proposition \ref{prop:Comparison2} we prove our result in the case $p \geq 2$. The other case is similar and it can be shown using the same arguments. To this aim, let $\varphi_R \in C^\infty_c({B}_{2R})$ be a standard cut-off function such that 
$$ \begin{cases}
	0 \leq \varphi_R \leq 1\\
	\varphi_R \equiv 1 \qquad \ \  \text{on } {B}_R\\
	|\nabla \varphi_R| \leq \frac{2}{R} \quad \text{on } {B}_{2R} \setminus {B}_R.
\end{cases}$$
Let us define
$$ \eta_1:=\varphi_R\frac{\min\{(u^p-v^p)^+,m\}}{u^{p-1}} \qquad \text{and} \qquad \eta_2:=\varphi_R \frac{\min\{(u^p-v^p)^+,m\}}{v^{p-1}}.$$
where $m>1$. As pointed out in the proof of previous proposition, it is possible to show, by standard arguments, that $\eta_1$ and $\eta_2$ are good test functions for the weak formulations \eqref{eq:comparison} and \eqref{eq:comparison3}. Hence, we obtain
\begin{equation}\label{eq:weakExterior1}
	\begin{split}
		\int_\Omega H^{p-1}(\nabla u) \langle \nabla H (\nabla u), \nabla \eta_1 \rangle \, dx &- \int_\Omega H^{p-1}(\nabla v) \langle \nabla H (\nabla v), \nabla \eta_2 \rangle \, dx\\
		&\le  \int_\Omega \frac{\gamma}{H^\circ(x)^p} \left(u^{p-1}\eta_1-v^{p-1}\eta_2\right) \, dx\\
		&+ \int_\Omega f(x) u^{p-1} \eta_1 -g(x)v^{p-1} \eta_2 \,dx \le 0,
	\end{split}
\end{equation}
since $f\le g$ in $\Omega.$
Now we explicitly compute the left hand side of \eqref{eq:weakExterior1}.
\begin{equation}\label{eq:weakExterior2}
	\begin{split}
		0\ge &\int_\Omega H^{p-1}(\nabla u) \langle \nabla H (\nabla u), \nabla \eta_1 \rangle \, dx - \int_\Omega H^{p-1}(\nabla v) \langle \nabla H (\nabla v), \nabla \eta_2 \rangle \, dx\\
		= &\int_{\Omega_1}  H^{p-1}(\nabla u) \langle \nabla H (\nabla u), \nabla \left(u-\frac{v^p}{u^p}u\right) \rangle \varphi_R   +  H^{p-1}(\nabla v) \langle \nabla H (\nabla v), \nabla \left(v-\frac{u^p}{v^p}v\right) \rangle \varphi_R \, dx\\
		&+\int_{\Omega_1} \left(u-\frac{v^p}{u^p}u\right) H^{p-1}(\nabla u) \langle \nabla H (\nabla u), \nabla \varphi_R \rangle  + \left(v-\frac{u^p}{v^p}v\right) H^{p-1}(\nabla v) \langle \nabla H (\nabla v), \nabla \varphi_R \rangle \, dx\\
		&+\int_{\Omega_2}  H^{p-1}(\nabla u) \langle \nabla H (\nabla u), \nabla \left(u^{1-p}\right)\rangle m \varphi_R \,dx +\int_{\Omega_2}  H^{p-1}(\nabla u) \langle \nabla H (\nabla u), \nabla \varphi_R\rangle mu^{1-p}  \,dx \\ &-\int_{\Omega_2}  H^{p-1}(\nabla v) \langle \nabla H (\nabla v), \nabla \left(v^{1-p}\right)\rangle m \varphi_R \,dx -  \int_{\Omega_2}  H^{p-1}(\nabla v) \langle \nabla H (\nabla v), \nabla \varphi_R \rangle mv^{1-p}\, dx\\
		=&: I_1+I_2+I_3+I_4,
	\end{split}
\end{equation}
where $\Omega_1 := \{x \in \Omega \ | \ 0 \leq u^p-v^p \leq m\}$ and $\Omega_2:= \{x \in \Omega \ | \ u^p-v^p \geq m\}$.
By Proposition \ref{prop:Comparison1} and using the definition of $\varphi_R$, it follows that there exits a positive constant depending only on $p$ such that
\begin{equation}\label{I_1}
I_1 \geq C_p \int_{\Omega_1 \cap {B}_R}(u^p+v^p) H^p(\nabla (\ln u -\ln v)) \, dx.
\end{equation}
Now we are going to give estimates for $I_2, I_3$ and $I_4$. We start with $I_2$. Using \eqref{eq:BddH} and the Cauchy-Schwarz inequality, setting $\tilde \Omega_1:=\{x\in \Omega: v^p\le u^p\},$ we have
\begin{equation}\label{I_2}
\begin{split}
|I_2| \leq& M\int_{\Omega_1} \left|u-\frac{v^p}{u^p}u\right| H^{p-1}(\nabla u) |\nabla \varphi_R| \,dx + M\int_{\Omega_1} \left|v-\frac{u^p}{v^p}v\right| H^{p-1} (\nabla v) |\nabla \varphi_R|\,dx\\
\le &  M\int_{\tilde \Omega_1} \left(2u H^{p-1}(\nabla u) + v H^{p-1} (\nabla v) \right) |\nabla \varphi_R| \,dx + M\int_{\tilde \Omega_1} u^p H^{p-1}(\nabla \ln v) |\nabla \varphi_R| \,dx \\
\leq & \frac{\mathcal{C}}{R}\int_{{B}_{2R} \setminus {B}_R}  u \cdot H^{p-1}(\nabla u) \,dx + \frac{\mathcal{C}}{R}\int_{{B}_{2R} \setminus {B}_R}  v \cdot H^{p-1} (\nabla v) \,dx\\
& + \frac{\mathcal{C}}{R} \int_{{B}_{2R} \setminus {B}_R}  u^p H^{p-1}(\nabla \ln v) \,dx \\
\leq & \mathcal{C} \left(\int_{{B}_{2R} \setminus {B}_R}  H^{p}(\nabla u) \,dx \right)^\frac{p-1}{p}\left(\int_{{B}_{2R} \setminus {B}_R}  u^{p^*} \,dx \right)^\frac{1}{p^*}\\
& + \mathcal{C} \left(\int_{{B}_{2R} \setminus {B}_R}  H^{p}(\nabla v) \,dx \right)^\frac{p-1}{p}\left(\int_{{B}_{2R} \setminus {B}_R}  v^{p^*} \,dx \right)^\frac{1}{p^*}\\
& +  \frac{\mathcal{C}}{R} \int_{{B}_{2R} \setminus {B}_R}  u^p H^{p-1}(\nabla \ln v) \,dx,
\end{split}
\end{equation}
where in the last line we applied the H\"older inequality with conjugate exponent $\left(N,p/(p-1),p^*\right)$ and $\mathcal{C}:=2M$. Passing to the limit for $R$ that goes to $+\infty$ in the right hand side of \eqref{I_2}, using also assumption \eqref{logAss} and \eqref{eq:equivalentNorm},  we deduce that $I_2$ goes to zero.

Now we proceed with the estimate of the term $I_3$. By \eqref{eq:EulerThm}, we have
\begin{equation}\nonumber
I_3 = m \int_{\Omega_2} H^{p-1}(\nabla u) \langle \nabla H (\nabla u), \nabla \varphi_R \rangle  u^{1-p} \,dx+ m(1-p) \int_{\Omega_2} H^p(\nabla u) \varphi_R u^{-p} \, dx
\end{equation}
Therefore
\begin{equation}\nonumber
	|I_3| \leq m M \int_{\tilde \Omega_2} H^{p-1}(\nabla u) |\nabla \varphi_R| u^{1-p} \,dx+ m(p-1) \int_{\tilde \Omega_2} H^p(\nabla u) \varphi_R u^{-p} \, dx,
\end{equation}
where $\tilde \Omega_2 := \{x \in \Omega \ | \  u^p \geq m\}$. Using this definition and also the properties of $\varphi_R$ we deduce that
\begin{equation}\label{I_34}
\begin{split}
	|I_3| &\leq \frac{2M}{R} \int_{\tilde \Omega_2}  u \cdot H^{p-1}(\nabla u)  \,dx+ (p-1) \int_{\tilde \Omega_2} H^p(\nabla u)  \, dx \\
	&\leq \mathcal{C} \left(\int_{{B}_{2R} \setminus {B}_R}  H^{p}(\nabla u) \,dx\right)^\frac{p-1}{p} \left(\int_{{B}_{2R} \setminus {B}_R}  u^{p^*} \,dx \right)^\frac{1}{p^*} + (p-1) \int_{\tilde \Omega_2} H^p(\nabla u)  \, dx,
\end{split}
\end{equation}
where $\mathcal{C}:=2M$. Passing to the limit for $m,R$ that go to $+\infty$, we deduce that 
\begin{equation}\label{I_3}
    \lim _{m,R\rightarrow + \infty} I_3=0.
\end{equation}

For the last term $I_4$, by \eqref{eq:EulerThm}, recalling $\tilde\Omega_2:= \{x\in \Omega: u^p\ge m\}$, we have
\begin{equation}\label{I_4}
	\begin{split}
		I_4 &= m(p-1) \int_{\Omega_2}  H^{p}(\nabla v) \varphi_R v^{-p}  \, dx -  \int_{\Omega_2}  mH^{p-1}(\nabla v) \langle \nabla H(\nabla v), \nabla \varphi_R \rangle v^{1-p}  \, dx\\
		&\geq -  M \int_{\Omega_2} m H^{p-1}(\nabla \ln v) | \nabla \varphi_R |\, dx \ge   M \int_{\tilde \Omega_2} u^p H^{p-1}(\nabla \ln v) | \nabla \varphi_R |\, dx\\
		&\geq -\frac{2M}{R} \int_{{B}_{2R} \setminus {B}_R} u^p H^{p-1}(\nabla 	\ln v)  \, dx
	\end{split}
\end{equation}
Hence, passing to the limit the right hand side of \eqref{I_4}, by \eqref{logAss} we have that $I_4$ goes to zero when $R$ tends to $+\infty$.

Finally, if we combine all the estimates \eqref{I_1}, \eqref{I_2}, \eqref{I_3}, \eqref{I_4} and we pass to the limit for $m, R \rightarrow + \infty$ we deduce that
\[\begin{split}
	0\ge&\int_\Omega H^{p-1}(\nabla u) \langle \nabla H (\nabla u), \nabla \eta_1 \rangle \, dx - \int_\Omega H^{p-1}(\nabla v) \langle \nabla H (\nabla v), \nabla \eta_2 \rangle \, dx\\
	=& \limsup_{R \rightarrow + \infty} (I_1 + I_2 + I_3 + I_4) \geq \liminf_{R \rightarrow + \infty} (I_1 + I_2 + I_3 + I_4) \\
	\geq &\int_{\{u \geq v\}} (u^p+v^p) H^p(\nabla(\ln u - \ln v)) \, dx \geq 0,
\end{split}\]
which implies $u \leq v$ in $\Omega$ as we concluded in the proof of Proposition \ref{prop:Comparison2}.

\end{proof}

\section{Proof of the main results} \label{AsympEst}

This section is dedicated to the proof of our main results:  Theorem \ref{thm:asymptEst}, Theorem  \ref{introtecnicanew} and Theorem \ref{stimagradiente}.

\begin{proof}[Proof of Theorem \ref{thm:asymptEst}]
We start by proving \eqref{eq:estat0}. To this aim, let us consider $u$ a solution of 
\begin{equation}\label{11}
    -\Delta_p^H u - \frac{\gamma}{[H^\circ(x)]^p} u^{p-1} = u^{p^*-p}u^{p-1} \qquad \text{in } \R^N.
    \end{equation}
    In particular, we have that $u$ is a subsolution of \eqref{11} in any bounded domain $\Omega=\mathcal{B}_{R_1}^{H^\circ}$. 
    We note that $f(x):=u^{p^*-p}\in L^{\frac{N}{p}}(\mathcal{B}_{R_1}^{H^\circ})$ and satisfies $|f(x)|\le A[H^{\circ}(x)]^{-\alpha}$ with $\alpha=\left((N-p)/{p}-\sigma_1\right)(p^*-p)<p$ for $x\in \mathcal{B}_{R_1}^{H^\circ}$ due to Theorem \ref{teoo}.
    By Proposition \ref{prop:SubSuperComp} we have that the function 	
$$v(H^\circ(x))=\frac{1-\delta [H^\circ(x)]^\varepsilon}{[H^\circ(x)]^{\mu_1}} \in \mathcal{D}^{1,p}(\mathcal{B}^{H^\circ}_{R_1})$$ 
is a positive supersolution of \eqref{eq:comparison2} in $\mathcal{B}^{H^\circ}_{R_1} \subset \Omega$, with $g\in L^{\frac{N}{p}}(\mathcal{B}^{H^\circ}_{R_1})$ satisfying $g(x)\ge A [H^{\circ}(x)]^{-\alpha}$ and where $0<\delta, \varepsilon, R_1<1$ are positive constants depending only on $N, p, \gamma, A$ and $\alpha.$

Let us consider $\Gamma >0$, $\mathcal{M}= \sup_{\partial \mathcal{B}_{R_1}^{H^\circ}} u$, $\mathcal{N}= \sup_{\partial \mathcal{B}_{R_1}^{H^\circ}} 1/v$ and define
$$w(x):= \mathcal{N}(\mathcal{M}+\Gamma)v(H^{\circ}(x)).$$
It is easy to chek that $w$ is a positive supersolution of \eqref{eq:comparison2} in $\mathcal{B}^{H^\circ}_{R_1} \subset \Omega$ and $\inf_{\mathcal{B}^{H^\circ}_{R_1}} w = \mathcal{M}+\Gamma>0$ and $u \leq w$ on $\partial \mathcal{B}^{H^\circ}_{R_1}$. Hence, by Proposition \ref{prop:Comparison2} we deduce that
$$u \leq w \qquad \text{in } \mathcal{B}^{H^\circ}_{R_1}.$$
Passing to the limit for $\Gamma \rightarrow 0$ we obtain that
$$u(x) \leq \frac{C}{[H^\circ(x)]^{\mu_1}} \qquad \text{in } \mathcal{B}^{H^\circ}_{R_1},$$
where $C=\mathcal{M} \cdot \mathcal{N}$.

Now we have to show the estimate from below. Let $u$ be a weak solution of \eqref{eq:ProbDoublyCritical}, then $u$ is a supersolution of 
\begin{equation}\label{111}
    -\Delta_p^H u - \frac{\gamma}{[H^\circ(x)]^p} u^{p-1} = 0 \qquad \text{in } \mathcal{B}^{H^\circ}_{R_1}.
    \end{equation}

We set $c_1:=\inf _{ \mathcal{B}_{R_1}^{H^\circ}} u>0$.

Now, we define
$$\tilde w (x) := c\frac{ c_1}{[H^\circ(x)]^{\mu_1}} \qquad \text{in } \mathcal{B}^{H^\circ}_{R_1},$$
where $c= \inf_{\partial \mathcal{B}^{H^\circ}_{R_1}} [H^\circ(x)]^{\mu_1}=R_1^{\mu_1}$ . The function $\tilde w$ is a subsolution to  \eqref{111} in $\mathcal{B}^{H^\circ}_{R_1}$. Since, it holds that $u \geq \tilde w$ in $\partial \mathcal{B}^{H^\circ}_{R_1}$, we conclude by using Proposition \ref{prop:Comparison2} to obtain
$$u \geq \tilde w \qquad \text{in } \mathcal{B}^{H^\circ}_{R_1},$$
and hence combining the estimates from above and below we deduce that \eqref{eq:estat0} is proved.

Now, our aim is to prove \eqref{eq:estatInf}. Let us consider $u$ a subsolution of 
\begin{equation}\label{22}
    -\Delta_p^H u - \frac{\gamma}{[H^\circ(x)]^p} u^{p-1} = u^{p^*-p}u^{p-1} \qquad \text{in } (\mathcal{B}_{R_2}^{H^\circ})^c.
    \end{equation}
    
    We note that $f(x):=u^{p^*-p}\in L^{\frac{N}{p}}((\mathcal{B}_{R_2}^{H^\circ})^c)$ and by Theorem \ref{teoo} we have $|f(x)|\le A[H^{\circ}(x)]^{-\alpha}$ with $\alpha=\left((N-p)/{p}+\sigma_2\right)(p^*-p)>p$ for $x\in (\mathcal{B}_{R_2}^{H^\circ})^c$.
    By Proposition \ref{prop:SubSuperComp}  the function 	
$$v(H^\circ(x))=\frac{1-\delta [H^\circ(x)]^{-\varepsilon}}{[H^\circ(x)]^{\mu_2}} \in \mathcal{D}^{1,p}((\mathcal{B}^{H^\circ}_{R_2})^c)$$ 
is a positive supersolution of \eqref{eq:comparison3} in $(\mathcal{B}^{H^\circ}_{R_2})^c \subset \Omega$, with $g\in L^{\frac{N}{p}}((\mathcal{B}^{H^\circ}_{R_2})^c)$ satisfying $g(x)\ge A [H^{\circ}(x)]^{-\alpha}$ and where $0<\delta, \varepsilon<1$ and $R_2>1$ are positive constants depending only on $N, p, \gamma, A$ and $\alpha.$

Let us consider $\Gamma >0$, $\mathcal{M}= \sup_{(\partial \mathcal{B}_{R_2}^{H^\circ})^c} u$, $\mathcal{N}= \sup_{(\partial \mathcal{B}_{R_2}^{H^\circ})^c} 1/v$ and define
$$w(x):= \mathcal{N}(\mathcal{M}+\Gamma)v(H^{\circ}(x)).$$
We note that $w$ is a positive supersolution of \eqref{eq:comparison3} in $(\mathcal{B}^{H^\circ}_{R_2})^c \subset \Omega$ and $\inf_{(\mathcal{B}^{H^\circ}_{R_2})^c} w = \mathcal{M}+\Gamma>0$ and $u \leq w$ on $(\partial \mathcal{B}^{H^\circ}_{R_2})^c$. 
We verify the condition \eqref{logAss}. Since $|\nabla \log v(x)|\le C|x|^{-1}$, by H\"older inequality we have 

\begin{equation}
    \begin{split}
      	\limsup_{R \rightarrow + \infty} \frac{1}{R} \int_{{B}_{2R} \setminus {B}_R} u^p |\nabla \log v|^{p-1}\,dx\le \limsup_{R \rightarrow + \infty}\frac{C}{R}\left(\int_{B_{2R}\setminus B_R}|u|^{p^*}\,dx\right)^{\frac{p}{p^*}}=0,  
    \end{split}
\end{equation}
where $C$ is a constant independent of $R$.

Hence, by Proposition \ref{prop:Comparison3} we deduce that
$$u \leq w \qquad \text{in } (\mathcal{B}^{H^\circ}_{R_2})^c.$$
Passing to the limit for $\Gamma \rightarrow 0$ we obtain that
$$u(x) \leq \frac{C}{[H^\circ(x)]^{\mu_2}} \qquad \text{in } (\mathcal{B}^{H^\circ}_{R_2})^c,$$
where $C=\mathcal{M} \cdot \mathcal{N}$

We conclude with the estimate from below.  Let $u$ be a weak solution of \eqref{eq:ProbDoublyCritical}.   Then $u$ is a supersolution of 
\begin{equation}\label{113}
    -\Delta_p^H u - \frac{\gamma}{[H^\circ(x)]^p} u^{p-1} = 0 \qquad \text{in } \R^N.
    \end{equation}
We claim that 
\begin{equation}\label{supersolution}
\int_{B_{2R}\setminus B_R}|\nabla \log u|^p \le CR^{N-p}
\end{equation}
for $R$ sufficiently large and constant $C$ independent of $R$. Indeed, by \eqref{113}, we have \begin{equation}\label{zio}
-\Delta_p^H u\ge 0\quad \text{in } \R^N.
\end{equation}
Therefore, considering the test function $\eta =\zeta ^p u^{1-p}$, with $\zeta \in C^1_c(\R^N)$ nonnegative function, and taking in \eqref{zio} we have 

\begin{equation}\label{zioo}
    \begin{split}
        -(p-1)\int_{\R^N} H^p(\nabla u) \zeta ^pu^{-p}\,dx+p\int_{\R^N} H^{p-1}(\nabla u)\langle \nabla H(\nabla u),\nabla \zeta\rangle \zeta^{p-1} u^{1-p}\,dx\ge 0.
    \end{split}
\end{equation}
By H\"older inequality and by $0$-homogeneity of $\nabla H$ we get 
\begin{equation}\label{aa}
  \int_{\R^N} H^p(\nabla \log u) \zeta ^p\,dx  \le M\frac{p}{p-1}\left(\int_{\R^N}H^p(\nabla \log u) \zeta ^p\,dx \right)^{\frac{p-1}{p}}\left(\int_{\R^N}|\nabla \zeta |^p\,dx\right)^{\frac{1}{p}}.
\end{equation}
Taking a standard cutoff function $\zeta$ in \eqref{aa} we get the claim \eqref{supersolution}. 

Now we set $c_2=\inf_{(\partial\mathcal{B}^{H^\circ}_{R_2})^c} u>0$ and  $c=\inf_{(\partial \mathcal{B}^{H^\circ}_{R_2})^c}[H^{\circ}(x)]^{\mu_2}$. We note that $v={c_2}c{[H^{\circ}(x)]^{-\mu_2}}$ is a weak solution of \eqref{113}. Moreover the condition \eqref{logAss} is verified. Indeed by H\"older inequality and \eqref{supersolution} we have 
\begin{equation}\label{oo}
\begin{split}
    \frac{1}{R} \int_{B_{2R}\setminus B_R} v^p|\nabla \log u|^{p-1}\,dx\le C R^{-1-p\mu_2+\frac{N}{p}}\left(\int_{B_{2R}\setminus B_R}|\nabla \log u|^p\,dx\right)^{\frac{p-1}{p}} \\ \le C R^{-1-p\mu_2+\frac{p-1}{p}(N-p)+\frac{N}{p}} \rightarrow 0
\end{split}
\end{equation}

since $\mu_2> (N-p)/{p}.$
Applying the Proposition \ref{prop:Comparison3} we conclude that $$u(x)\ge v(x)=c\frac{c_2}{[H^{\circ}(x)]^{\mu_2}}\quad \text{in } (\mathcal{B}^{H^\circ}_{R_2})^c$$ and therefore the thesis.
\end{proof}

Now we prove Theorem \ref{introtecnicanew} that will be essential to prove the asymptotic behavior of the gradient of solutions to \eqref{eq:ProbDoublyCritical}. For the reader convenience we state a more detailed statement contained in the following:

\begin{thm}\label{tecnicanew}
	Let $v\in C^{1,\alpha}_{loc}(\R^N\setminus \{0\})$ be a positive weak solution of the equation
	\begin{equation}\label{zioa}
		-\Delta_p^H v - \frac{\gamma}{[H^\circ(x)]^p} v^{p-1} = 0 \qquad \text{in } \R^N\setminus \{0\},
	\end{equation}
	where $0 \leq \gamma < C_H$. Assume that there exist two positive constants $C$ and $c$ such that 
	\begin{equation}\label{a_1}
		\frac{c}{[H^{\circ}(x)]^{\mu_2}}\le v(x)\le \frac{C}{[H^{\circ}(x)]^{\mu_2}} \qquad \forall x\in \R^N\setminus \{0\},
	\end{equation}
	and suppose that there exists a positive constant $\hat C$ such that 
	\begin{equation}\label{assunzionegradiente}
		|\nabla v(x)|\le \frac{\hat C}{[H^{\circ}(x)]^{\mu_2+1}} \qquad \ \  \forall x\in \R^N\setminus \{0\}, 
	\end{equation}
	then 
	\begin{equation}\label{v_1}
		v(x)=\frac{\overline c_1}{[H^{\circ}(x)]^{\mu_2}}, \qquad \qquad \qquad \qquad 
	\end{equation}
with 
	$$\overline{c_1}:=\limsup_{|x|\rightarrow 0} [H^{\circ}(x)]^{\mu_2}v(x). \qquad \ \ \ $$
	On the other hand, suppose that there exist two positive constants $\tilde C$ and $\tilde c$ such that 
	\begin{equation}\label{a_2}
		\frac{\tilde c}{[H^{\circ}(x)]^{\mu_1}}\le v(x)\le \frac{\tilde C}{[H^{\circ}(x)]^{\mu_1}} \qquad \forall x\in \R^N\setminus \{0\},
	\end{equation}
	and suppose that there exists a positive constant $\overline C$ such that 
	\begin{equation}\label{assunzionegradiente2}
		|\nabla v(x)|\le \frac{\overline C}{[H^{\circ}(x)]^{\mu_1+1}} \qquad \ \ \forall x\in \R^N\setminus \{0\}, 
	\end{equation}
	then  
	\begin{equation}\label{v_2}
		v(x)=\frac{\overline c_2}{[H^{\circ}(x)]^{\mu_1}}, \qquad \qquad \qquad \qquad 
	\end{equation}
	with $$\overline{c_2}:=\limsup_{|x|\rightarrow +\infty} [H^{\circ}(x)]^{\mu_1}v(x). \qquad \ \ \ $$
\end{thm}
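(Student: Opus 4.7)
I focus on the case $\mu_i=\mu_2$; the case $\mu_i=\mu_1$ is entirely dual, exchanging the roles of $0$ and $\infty$. The first ingredient is a scaling invariance: for $r>0$ define $v_r(x):=r^{\mu_2}v(rx)$. Because $-\Delta_p^H[v(r\cdot)](x)=r^p(-\Delta_p^Hv)(rx)$ and $[H^\circ(rx)]^{-p}=r^{-p}[H^\circ(x)]^{-p}$, one checks that $v_r$ still solves \eqref{zioa} on $\R^N\setminus\{0\}$, and both the two-sided bound \eqref{a_1} and the gradient bound \eqref{assunzionegradiente} are preserved by $v_r$ with the same constants $c,C,\hat C$. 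The $C^{1,\alpha}$ regularity theory for the anisotropic $p$-Laplacian (\cite{DB,Tolk,ACF,CSR}), applied on compact annuli bounded away from $0$, then makes the family $\{v_r\}_{r>0}$ precompact in $C^1_{\mathrm{loc}}(\R^N\setminus\{0\})$.

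The second ingredient is the identification of a specific scaling limit via the strong maximum principle. Pick a sequence $y_k\to 0$ with $[H^\circ(y_k)]^{\mu_2}v(y_k)\to\bar c_1$, write $y_k=\rho_k\xi_k$ with $\rho_k=|y_k|$ and $|\xi_k|=1$, and pass to a subsequence so that $\xi_k\to\xi^*$ in the Euclidean unit sphere and $v_{\rho_k}\to v^*$ in $C^1_{\mathrm{loc}}$. Combining the definition of $\bar c_1$ with the rescaling one obtains $v^*(x)\le\bar c_1[H^\circ(x)]^{-\mu_2}$ on $\R^N\setminus\{0\}$; on the other hand the identity
\[
v_{\rho_k}(\xi_k)=\frac{[H^\circ(y_k)]^{\mu_2}v(y_k)}{[H^\circ(\xi_k)]^{\mu_2}}
\]
passes to the limit and gives $v^*(\xi^*)=\bar c_1[H^\circ(\xi^*)]^{-\mu_2}$. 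Since $v^*$ and $\bar c_1[H^\circ]^{-\mu_2}$ are positive $C^{1,\alpha}$ solutions of \eqref{zioa} which are ordered and touch at the interior point $\xi^*$, the strong maximum principle for quasilinear anisotropic operators (of Pucci--Serrin--V\'azquez--Tolksdorf type, leveraging the uniform ellipticity of $H$) forces $v^*\equiv\bar c_1[H^\circ]^{-\mu_2}$ on $\R^N\setminus\{0\}$.

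The final step upgrades this scaling identification to a pointwise identity for $v$ itself by applying the comparison principles of Section \ref{preliminaryEst}. Fix $\varepsilon>0$: the definition of $\bar c_1$ yields $r_\varepsilon>0$ such that $v\le W_\varepsilon:=(\bar c_1+\varepsilon)[H^\circ]^{-\mu_2}$ on $\partial\mathcal B^{H^\circ}_{r_\varepsilon}$. Since $\mu_2>(N-p)/p$, one has $|\nabla\log W_\varepsilon|\le C/|x|$ and the two-sided bound \eqref{a_1} gives
\[
\frac{1}{R}\int_{B_{2R}\setminus B_R}v^p|\nabla\log W_\varepsilon|^{p-1}\,dx\le CR^{N-p\mu_2-p}\xrightarrow[R\to\infty]{}0,
\]
so the logarithmic hypothesis \eqref{logAss} is verified. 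Proposition \ref{prop:Comparison3}, applied on $(\mathcal B^{H^\circ}_{r_\varepsilon})^c$ with subsolution $v$ and supersolution $W_\varepsilon$ (the positivity assumption on the supersolution being handled exactly as in the proof of Theorem \ref{thm:asymptEst}, where an analogous vanishing-at-infinity barrier was used), yields $v\le W_\varepsilon$ in the exterior, hence globally; sending $\varepsilon\to 0$ gives $v\le\bar c_1[H^\circ]^{-\mu_2}$ on $\R^N\setminus\{0\}$. A symmetric argument, starting from a $\liminf$-realizing sequence and invoking the bounded comparison Proposition \ref{prop:Comparison2} on annuli $\mathcal B^{H^\circ}_R\setminus\mathcal B^{H^\circ}_{r}$ (together with scaling at $\infty$ to control the outer boundary datum), provides the matching lower bound, whence $v\equiv\bar c_1[H^\circ]^{-\mu_2}$. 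The case $\mu_i=\mu_1$ is handled analogously: one scales at $\infty$ and invokes Proposition \ref{prop:Comparison2} on $\mathcal B^{H^\circ}_R$, which is available since $\mu_1<(N-p)/p$ makes $[H^\circ]^{-\mu_1}$ locally $\mathcal D^{1,p}$ near the origin.

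The main obstacle is the last paragraph: one has to apply the comparison principles of Section \ref{preliminaryEst} to the explicit barriers $(\bar c_1\pm\varepsilon)[H^\circ]^{-\mu_2}$, where the technical hypothesis $\inf v>0$ has to be circumvented for vanishing-at-infinity supersolutions (enabled precisely by the logarithmic decay condition \eqref{logAss}, which in turn is enforced by the gradient bound \eqref{assunzionegradiente} together with $\mu_2>(N-p)/p$), and one must further ensure that the $\liminf$ matches the $\limsup$ at the singular point so that the two one-sided bounds collapse to the single constant $\bar c_1$; this matching is the nonlinear substitute for the Kelvin-transform argument available in the Euclidean case $H=|\cdot|$.
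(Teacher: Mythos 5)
You have the right skeleton --- rescale at the singular point along a $\limsup$-realizing sequence, use $C^{1,\alpha}$ compactness plus a strong comparison principle to identify the blow-up limit as $\bar c_1[H^\circ]^{-\mu_2}$, then pull this back to $v$ by a comparison argument in an exterior domain --- and your verification of the logarithmic hypothesis \eqref{logAss} from \eqref{assunzionegradiente} and $\mu_2>(N-p)/p$ is exactly the right computation. However, the pull-back step has a genuine gap, and you have in fact put your finger on it without resolving it.

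Your upper bound works: the definition of $\bar c_1$ gives $v\le W_\varepsilon:=(\bar c_1+\varepsilon)[H^\circ]^{-\mu_2}$ on $\partial\mathcal{B}^{H^\circ}_{r_\varepsilon}$, Proposition \ref{prop:Comparison3} propagates this outward, and $\varepsilon\to0$ gives $v\le\bar c_1[H^\circ]^{-\mu_2}$. But the same scheme for the lower bound only produces $v\ge\underline c_1[H^\circ]^{-\mu_2}$ with $\underline c_1:=\liminf_{|x|\to0}[H^\circ(x)]^{\mu_2}v(x)$, and nothing in your argument shows $\underline c_1=\bar c_1$. The strong-comparison identification of the scaling limit $v^*$ does not help here: $v^*$ is the limit of rescalings along the particular sequence $\rho_k$ realizing the $\limsup$, not of $v$ itself, and along a $\liminf$-realizing sequence the blow-up limit could a priori be a different member of the family $\{K[H^\circ]^{-\mu_2}\}_{K>0}$. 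Your appeal to Proposition \ref{prop:Comparison2} on annuli ``with scaling at $\infty$ to control the outer boundary datum'' does not repair this, since on the outer sphere $\partial\mathcal{B}^{H^\circ}_R$ the only available barrier constant is the global lower bound $c$, not $\bar c_1$.

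What the paper does differently is precisely designed to avoid this issue. Rather than invoking Proposition \ref{prop:Comparison3} as a black box with the slackened barrier $W_\varepsilon$ and the fixed inner sphere $\partial\mathcal{B}^{H^\circ}_{r_\varepsilon}$, it runs the comparison argument from scratch on $\R^N\setminus\mathcal{B}^{H^\circ}_{R_n}$ against the \emph{exact} barrier $\hat v:=\bar c_1[H^\circ]^{-\mu_2}$, using the divergence theorem to isolate the inner boundary flux on $\partial\mathcal{B}^{H^\circ}_{R_n}$ (display \eqref{sei}). The crucial point is that there is no ordering $v\le\hat v$ on $\partial\mathcal{B}^{H^\circ}_{R_n}$ to invoke; instead, after the change of variables $x=R_ny$ and with the level $m=R_n^{-\epsilon}$, the boundary integral is shown in \eqref{aaa1}--\eqref{aaa} to scale like $R_n^{N-p-\epsilon}\to0$, with an extra $o(1)$ factor coming from $\|\nabla w_n+\mu_2\bar c_1\nabla H^\circ\|_{L^\infty(\partial\mathcal{B}_1^{H^\circ})}\to0$ supplied by the rescaling step. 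Repeating this with the roles of $v$ and $\hat v$ exchanged, and using \eqref{assunzionegradiente} to verify the decay condition \eqref{ap} for the outer cutoff, yields $\hat v\le v$ as well, with the same constant $\bar c_1$ in both directions. In particular the identity $\underline c_1=\bar c_1$ is an output of this argument, not an input: the ``nonlinear substitute for the Kelvin transform'' you anticipate is exactly this vanishing-cut boundary estimate. Without it, or some alternative mechanism relating the $\liminf$ and $\limsup$, the proof does not close.
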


\begin{proof}
   We prove this result only for $p\ge 2.$ The other case is similar. Suppose that \eqref{a_1} holds. From definition of $\overline c_1$, there exist $r$ (sufficiently small) such that 
   \begin{equation}\label{b_2}
       v(x)\leq \frac{\overline c_1+a_n}{[H^{\circ}(x)]^{\mu_2}}\quad \text{in } B_r(0)\setminus \{0\},
    \end{equation}
given $a_n\rightarrow 0$.

On the other hand, there exist sequences of radii $R_n$ and points $x_n$ with $R_n$ tending to zero and $H^{\circ}(x_n)=R_n$, such that 
    \begin{equation}\label{b_1}
       v(x_n)\ge \frac{\overline c_1-a_n}{[H^{\circ}(x_n)]^{\mu_2}}.
    \end{equation}
Now we set 
\begin{equation}\label{zeta1}
    w_n(x):= R_n^{\mu_2}v(R_nx) \qquad \forall x\in \mathcal{B}_{2}^{H^\circ}\setminus \mathcal{B}_{1/2}^{H^\circ}.
\end{equation}
 By \eqref{a_1}, it follows that $w_n$ is uniformly bounded in $L^{\infty}(\mathcal{B}_{2}^{H^\circ}\setminus \mathcal{B}_{1/2}^{H^\circ})$ and, since $w_n$ satisfies the equation \eqref{zioa}, by \cite{ACF,L,L2}, it is also uniformly bounded in $C^{1,\alpha}(K),$ for $0<\alpha<1$ and for any compact set $K\subset \mathcal{B}_{2}^{H^\circ}\setminus \mathcal{B}_{1/2}^{H^\circ}$. Moreover, from Ascoli-Arzela's Theorem, we deduce that $w_n\rightarrow w_{\infty}$ in the norm $\| \cdot\|_{C^{1,\alpha}(K)}$, for any compact set $K\subset \mathcal{B}_{2}^{H^\circ}\setminus \mathcal{B}_{1/2}^{H^\circ}.$
By \eqref{b_2}, we have $$w_{\infty}\le \frac{\overline c_1}{[H^{\circ}(x)]^{\mu_2}}$$ and by \eqref{b_1}, there exist a point $\overline x\in \partial \mathcal{B}_{1}^{H^\circ}$ such that $w_{\infty}(\overline x)=\overline c_1$. By the strong comparison principle \cite{lucio}, that holds under our assumption on $H$ (see Section~\ref{notations}), we have 
\begin{equation}\label{abcd}
w_{\infty}\equiv \frac{\overline c_1}{[H^{\circ}(x)]^{\mu_2}}  \quad \text{in }\mathcal{B}_{2}^{H^\circ}\setminus \mathcal{B}_{1/2}^{H^\circ}.
\end{equation}
Now we set 
\begin{equation}\label{eq:manmont}\hat v:=\frac{\overline c_1}{[H^{\circ}(x)]^{\mu_2}}
\end{equation} and we note that it solves \eqref{zioa}. Fix $R>0$ sufficiently large and let $\varphi_R \in C^\infty_c({B}_{2R})$ be a standard cut-off function such that 
\begin{equation}\label{tao} \begin{cases}
	0 \leq \varphi_R \leq 1\\
	\varphi_R \equiv 1 \qquad \ \  \text{on } {B}_R\\
	|\nabla \varphi_R| \leq \frac{2}{R} \quad \text{on } {B}_{2R} \setminus {B}_R.
\end{cases}
\end{equation}
Let us define
\begin{equation}\label{fun}\varphi_1:=\varphi_R\frac{\min\{(v^p-\hat v^p)^+,m\}}{v^{p-1}} \qquad \text{and} \qquad \varphi_2:=\varphi_R \frac{\min\{(v^p-\hat v^p)^+,m\}}{\hat v^{p-1}},\end{equation}
where $m>1$.

We remark that $\varphi_1$ and $\varphi_2$ are good test function in any domain $0\not \in \overline \Omega.$ Testing \eqref{fun} in \eqref{zioa} on the domain $\R^N\setminus \mathcal{B}_{R_n}^{H^\circ}$, and, since the stress field $H(\nabla u)^{p-1}\nabla H(\nabla u)\in W^{1,2}_{loc}(\R^N\setminus \mathcal{B}_{R_n}^{H^\circ})$ (see \cite{MSP}), exploiting the divergence theorem, we have 
\begin{equation}\label{sei}
 	\begin{split}
		& \int_{ \R^N\setminus \mathcal{B}_{R_n}^{H^\circ}} H^{p-1}(\nabla v) \langle \nabla H (\nabla v), \nabla \varphi_1 \rangle dx \,  - \int_{ \R^N\setminus\mathcal{B}_{R_n}^{H^\circ}} H^{p-1}(\nabla \hat v) \langle \nabla H (\nabla \hat v), \nabla \varphi_2 \rangle dx\, \\
		&- \int_{\R^N\setminus \mathcal{B}_{R_n}^{H^\circ}}\frac{\gamma}{H^\circ(x)^p} \left(v^{p-1}\varphi_1-v^{p-1}\varphi_2\right) \, dx\\
		&= \int_{\partial \mathcal{B}_{R_n}^{H^\circ}} H^{p-1}(\nabla v) \langle \nabla H (\nabla v), \eta_n \rangle \varphi_1\,  - H^{p-1}(\nabla \hat v) \langle \nabla H (\nabla \hat v), \eta_n \rangle\varphi_2 dx ,
	\end{split}   
\end{equation}
where $\eta_n$ is the inner unite normal vector at $\partial \mathcal{B}_{R_n}^{H^\circ}.$  
Now we set $\mathcal{A}:= \{ 0\le v^p-\hat v^p\le m\}$ and $\mathcal B:=\{ v^p-\hat v^p\ge m\}$. Then \eqref{sei} becomes: 
\begin{equation}\label{sette}
    \begin{split}
        &\int_{\partial \mathcal{B}_{R_n}^{H^\circ}} -H^{p-1}(\nabla v) \langle \nabla H (\nabla v), \eta_n \rangle \varphi_1\,  + H^{p-1}(\nabla \hat v) \langle \nabla H (\nabla \hat v), \eta_n \rangle\varphi_2 \,dx 
        \\ &=\int_{(\R^N\setminus \mathcal{B}_{R_n}^{H^\circ})\cap \mathcal A}  H^{p-1}(\nabla v) \langle \nabla H (\nabla v), \nabla \left(v-\frac{\hat v^p}{v^p}v\right) \rangle \varphi_R\,dx   \\&+\int_{(\R^N\setminus \mathcal{B}_{R_n}^{H^\circ})\cap \mathcal A}  H^{p-1}(\nabla \hat v) \langle \nabla H (\nabla \hat v), \nabla \left(\hat v-\frac{v^p}{\hat v^p}\hat v\right) \rangle \varphi_R \, dx\\
		&+\int_{(\R^N\setminus \mathcal{B}_{R_n}^{H^\circ})\cap \mathcal A} \left(v-\frac{\hat v^p}{v^p}v\right) H^{p-1}(\nabla v) \langle \nabla H (\nabla v), \nabla \varphi_R \rangle \,dx  \\ &+\int_{(\R^N\setminus \mathcal{B}_{R_n}^{H^\circ})\cap \mathcal A} \left(\hat v-\frac{v^p}{\hat v^p}\hat v\right) H^{p-1}(\nabla \hat v) \langle \nabla H (\nabla \hat v), \nabla \varphi_R \rangle \, dx\\
		&+\int_{(\R^N\setminus \mathcal{B}_{R_n}^{H^\circ})\cap \mathcal B}  H^{p-1}(\nabla v) \langle \nabla H (\nabla v), \nabla \left(v^{1-p}\right)\rangle m \varphi_R +  H^{p-1}(\nabla v) \langle \nabla H (\nabla v), \nabla \varphi_R\rangle mv^{1-p}  \,dx \\ &-\int_{(\R^N\setminus \mathcal{B}_{R_n}^{H^\circ})\cap \mathcal B}  \left(H^{p-1}(\nabla \hat v) \langle \nabla H (\nabla \hat v), \nabla \left(\hat v^{1-p}\right)\rangle m \varphi_R  +   H^{p-1}(\nabla \hat v) \langle \nabla H (\nabla \hat v), \nabla \varphi_R \rangle m\hat v^{1-p}\right)\, dx\\
		=&: I_1+I_2+I_3+I_4+I_5+I_6.
    \end{split}
\end{equation}
Now we set $\Omega_1:= (\R^N\setminus \mathcal{B}_{R_n}^{H^\circ})\cap \mathcal A$ and $\Omega_2:=(\R^N\setminus \mathcal{B}_{R_n}^{H^\circ})\cap \mathcal B$. 
By Proposition \ref{prop:Comparison1}, it follows that there exits a positive constant depending only on $p$ such that
\begin{equation}\label{I_1a}
I_1 +I_2\geq C_p \int_{\Omega_1\cap {B}_{2R}}(v^p+\hat v^p) H^p(\nabla (\ln v -\ln \hat v))\varphi_R \, dx.
\end{equation}
Now we estimate $I_3+I_4$. Using \eqref{eq:BddH}   and the H\"older inequality with conjugate exponent $\left(N,{p}/({p-1}),p^*\right)$ we have
\begin{equation}\label{I_2a}
\begin{split}
|I_3+I_4| \leq& M\int_{\Omega_1} \left|v-\frac{\hat v^p}{v^p}v\right| H^{p-1}(\nabla v) |\nabla \varphi_R| \,dx + M\int_{\Omega_1} \left|\hat v-\frac{v^p}{\hat v^p}\hat v\right| H^{p-1} (\nabla \hat v) |\nabla \varphi_R|\,dx\\
\le &  M\int_{\Omega_1} \left(2v H^{p-1}(\nabla v) + \hat v H^{p-1} (\nabla \hat v) \right) |\nabla \varphi_R| \,dx + M\int_{\Omega_1} v^p H^{p-1}(\nabla \ln \hat v) |\nabla \varphi_R| \,dx \\
\leq & \frac{2M}{R}\int_{{B}_{2R} \setminus {B}_R}  v  H^{p-1}(\nabla v) \,dx + \frac{2M}{R}\int_{{B}_{2R} \setminus {B}_R}  \hat v  H^{p-1} (\nabla \hat v) \,dx\\
& + \frac{2M}{R} \int_{{B}_{2R} \setminus {B}_R}  v^p H^{p-1}(\nabla \ln \hat v) \,dx \\
\leq & 2M \left(\int_{{B}_{2R} \setminus {B}_R}  H^{p}(\nabla v) \,dx \right)^\frac{p-1}{p}\left(\int_{{B}_{2R} \setminus {B}_R}  v^{p^*} \,dx \right)^\frac{1}{p^*}\\
& + 2M \left(\int_{{B}_{2R} \setminus {B}_R}  H^{p}(\nabla \hat v) \,dx \right)^\frac{p-1}{p}\left(\int_{{B}_{2R} \setminus {B}_R}  \hat v^{p^*} \,dx \right)^\frac{1}{p^*}\\
& +  \frac{2M}{R} \int_{{B}_{2R} \setminus {B}_R}  v^p H^{p-1}(\nabla \ln \hat v) \,dx.
\end{split}
\end{equation}
By \eqref{eq:manmont} we have $H(\nabla \ln \hat v(x))\le C[H^{\circ}(x)]^{-1}$, where $C=\mu_2$, we have that 
\begin{equation}\label{zizi}
\begin{split}
    \frac{1}{R}\int_{B_{2R}\setminus B_R} v^pH^{p-1}(\nabla \ln \hat v) &\le \frac{C}{R^p}\int_{B_{2R}\setminus B_R} |v|^p 
\\ &\le C\left(\int_{B_{2R}\setminus B_R}|v|^{p^*}\right)^{\frac{p}{p^*}},
    \end{split}
\end{equation}
where we used the H\"older inequality.
Since $v$ has the right summability at the infinity, for $R$ that goes to infinity in the right hand side of \eqref{I_2a}, we obtain that $I_3+I_4$ goes to zero.

Now we proceed with the estimate of the term $I_5$.
Recalling that $v^p \geq m$ in $\Omega_2$, we get 
\begin{equation}\nonumber
	|I_5| \leq m M \int_{\Omega_2} H^{p-1}(\nabla v) |\nabla \varphi_R| v^{1-p} \,dx+ m(p-1) \int_{ \Omega_2} H^p(\nabla v) \varphi_R v^{-p} \, dx,
\end{equation}
Using  the properties of $\varphi_R$ we obtain that
\begin{equation}\label{I_3a}
\begin{split}
	|I_5| &\leq \frac{M}{R} \int_{{B}_{2R} \setminus {B}_R}  v  H^{p-1}(\nabla v)  \,dx+ (p-1) \int_{ \Omega_2} H^p(\nabla v)  \, dx \\
	&\leq M \left(\int_{{B}_{2R} \setminus {B}_R}  H^{p}(\nabla v) \,dx\right)^\frac{p-1}{p} \left(\int_{{B}_{2R} \setminus {B}_R}  v^{p^*} \,dx \right)^\frac{1}{p^*} + (p-1) \int_{ \Omega_2} H^p(\nabla v)  \, dx,
\end{split}
\end{equation}
 Passing to the limit for $m,R$ that go to $+\infty$, we deduce that $I_5$ goes to zero since the set $ \Omega_2$ vanishes as $m \rightarrow +\infty$.

For the last term $I_6$ we obtain
\begin{equation}\label{I_4a}
	\begin{split}
		I_6 &= m(p-1) \int_{\Omega_2}  H^{p}(\nabla \hat v) \varphi_R \hat v^{-p}  \, dx -  \int_{\Omega_2}  mH^{p-1}(\nabla \hat v) \langle \nabla H(\nabla \hat v), \nabla \varphi_R \rangle \hat v^{1-p}  \, dx\\
		&\geq -  M \int_{\Omega_2} m H^{p-1}(\nabla \ln \hat v) | \nabla \varphi_R |\, dx\\
		&\geq -\frac{2M}{R} \int_{{B}_{2R} \setminus {B}_R} v^p H^{p-1}(\nabla 	\ln \hat v)  \, dx
	\end{split}
\end{equation}
Hence, passing to the limit in the right hand side of \eqref{I_4a} and using \eqref{zizi} we have that the right hand of \eqref{I_4a} tends to zero when $R$ tends to the infinity. 

Now we estimate the left hand of \eqref{sette}. By \eqref{fun} and Proposition \ref{prop:Lindqvist} (in particular see \eqref{intro:eq:ineqStandardBis}), we get
\begin{equation}
    \begin{split}
        &\int_{\partial \mathcal{B}_{R_n}^{H^\circ}} -H^{p-1}(\nabla \hat v) \langle \nabla H (\nabla \hat v), \eta_n \rangle \varphi_2\,  + H^{p-1}(\nabla  v) \langle \nabla H (\nabla  v), \eta_n \rangle\varphi_1 \,dx \\ & =  -\int_{\partial \mathcal{B}_{R_n}^{H^\circ}} \varphi_1 \left(H^{p-1}(\nabla \hat v) \langle \nabla H (\nabla \hat v), \eta_n \rangle \,  - H^{p-1}(\nabla  v) \langle \nabla H (\nabla  v), \eta_n \rangle\right) \,dx \\ &-\int_{\partial \mathcal{B}_{R_n}^{H^\circ}} (\varphi_2-\varphi_1 )H^{p-1}(\nabla \hat v) \langle \nabla H (\nabla \hat v), \eta_n \rangle \,dx \\& \le
        \int_{\partial \mathcal{B}_{R_n}^{H^\circ}}\frac{m}{v^{p-1}}\tilde C_p (|\nabla\hat v|^{p-2}+|\nabla v|^{p-2})|\nabla \hat v-\nabla v| +\frac{2m}{\hat v^{p-1}}M\alpha_2 |\nabla \hat v|^{p-1}\,dx, \\ &:=J_1+J_2.
    \end{split}
\end{equation}

where, in the last line, we used \eqref{eq:equivalentNorm} and \eqref{eq:BddH}.

We set $x=R_ny$, with $y\in \partial \mathcal{B}_{1}^{H^\circ}$. Using this change of variables and recalling \eqref{a_1}, \eqref{zeta1} and \eqref{abcd}, $J_1$ can be estimated as 

\begin{equation}\label{aaa1}
\begin{split}
    J_1\le \tilde C_1 &\int_{\partial \mathcal{B}_{1}^{H^\circ}} mR_n^{N-1+\mu_2(p-1)-(\mu_2+1)(p-1)}  |-\mu_2\overline c_1\nabla H^{\circ}(y)-\nabla w_n(y)|\,dy \\ &=\tilde C_1 R_n^{N-p-\epsilon}
    \end{split}
\end{equation}
where we choose $m=R_n^{-\epsilon}$, for $\epsilon>0$ fixed sufficiently small, and $\tilde C_1$ is a positive constant.

In a similar way $J_2$ is estimated as 
\begin{equation}\label{aaa}
   J_2\le \tilde C_2 R_n^{N-p-\epsilon},
\end{equation}
where $\tilde C_2$ is a positive constant.

Finally, if we combine all the estimates \eqref{I_1a}, \eqref{I_2a}, \eqref{I_3a}, \eqref{I_4a}, \eqref{aaa1}, \eqref{aaa} and passing to the limit for $R_n\rightarrow 0$, and then, exploiting the Fatou's lemma, for $R\rightarrow +\infty$,  we deduce that
\[\begin{split}
	&\int_{\{v \geq \hat v\}} (v^p+\hat v^p) H^p(\nabla(\ln v - \ln \hat v)) \, dx=0,
\end{split}\]
which implies $v \leq \hat v$ as we concluded in the proof of Proposition \ref{prop:Comparison2}. 

To prove that $\hat v\le v$, let us consider 
\begin{equation}\label{fun2}\varphi_1:=\varphi_R\frac{\min\{(\hat v^p-v^p)^+,m\}}{\hat v^{p-1}} \qquad \text{and} \qquad \varphi_2:=\varphi_R \frac{\min\{(\hat v^p- v^p)^+,m\}}{ v^{p-1}},\end{equation}
where $m>1$ and $\varphi_R$ is the standard cutoff function defined in \eqref{tao}. Using \eqref{fun2} in the weak formulation of \eqref{zioa}, proceeding in a similar way as above we obtain \eqref{v_1}. The only thing to check is that 
\begin{equation}\label{ap}
    \frac{1}{R}\int_{B_{2R}\setminus B_R} \hat v^pH^{p-1}(\nabla \ln  v)\,dx\rightarrow 0.
\end{equation}
Assumption \eqref{assunzionegradiente} ensures \eqref{ap}, and therefore we are done.

Now, assume that \eqref{a_2} holds and suppose that $p\ge 2$, the other case is similar. The proof is similar to the previous one and, for this reason, we omit some details. From definition of $\overline c_2$, there exist $r$ (sufficiently large) such that

\begin{equation}\label{b_112}
        v(x)\leq\frac{\overline c_2+a_n}{[H^{\circ}(x)]^{\mu_1}} \quad \text{in } \R^N\setminus B_r(0),
    \end{equation}
given $a_n\rightarrow 0$.

On the other hand, there exist sequences of radii $R_n$ and points $x_n$ with $R_n$ tending to infinity and $H^{\circ}(x_n)=R_n$, such that 
\begin{equation}\label{b_11}
        v(x_n)\geq\frac{\overline c_2-a_n}{[H^{\circ}(x_n)]^{\mu_1}}.
    \end{equation}
    Now we set 
\begin{equation}\label{zeta2}
    w_n(x):= R_n^{-\mu_1}v\left(\frac{x}{R_n}\right) \qquad \forall x\in \mathcal{B}_{2}^{H^\circ}\setminus \mathcal{B}_{1/2}^{H^\circ}.
\end{equation}
As in the previous case we obtain 
\begin{equation}\label{abcde}
w_{\infty}\equiv \frac{\overline c_2}{[H^{\circ}(x)]^{\mu_1}}  \quad \text{in }\mathcal{B}_{2}^{H^\circ}\setminus \mathcal{B}_{1/2}^{H^\circ}.
\end{equation}
Now we set $$\hat v:=\frac{\overline c_2}{[H^{\circ}(x)]^{\mu_1}}$$ and we remark that it solves \eqref{zioa}. We show that $v=\hat v$ in $\R^N$. 
To this aim, fix $\varepsilon>0$ sufficiently small and let $\varphi_\varepsilon \in C^\infty(\R^N)$ be a function such that 
\begin{equation}\label{tao2} \begin{cases}
	0 \leq \varphi_\varepsilon \leq 1\\
	\varphi_\varepsilon \equiv 0 \qquad \ \  \text{on } {B}_\varepsilon\\
	\varphi_\varepsilon \equiv 1 \qquad \ \  \text{on } ({B}_{2\varepsilon})^c\\
	|\nabla \varphi_\varepsilon| \leq \frac{2}{\varepsilon} \quad \text{on } {B}_{2\varepsilon} \setminus {B}_\varepsilon.
\end{cases}
\end{equation}

Let us define
\begin{equation}\label{fun22}\varphi_1:=\varphi_\varepsilon\frac{\min\{(v^p-\hat v^p)^+,m\}}{v^{p-1}} \qquad \text{and} \qquad \varphi_2:=\varphi_\varepsilon\frac{\min\{(v^p-\hat v^p)^+,m\}}{\hat v^{p-1}},\end{equation}
where $m>1$.

We note that $\varphi_1$ and $\varphi_2$ are good test function in any bounded domain $0\in \Omega .$ Testing \eqref{fun22} in \eqref{zioa} on the domain $\mathcal{B}_{R_n}^{H^\circ}$ we get 
\begin{equation}\label{six}
 	\begin{split}
		& \int_{ \mathcal{B}_{R_n}^{H^\circ}} H^{p-1}(\nabla v) \langle \nabla H (\nabla v), \nabla \varphi_1 \rangle dx \,  - \int_{ \mathcal{B}_{R_n}^{H^\circ}} H^{p-1}(\nabla \hat v) \langle \nabla H (\nabla \hat v), \nabla \varphi_2 \rangle dx\, \\
		&- \int_{ \mathcal{B}_{R_n}^{H^\circ}}\frac{\gamma}{H^\circ(x)^p} \left(v^{p-1}\varphi_1-v^{p-1}\varphi_2\right) \, dx\\
		&= \int_{\partial \mathcal{B}_{R_n}^{H^\circ}} H^{p-1}(\nabla v) \langle \nabla H (\nabla v), \eta_n \rangle \varphi_1\,  - H^{p-1}(\nabla \hat v) \langle \nabla H (\nabla \hat v), \eta_n \rangle\varphi_2 \,dx ,
	\end{split}   
\end{equation}
where $\eta_n$ is the outward unite normal vector at $\partial \mathcal B_{R_n}^{H^{\circ}}.$ By  \eqref{fun22} and Proposition \ref{prop:Lindqvist}, the right hand of \eqref{six}
 becomes 

\begin{equation}
    \begin{split}
        &\int_{\partial \mathcal{B}_{R_n}^{H^\circ}} -H^{p-1}(\nabla \hat v) \langle \nabla H (\nabla \hat v), \eta_n \rangle \varphi_2\,  + H^{p-1}(\nabla  v) \langle \nabla H (\nabla  v), \eta_n \rangle\varphi_1 \,dx \\ & =  \int_{\partial \mathcal{B}_{R_n}^{H^\circ}} -\varphi_1 \left(H^{p-1}(\nabla \hat v) \langle \nabla H (\nabla \hat v), \eta_n \rangle \,  - H^{p-1}(\nabla  v) \langle \nabla H (\nabla  v), \eta_n \rangle\right) \,dx \\ &-\int_{\partial \mathcal{B}_{R_n}^{H^\circ}} (\varphi_2-\varphi_1 )H^{p-1}(\nabla \hat v) \langle \nabla H (\nabla \hat v), \eta_n \rangle \,dx \\& \le
        \int_{\partial \mathcal{B}_{R_n}^{H^\circ}}\tilde C_pv (|\nabla\hat v|^{p-2}+|\nabla v|^{p-2})(\nabla \hat v-\nabla v) +\frac{v^p}{\hat v^{p-1}}M\alpha_2 |\nabla \hat v|^{p-1}\,dx, \\ &:=\tilde J_1+\tilde J_2.
    \end{split}
\end{equation}
where, in the last line, we used \eqref{eq:equivalentNorm} and \eqref{eq:BddH}.

We consider the following change of variables
$x=y/R_n$, with $y\in \partial \mathcal{B}_{1}^{H^\circ}$. Using this fact, by \eqref{zeta2} and \eqref{abcde}, we have  

\begin{equation}
    \begin{split}
        \tilde J_1 &\le  \overline C_1\int_{\partial \mathcal{B}_{1}^{H^\circ}}R_n^{1-N-\mu_1+(\mu_1+1)(p-1)}|-\mu_1\overline c_2y-\nabla w_n(y)|\,dy  \\ &=\overline C_1R_n^{-N+\mu_1p+p}
    \end{split}
\end{equation}

where $ \overline C_1$ is a positive constant.

In a similar way $\tilde J_2$ is estimated as 
\begin{equation}\label{aaa4}
   \tilde J_2\le  \overline C_2 R_n^{-N+\mu_1p+p},
\end{equation}
where $\overline C_2$ is a positive constant. We note that $-N+\mu_1p+p<0$ since $0\le \mu_1<(N-p)/p$. 
Proceeding as in the case \eqref{a_1}, we prove that 
\begin{equation}
\int_{\{v \geq \hat v\}} (v^p+\hat v^p) H^p(\nabla(\ln v - \ln \hat v)) \, dx=0,
\end{equation}
which implies $v \leq \hat v$ in $\Omega$ as we concluded in the proof of Proposition \ref{prop:Comparison2}. 

To prove that $\hat v\le v$, let us consider 
\begin{equation}\label{fun23}\varphi_1:=\varphi_\varepsilon\frac{\min\{(\hat v^p-v^p)^+,m\}}{\hat v^{p-1}} \qquad \text{and} \qquad \varphi_2:=\varphi_\varepsilon\frac{\min\{(\hat v^p- v^p)^+,m\}}{ v^{p-1}},\end{equation}
where $m>1$ and $\varphi_\varepsilon$ is defined as \eqref{tao2}. Testing \eqref{fun23} in \eqref{zioa}, using the assumption \eqref{assunzionegradiente2} and proceeding in a similar way as above we get \eqref{v_2}.
\end{proof}

At this point we are ready to prove 

\begin{proof}[Proof of Theorem \ref{stimagradiente}]
We prove \eqref{eq:estatInfgrad}, the other case is similar and it can be proved in a similar way. We start by showing the estimate from above. For $R_n$ tending to infinity, let us consider 
\begin{equation}\label{riscalata}
w_n(x):=R_n^{\mu_2}u(R_nx) \qquad \forall x\in  \mathcal{B}_{2}^{H^\circ}\setminus \mathcal{B}_{1/2}^{H^\circ}.  
\end{equation}
We remark that $w_n$ is uniformly bounded in $L^{\infty}(\mathcal{B}_{2}^{H^\circ}\setminus \mathcal{B}_{1/2}^{H^\circ})$ and it weakly solves 
\begin{equation}
    -\Delta_p^Hw_n- \frac{\gamma}{H^\circ(x)^p} w_n^{p-1}= R_n^{\mu_2(p-1)+p-\mu_2(p^*-1)}w_n^{p^*-1} \qquad \text{in } \R^N.
\end{equation}

Since $\mu_2(p-1)+p-\mu_2(p^*-1)<0$, by \cite{ACF,L,L2}, $w_n$ is also uniformly bounded in $C^{1,\alpha}(K),$ for $0<\alpha<1$ and for any compact set $K\subset \mathcal{B}_{2}^{H^\circ}\setminus \mathcal{B}_{1/2}^{H^\circ}$. For $R_n$ sufficiently large we get the estimate from above in \eqref{eq:estatInfgrad}.

Now we prove the estimate from below. Suppose by contradiction that there exist sequences of points $x_n$ such that 
\begin{equation}\label{bau}
[H^{\circ}(x_n)]^{\mu_2+1}|\nabla u(x_n)|\rightarrow 0 \qquad \text{for } |x_n|\rightarrow \infty.
\end{equation}
For $0<a<A$ fixed, let us consider 
$$w_n(x):=R_n^{\mu_2}u(R_nx).$$ For $n$ sufficiently large, from Theorem \ref{thm:asymptEst}, relabeling the constants, we have 
$$\frac{c}{A^{\mu_2}}\le w_n(x)\le \frac{C}{a^{\mu_2}}\qquad \text{in } \overline{\mathcal{B}_{A}^{H^\circ}\setminus \mathcal{B}_{a}^{H^\circ}}.$$ 
Furthermore, recalling the estimate from above of the gradients of the weak solution $u$, proved previously, we get
\begin{equation}
    |\nabla w_n(x)|\le \frac{\tilde C}{a^{\mu_2+1}} \quad\text{in } \overline{\mathcal{B}_{A}^{H^\circ}\setminus \mathcal{B}_{a}^{H^\circ}}.
\end{equation}
For $a,A$ fixed, by \cite{ACF,L,L2}, $w_n$ is also uniformly bounded in $C^{1,\alpha}(K),$ for $0<\alpha<1$ and for any compact set $K\subset \mathcal{B}_{A}^{H^\circ}\setminus \mathcal{B}_{a}^{H^\circ}$. Moreover $$w_n(x)\rightarrow w_{a,A} \qquad \text{in } B_A\setminus B_a,$$ in the norm $C^{1,\alpha '}$, for $0<\alpha' <\alpha.$ 
Moreover, since $w_n$ weakly solves
\begin{equation}-\Delta_p^Hw_n- \frac{\gamma}{H^\circ(x)^p} w_n^{p-1}= R_n^{\mu_2(p-1)+p-\mu_2(p^*-1)}w_n^{p^*-1} \qquad \text{in } \mathcal{B}_{A}^{H^\circ}\setminus \mathcal{B}_{a}^{H^\circ},
\end{equation}

 we deduce that 
\begin{equation}
-\Delta_p^Hw_{a,A}- \frac{\gamma}{H^\circ(x)^p} w_{a,A}^{p-1}=0 \qquad \text{in } \mathcal{B}_{A}^{H^\circ}\setminus \mathcal{B}_{a}^{H^\circ}.
\end{equation}
Now we take $a_j=1/j$ and $A_j=j$, for $j\in \N$ and we construct $w_{a_j,A_j}$ as above. For $j$ goes to infinity, using a standard diagonal process, we construct a limiting profile $w_{\infty}$ so that 
\begin{equation}
-\Delta_p^Hw_{\infty}- \frac{\gamma}{H^\circ(x)^p} w_{\infty}^{p-1}=0 \qquad \text{in } \R^N\setminus \{0\},
\end{equation}
with $w_{\infty}\equiv w_{a_j,A_j}$ in $\mathcal{B}_{A_j}^{H^\circ}\setminus \mathcal{B}_{a_j}^{H^\circ}$.

Since $w_{\infty}$ satisfies the assumptions \eqref{a_1}, \eqref{assunzionegradiente} of the Theorem \ref{tecnicanew}, we get 
\begin{equation}
    w_{\infty}(x)= \frac{\overline c_2}{[H^{\circ}(x)]^{\mu_2}},
\end{equation}
where we set $\overline{c_2}:=\limsup_{|x|\rightarrow 0} [H^{\circ}(x)]^{\mu_2}w_{\infty}(x)$.

Now we set $y_n=x_n/R_n$, and by \eqref{bau}, we deduce that $|\nabla w_n(y_n)|$ tends to zero as $R_n$ tends to infinity. This fact and the uniform convergence of the gradients imply that there exist $\overline y\in \partial \mathcal{B}_{1}^{H^\circ}$ such that
$$|\nabla w_{\infty}(\overline y)|=0.$$
This is an absurd since the solution $w_{\infty}$ has no critical points.
\end{proof}

\appendix
\section{}\label{appendix}
\label{existenceofSol}
In this section we prove the existence of a weak solution to problem \eqref{eq:ProbDoublyCritical} by means of a minimization problem. For this reason we  define the following minimization problem:
Let $S(\gamma)$ defined as
\begin{equation}\label{eq:quotient}
	S(\gamma):= \inf_{u\in \mathcal{D}^{1,p}(\R^N)\setminus \{0\}}\frac{\mathcal L(u)}{\displaystyle \left(\int_{\mathbb R^N}|u|^{p^*}\, dx\right) ^\frac{p}{p^*}},\end{equation}
where 
\begin{equation}\label{eq:lampedus}
	{\mathcal L(u)=\int_{\mathbb R^N}H^p(\nabla u)-\gamma \frac{|u|^p}{H^\circ(x)^p} \, dx.}
\end{equation}
Thanks to the Hardy inequality \eqref{eq:Hardy} we deduce that $S(\gamma)\geq0$.

\begin{thm}
    Let $0\le\gamma< C_H$. The problem \eqref{eq:ProbDoublyCritical} has a positive weak solution that it minimizes the quotient \eqref{eq:quotient}.
\end{thm}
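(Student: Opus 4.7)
The plan is to adapt the classical concentration-compactness strategy (in the spirit of Terracini \cite{terracini}) to the anisotropic setting. First, positivity of $S(\gamma)$ follows directly from the Hardy inequality (Theorem \ref{thm:Hardy}) and the anisotropic Sobolev embedding: for any $u\in\mathcal{D}^{1,p}(\R^N)$ one has
\[
\mathcal{L}(u) \geq \left(1-\frac{\gamma}{C_H}\right)\int_{\R^N} H^p(\nabla u)\,dx \geq \left(1-\frac{\gamma}{C_H}\right)\alpha_1^p\, S_{\mathrm{Sob}}\left(\int_{\R^N}|u|^{p^*}\,dx\right)^{p/p^*},
\]
so $S(\gamma)>0$ and every minimizing sequence is bounded in $\mathcal{D}^{1,p}(\R^N)$.

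I would then take a nonnegative minimizing sequence $\{u_n\}$ with $\|u_n\|_{L^{p^*}}=1$ (using that $H$ is even so $|u|\in \mathcal{D}^{1,p}$ has the same $\mathcal{L}$ value), and exploit the scale invariance of both $\mathcal{L}$ and the constraint under the dilation $u_\lambda(x):=\lambda^{(N-p)/p}u(\lambda x)$, which is the additional symmetry generated by the critical Hardy term. Choosing $\lambda_n>0$ appropriately I would normalize so that
\[
\int_{\mathcal{B}_1^{H^\circ}} |u_n|^{p^*}\,dx = \frac{1}{2},
\]
fixing the scale at which mass may concentrate. Up to a subsequence, $u_n\rightharpoonup u$ in $\mathcal{D}^{1,p}(\R^N)$, a.e.\ and in $L^p_{\mathrm{loc}}(\R^N)$.

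The heart of the argument is the application of Lions' concentration-compactness principle, adapted to include scale-invariant atoms at the origin and at infinity. The limit measures associated with $H^p(\nabla u_n)\,dx$ and $|u_n|^{p^*}\,dx$ decompose into a regular part plus at most countably many Dirac masses at points $x_j\in\R^N\setminus\{0\}$ together with possible atoms at $0$ and at $\infty$. Each concentration piece must be ruled out. At a point $x_j\neq 0$ the Hardy weight is locally bounded, so the local best Sobolev constant is the anisotropic Sobolev constant $S(0)$, giving $\mu_j\geq S(0)\,\nu_j^{p/p^*}$; since $S(\gamma)<S(0)$ (verified by testing $\mathcal{L}$ on the anisotropic Aubin--Talenti profile, whose Hardy energy is strictly positive) together with the elementary inequality $t\leq t^{p/p^*}$ for $t\in[0,1]$, subadditivity forces $\nu_j=0$. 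Atoms at $0$ or at $\infty$ would force the unit mass to live entirely at these singular scales, which is incompatible with the normalization above and with a nontrivial regular contribution. Consequently $|u_n|^{p^*}\to |u|^{p^*}$ in $L^1(\R^N)$, so $\|u\|_{L^{p^*}}=1$; weak lower semicontinuity of $\mathcal{L}$ (valid since $\gamma<C_H$, so $\mathcal{L}$ is equivalent to the square of a norm modulo the Hardy shift) yields $\mathcal{L}(u)\leq S(\gamma)$, hence equality.

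By the Lagrange multiplier rule $u$ satisfies $-\Delta_p^H u-\gamma\, u^{p-1}/[H^\circ]^p = c\, u^{p^*-1}$ in $\R^N$ for an explicit $c=c(S(\gamma),p,p^*)>0$, and the rescaling $u\mapsto c^{1/(p^*-p)}u$ yields a weak solution of \eqref{eq:ProbDoublyCritical}; nonnegativity is preserved throughout, while strict positivity follows from the anisotropic strong minimum principle (\cite{lucio}, as already invoked in Section \ref{AsympEst}). The main obstacle is the concentration-compactness step itself: because the Hardy potential is critical, two extra ``singular scales'' (origin and infinity) appear at which compactness of the embedding may fail, so both the dilation normalization and the strict spectral gap $S(\gamma)<S(0)$ are indispensable to ensure that the weak limit captures the full unit mass.
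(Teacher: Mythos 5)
Your proposal is correct in outline and lands in the same place, but it travels by a genuinely different road. You invoke Lions' concentration--compactness principle, enlarged to accommodate dilation-invariant atoms at the origin and at infinity (the two scales at which the anisotropic Hardy potential is critical), normalize via the L\'evy-type condition $\int_{\mathcal{B}_1^{H^\circ}}|u_n|^{p^*}=1/2$, and conclude by the strict concavity of $t\mapsto t^{p/p^*}$ together with the spectral gap $S(\gamma)<S(0)$. The paper does not call on the abstract Lions decomposition at all. Instead it normalizes the \emph{energy} split (half of $\mathcal{L}(u_n)$ inside $B_{R_n}$, half outside, then rescales so the split occurs at radius $1$) and establishes by hand a tailored dichotomy (Lemma \ref{lem:tesilem}): for a minimizing Palais--Smale sequence weakly converging to zero, either all gradient energy inside some sphere or all gradient energy outside it must vanish. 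The dichotomy is obtained by finding a ``good'' sphere via Fubini, using the compact trace embedding to see that $u_n\to 0$ on it, cutting $u_n$ into two pieces joined across an annulus by solutions of anisotropic Dirichlet problems, and then exploiting energy additivity and the same strict subadditivity. Applying the lemma twice forces the energy of $w_n$ into a fixed shell, where the Hardy weight is harmless, giving $S(0)\le S(\gamma)$, which contradicts $S(\gamma)<S(0)$. So the two proofs share the essential ingredients --- the dilation normalization to fix a concentration scale, and the inequality $S(\gamma)<S(0)$ tested on the $\gamma=0$ Aubin--Talenti minimizer of \cite{CFR} --- but the paper's argument is self-contained and avoids having to verify a general anisotropic-Hardy concentration--compactness decomposition (which, strictly speaking, you would need to prove rather than merely cite). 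One small point: your sentence ``atoms at $0$ or at $\infty$ would force the unit mass to live entirely at these singular scales'' compresses two separate steps and should be unpacked --- strict subadditivity of $t\mapsto t^{p/p^*}$ first shows at most one of the quantities $\|u\|_{p^*}^{p^*},\nu_0,\nu_\infty$ can be nonzero and it must equal $1$, and \emph{then} the normalization excludes $\nu_0=1$ and $\nu_\infty=1$ --- but this is a matter of exposition rather than a gap.
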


\begin{proof} 
	Let $\{u_n\}$ be a minimizing sequence to \eqref{eq:quotient}. Without loss of generality, because of the homogeneity of the quotient in  
	\eqref{eq:quotient}, we assume that 
	\begin{equation}\label{eq:slavin}
		\mathcal L(u_n)=1.\end{equation}
	By Hardy inequality (observe that $H$ is a norm equivalent to the euclidean one), 
	$\mathcal L(\cdot)^{1/p}$
	is an equivalent norm to the standard   one $\|\cdot \|_{\mathcal{D}^{1,p}(\R^N)}$, we have that 
	\[
	\|u_n\|_{\mathcal{D}^{1,p}(\R^N)}\leq C,
	\]
	with $C$ that does not depend on $n$. Hence up to a subsequence $u_n \rightharpoonup u_0$ in $\mathcal{D}^{1,p}(\R^N)$. Moreover let us also assume  that (we will prove it later)
	\begin{equation}\label{eq:weaklimit}
		u_0 \not\equiv0.
	\end{equation}
	Recalling \eqref{eq:quotient}, using the weak lower semicontinuity of the norm and \eqref{eq:slavin}, we deduce
	\begin{equation}\label{eq:Slambda}
		\begin{split}
		S(\gamma)\leq \frac{\mathcal L(u_0)}{\left(\displaystyle\int_{\mathbb R^N}u_0^{p^*}\, dx\right) ^\frac{p}{p^*}}&\leq \liminf_n \frac{\mathcal L(u_n)}{\displaystyle \left(\int_{\mathbb R^N}u_0^{p^*}\, dx\right) ^\frac{p}{p^*}}\\
		& =\frac{1}{\displaystyle\left(\int_{\mathbb R^N}u_0^{p^*}\, dx\right) ^\frac{p}{p^*}}
		\end{split}
	\end{equation}
	Using Sobolev inequality we have that  $u_n\rightarrow u_0$ a.e. in $\mathbb R^N$. Therefore by the Breis-Lieb~result \cite{BL}, it follows that 
	\[\int_{\mathbb R^N}u_0^{p^*}\, dx=\int_{\mathbb R^N}u_n^{p^*}\, dx- \int_{\mathbb R^N}(u_n-u_0)^{p^*}\, dx+o(1).\]
	Then from \eqref{eq:Slambda} we obtain 
	\begin{equation}\label{eq:slevin1}
		S(\gamma)\leq \frac{1}{\displaystyle \left(\int_{\mathbb R^N}u_n^{p^*}\, dx- \int_{\mathbb R^N}(u_n-u_0)^{p^*}\, dx+o(1)\right) ^\frac{p}{p^*}}
	\end{equation}
	From \eqref{eq:quotient}, we deduce
	\[\mathcal L(u_n)=S(\gamma) \left(\int_{\mathbb R^N}u_n^{p^*}\, dx\right) ^\frac{p}{p^*}+o(1) \]
	and
	\[\mathcal L(u_0-u_n)\geq S(\gamma) \left(\int_{\mathbb R^N}|u_0-u_n|^{p^*}\, dx\right) ^\frac{p}{p^*}.\]
	Using these last inequalities in \eqref{eq:slevin1} we get
	\begin{equation}\label{eq:slevin2}
	\begin{split}
		S(\gamma)&\leq S(\gamma)\frac{1}{\left(\mathcal L^\frac{p^*}{p}(u_n)- \mathcal L^\frac{p^*}{p}(u_0-u_n)+o(1)\right) ^\frac{p}{p^*}}\\
		&\leq S(\gamma)\frac{1}{\left( 1- \mathcal L^\frac{p^*}{p}(u_0-u_n)+o(1)\right) ^\frac{p}{p^*}},
	\end{split}
	\end{equation}
	where in the last line we used \eqref{eq:slavin}. Taking the limit superior of \eqref{eq:slevin2} we get a contradicition, i.e. $S(\gamma)<S(\gamma)$ unless the sequence $\mathcal L(u_0-u_n)\rightarrow 0$. Hence, since $\mathcal L(\cdot)^{1/p}$
	is an equivalent norm to  $\|\cdot \|_{\mathcal{D}^{1,p}(\R^N)}$, we finally get that $u_n\rightarrow u_0$ in  $\mathcal{D}^{1,p}(\R^N)$. Therefore passing to the limit in \eqref{eq:quotient}, we obtain 
	\[S(\gamma)=\frac{\mathcal L(u_0)}{\left(\int_{\mathbb R^N}|u_0|^{p^*}\, dx\right) ^\frac{p}{p^*}},\] namely  $u_0$ (eventually redefining it as $Cu_0$, for some positive constant) is a  weak solution to \eqref{eq:ProbDoublyCritical}.
	
	Now we prove that actually \eqref{eq:weaklimit} holds, concluding indeed the proof. Let $\{u_n\}$ the minimizing sequence such that \eqref{eq:slavin} holds. For every $n$ let us take a sequence of radii $R_n$ such that  
	\begin{equation}\label{bc}
	    \int_{B_{R_n}}H^p(\nabla u_n)-\gamma \frac{|u_n|^p}{H^\circ(x)^p} \, dx=\int_{\mathbb R^N\setminus B_{R_n}}H^p(\nabla u_n)-\gamma \frac{|u_n|^p}{H^\circ(x)^p} \, dx= \frac{\mathcal L(u_n)}{2}
     \end{equation}
	and let us define the rescaled sequence 
	\begin{equation}\label{eq:riun}
		w_n=R_n^{\frac{p-N}{p}}u_n\left (\frac{x}{R_n}\right).
	\end{equation}
	Using assumptions $(h_H)$, \eqref{bc} and \eqref{eq:riun} we deduce that
	\begin{equation}\label{eq:datmec}
		\int_{B_{1}}H^p(\nabla w_n)-\gamma \frac{|w_n|^p}{H^\circ(x)^p} \, dx=\int_{\mathbb R^N\setminus B_{1}}H^p(\nabla w_n)-\gamma \frac{|w_n|^p}{H^\circ(x)^p} \, dx= \frac{\mathcal L(u_n)}{2}
	\end{equation}
	Now we prove the following 
	\begin{lem}\label{lem:tesilem}
		Let  $\{u_n\}$ be a minimizing sequence, weakly converging to zero. Then, for every ball $B_r$ and for every $\varepsilon\in (-r,r)$ there exists $\rho \in (0,\varepsilon)\cup (\varepsilon, 0)$ such that for a subsequence
		\begin{equation}\label{eq:tesilem}
			\text{either}\,\, \int_{B_{r+\rho}}H^{p}(\nabla u_{n})\, dx\rightarrow 0, \qquad \text{or}\,\, \int_{\mathbb R^N \setminus B_{r+\rho}}H^{p}(\nabla u_{n})\, dx\rightarrow 0.
		\end{equation}
	\end{lem}
	\begin{proof}
		By the homogeneity of the quotient in \eqref{eq:quotient} we can assume that the minimizing sequence $\{u_n\}$ is such that $\|u_n\|_{L^{p^*}(\mathbb R^N)}=1$, so that $\mathcal L(u_n)\rightarrow S(\gamma)$. By Ekeland's $\varepsilon-$principle, we can suppose that the minimizing sequence has the Palais-Smale property, that is 
		\begin{equation}\label{eq:palaisSmale}
			\int_{\R^N}H^{p-1}(\nabla u_n) \langle \nabla H(\nabla u_n) , \nabla \varphi \rangle - 
			\frac{\gamma}{H^\circ(x)^p} u_n^{p-1} \varphi \, dx= S(\gamma)\int_{\R^N} u_n^{p^*-1} \varphi \, dx +o(1)\|\varphi\|_{\mathcal{D}^{1,p}(\R^N)},  
		\end{equation}
		for all $\varphi \in \mathcal{D}^{1,p}(\R^N)$
		We have that 
		\begin{equation*}
			\int_r^{r+\varepsilon}\, d\rho\int_{\rho S^{N-1}}H^{p}(\nabla u_n)= \int_{B_{r+\varepsilon}\setminus B_r}H^{p}(\nabla u_n)\end{equation*}
		is bounded. Then we can find $\rho \in (0,\varepsilon)$ such that for infinitely many $n$'s it holds
		\begin{equation*}
			\int_{(r+\rho) S^{N-1}}H^{p}(\nabla u_n)\leq C \int_{B_{r+\varepsilon}\setminus B_r}H^{p}(\nabla u_n),
		\end{equation*}
		for some positive constant $C$ and hence up to redefining the constant  
		\begin{equation*}
			\int_{(r+\rho) S^{N-1}}|\nabla u_n|^p\leq C \int_{B_{r+\varepsilon}\setminus B_r}H^{p}(\nabla u_n).
		\end{equation*}
		Therefore (see \cite[Theorem A.8]{Stru}) since \begin{equation}\label{eq:Struwe}
			W^{1,p}((r+\rho) S^{N-1})\hookrightarrow W^{1-\frac{1}{p},p}((r+\rho) S^{N-1})\hookrightarrow L^p((r+\rho) S^{N-1})\end{equation}
		with both embedding compact, we can assume that a subsequence  converges strongly to some limit, say $u$ in the trace space $W^{1-\frac{1}{p},p}((r+\rho) S^{N-1})$. Using the fact that the trace operator has a continuous embedding from $W^{1,p}(B_{r+\rho})$ into $W^{1-\frac{1}{p},p}((r+\rho) S^{N-1})$, by the weak convergence to zero of $\{u_n\}$, we deduce that indeed $u\equiv 0$.

		Now we show the following

		{\sc Claim:} Let $\Omega\subset \mathbb R^N$ a generic smooth bounded domain. The inverse operator \[(-\Delta_p^H)^{-1}: W^{1-\frac 1p,p}(\partial\Omega)\rightarrow  W^{1,p}(\Omega),\] is continuous. Indeed we consider a succession $g_n\rightarrow g$ in $W^{1-\frac 1p,p}(\Omega)$, and let $u_n,u\in W^{1,p}(\Omega)$ be the solutions to 
		\begin{equation}\label{eq:inverseoperator}
			\begin{cases}
				-\Delta_p^Hu=0 & \text{in}\,\, \Omega
				\\
				u=g & \text{on}\,\, \partial\Omega,   \quad
			\end{cases}
         \quad
         \begin{cases}
				-\Delta_p^Hu_n=0 & \text{in}\,\, \Omega
				\\
				u_n=g_n & \text{on}\,\, \partial\Omega.   \quad
			\end{cases}
   \end{equation} 
		The solution to \eqref{eq:inverseoperator} can be obtained minimizing the functional 
		\[J(u)=\frac{1}{p}\int_{\Omega}H^{p}(\nabla u)\, dx\] on the set $\left\{\{g\}+ W^{1,p}_0(\Omega)\right\}$, $\left\{\{g_n\}+ W^{1,p}_0(\Omega)\right\}$ respectively. Since $(u-g),(u_n-g_n)\in W^{1,p}_0(\Omega)$, integrating by parts \eqref{eq:inverseoperator} and subtracting the equations, we obtain 
		\begin{equation*}
			\begin{split}
			0&=\int_{\Omega}H^{p-1}(\nabla u)\langle \nabla H(\nabla u) , \nabla (u-g) \rangle\, dx\\
			& -\int_{\Omega}H^{p-1}(\nabla u_n)\langle \nabla H(\nabla u_n) , \nabla (u_n-g_n) \rangle\, dx
   \\&=\int_{\Omega}\langle H^{p-1}(\nabla u) \nabla H(\nabla u)-H^{p-1}(\nabla u_n) \nabla H(\nabla u_n) , \nabla u-\nabla u_n\rangle\, dx\\&
   -\int_{\Omega}\langle H^{p-1}(\nabla u) \nabla H(\nabla u) -H^{p-1}(\nabla u_n) \nabla H(\nabla u_n), \nabla g-\nabla g_n \rangle\, dx.
		\end{split}
		\end{equation*}
		We recall (see \cite[Lemma $4.1$]{CSR}) that for $x\in \R^N\setminus \{0\}$, $y\in \R^N$ there exist a constant $C>0$ such that 

\begin{equation}\label{ok}
    \langle H^{p-1}(x)\nabla H(x)-H^{p-1}(y)\nabla H(y),x-y\rangle \geq C(|x|+|y|)^{p-2}|x-y|^2,
\end{equation}
Therefore, by \eqref{ok} and \eqref{zaia} we get
  
		\begin{equation}\label{eq:luglio}
  \begin{split}
			C&\int_{\Omega}\left(|\nabla u|+|\nabla u_n|)^{p-2}\left(|\nabla u-\nabla u_n|\right)^2\right)\, dx\\ &\leq \tilde C_p\int_{\Omega}(|\nabla u|+|\nabla u_n|)^{p-2}|\nabla u-\nabla u_n||\nabla g-\nabla g_n|\, dx \\ &\leq \tilde C_p \left(\int_\Omega(|\nabla u|+|\nabla u_n|)^p\,dx\right)^{\frac{p-1}{p}}\left(\int_\Omega|\nabla g-\nabla g_n|^p\, dx\right)^{\frac1p},
   \end{split}
		\end{equation}
  where in the last inequality we have used the H\"older inequality.

		We recall that by $W^{1-\frac 1p,p}(\partial \Omega)$ we denote the space of  traces $u_{|\partial \Omega}$, namely the set (of equivalence classes) $\left\{\{u\}+ W^{1,p}_0(\Omega),  u\in W^{1,p}(\Omega)\right\}$, endowed with the trace norm 
		\begin{equation}\label{eq:Arnold1}\|u_{|\partial \Omega}\|_{W^{1-\frac 1p,p}(\partial \Omega)}=\inf \{\|v\|_{W^{1,p}(\Omega)}\, :\, u-v \in W^{1,p}_0(\Omega) \}.\end{equation}
		Hence using \eqref{eq:luglio} letting the boundary data  $g_n\rightarrow g$ in the sense of  \eqref{eq:Arnold1} we obtain the claim.
		
		\
		
		Let us define the two auxiliary sequences of $\mathcal{D}^{1,p}(\R^N)$ as follows: 
		\begin{equation*}
			u_{1,n}(x)=
			\begin{cases}
				u_n(x)& \text{if}\,\, x\in B_{r+\rho}\\
				w_{1,n}(x) & \text{if}\,\, x\in B_{r+\varepsilon}\setminus B_{r+\rho}\\
				0 & \text{elsewhere};
			\end{cases}
		\end{equation*}
		and
		\begin{equation}\label{eq:fellfall}
			u_{2,n}(x)=
			\begin{cases}
				0 & \text{if} \,\, x\in B_{r-\varepsilon}\\
				w_{2,n}(x) & \text{if}\,\, x\in B_{r+\rho}\setminus B_{r-\varepsilon}\\
				u_n(x)& \text{elsewhere},
			\end{cases}
		\end{equation}
		where $w_{1,n}$ respectively $w_{2,n}$ denote the solutions to 
		\begin{equation}\label{eq:Arnold2}
			\begin{cases}
				-\Delta_p^Hw_{1,n}=0 & \text{in}\,\,  B_{r+\varepsilon}\setminus B_{r+\rho}
				\\
				w_{1,n}=0& \text{on}\,\, \partial B_{r+\varepsilon}\\
				w_{1,n}=u_n & \text{on}\,\, \partial B_{r+\rho}
			\end{cases}
		\end{equation} 
		respectively
		\begin{equation}\label{eq:Arnold3}
			\begin{cases}
				-\Delta_p^Hw_{2,n}=0 & \text{in}\,\,  B_{r+\rho}\setminus B_{r-\varepsilon}
				\\
				w_{2,n}=u_n & \text{on}\,\, \partial B_{r+\rho}\\
				w_{2,n}=0& \text{on}\,\, \partial B_{r-\varepsilon}.
			\end{cases}
		\end{equation} 
		Since $u_n \rightarrow 0$  on $\partial B_{r+\rho}$   in the $W^{1-\frac 1p,p}$ norm, see \eqref{eq:Struwe}, by  the  above claim we immediately get that both
		\begin{equation}\label{eq:elei1}w_{1,n}\overset{W^{1,p}}{\longrightarrow} 0\quad \text {in }B_{r+\varepsilon}\setminus B_{r+\rho}\qquad \text{and}\qquad w_{2,n}\overset{W^{1,p}}{\longrightarrow} 0\quad \text {in }B_{r+\rho}\setminus B_{r-\varepsilon}.\end{equation}
		Using $u_{1,n}$ as test function in \eqref{eq:palaisSmale} we obtain
		\begin{eqnarray*}
			&&\int_{\R^N}H^{p-1}(\nabla u_n) \langle \nabla H(\nabla u_n) , \nabla u_{1,n} \rangle - 
			\frac{\gamma}{H^\circ(x)^p} u_n^{p-1} u_{1,n} \, dx\\
			&&= S(\gamma)\int_{\R^N} u_n^{p^*-1} u_{1,n} \, dx +o(1)\|u_{1,n}\|_{\mathcal{D}^{1,p}(\R^N)} 
		\end{eqnarray*}
		and  recalling the definition of $u_{1,n}$  and by \eqref{eq:elei1},  we obtain 
		\begin{equation*}
			\int_{B_{r+\rho}}H^{p}(\nabla u_n)  - 
			\frac{\gamma}{H^\circ(x)^p} u_n^{p}  \, dx= S(\gamma)\int_{B_{r+\rho}} u_n^{p^*} \, dx +o(1)= S(\gamma)\int_{B_{r+\rho}} u_{1,n}^{p^*} \, dx +o(1)
		\end{equation*}
		In the same way, using $u_{2,n}$ as test function in \eqref{eq:palaisSmale} we obtain
		\begin{equation*}
			\int_{\mathbb R^N\setminus B_{r+\rho}}H^{p}(\nabla u_n)  - 
			\frac{\gamma}{H^\circ(x)^p} u_n^{p}  \, dx= S(\gamma)\int_{\mathbb R^N\setminus B_{r+\rho}} u_n^{p^*} \, dx +o(1)= S(\gamma)\int_{\mathbb R^N\setminus B_{r+\rho}} u_{2,n}^{p^*} \, dx +o(1).
		\end{equation*}
		Moreover, by definition \eqref{eq:lampedus}, using the two sequences $\{u_{1,n}\}$ and  $\{u_{2,n}\}$  we   infer that actually
		\[{\mathcal L(u_{1,n})=\int_{B_{r+\rho}}H^{p}(\nabla u_n)  - 
			\frac{\gamma}{H^\circ(x)^p} u_n^{p}  \, dx+o(1)}
		\]
		and 
		\[{\mathcal L(u_{2,n})=\int_{\mathbb R^N\setminus B_{r+\rho}}H^{p}(\nabla u_n)  - 
			\frac{\gamma}{H^\circ(x)^p} u_n^{p}  \, dx+o(1)}
		\]
		so that
		\begin{equation}\label{eq:tattoo}
			\mathcal L(u_{n})=\mathcal L(u_{1,n})+\mathcal L(u_{2,n})+o(1)
		\end{equation}
		and $\|u_n\|_{p^*}^{p^*}=\|u_{1,n}\|_{p^*}^{p^*}+\|u_{2,n}\|_{p^*}^{p^*}+ o(1).$ Let us  assume for example, that $\{u_{1,n}\}$ does not converges to zero. Since $\{u_n\}$ is a minimizing sequence we have that (see \eqref{eq:quotient}) 
		\[\mathcal{L}(u_n)=S(\gamma)\|u_n\|^p_{p^*}+o(1)\] and  also that $\mathcal{L}(u_{2,n})\geq S(\gamma)\|u_{2,n}\|^p_{p^*}$ and
		\begin{eqnarray*}
			\frac{\mathcal{L}(u_{1,n})}{\|u_{1,n}\|^p_{p^*}}&=&\frac{\mathcal{L}(u_{n})-\mathcal{L}(u_{2,n})+o(1)}{(\|u_{n}\|^{p^*}_{p^*}-\|u_{2,n}\|^{p^*}_{p^*}+o(1))^\frac{p}{p^*}}\\
			&\leq& S(\gamma)\frac{\mathcal{L}(u_{n})-\mathcal{L}(u_{2,n})+o(1)}{(\mathcal{L}(u_{n})^{\frac{p^*}{p}}-\mathcal{L}(u_{2,n})^{\frac{p^*}{p}}+o(1))^\frac{p}{p^*}}.
		\end{eqnarray*}
		By \eqref{eq:tattoo} we deduce that
		\[\limsup_n \mathcal{L}(u_{2,n})=\limsup_n (\mathcal{L}(u_{n})- \mathcal{L}(u_{1,n})+o(1))< \limsup_n \mathcal{L}(u_{n}),\]
		by some computations we deduce that actually 
		\[
		\limsup_n\frac{\mathcal{L}(u_{1,n})}{\|u_{1,n}\|^p_{p^*}}<S(\gamma),
		\]
		a contradiction with \eqref{eq:quotient} unless $\mathcal{L}(u_{2,n})$ tends to zero. 
		Using Hardy inequality in \eqref{eq:lampedus}, recalling \eqref{eq:fellfall}, 
		we obtain 
		\begin{equation*}
			\begin{split}
			0 \leftarrow \int_{\mathbb R^N}H^{p}(\nabla u_{2,n})\, dx&=\int_{\mathbb R^N \setminus B_{r+\rho}}H^{p}(\nabla u_{n})\, dx+ \int_{B_{r+\rho}\setminus B_{r-\varepsilon}}H^{p}(\nabla w_{2,n})\, dx\\
			&=\int_{\mathbb R^N \setminus B_{r+\rho}}H^{p}(\nabla u_{n})\, dx +o(1).
		\end{split}
		\end{equation*}
		The other case of \eqref{eq:tesilem} can be proved arguing in the same as we have done above, assuming that $\{u_{2,n}\}$ does not converge to zero. This concludes the proof of the lemma.
	\end{proof} 
	Using the invariance of the problem under the scaling 
	$R_n^{{(p-N)}/{p}}u({x}/{R_n})$, the sequence $\{w_n\}$ (see \eqref{eq:riun} and \eqref{eq:datmec}) is still a minimizing sequence bounded in $\mathcal{D}^{1,p}(\R^N)$: hence it admits (up to a subsequence) a weakly convergence sequence. We want to show that the weak limit cannot be zero. We argue by contradiction and we apply Lemma \ref{lem:tesilem} twice choosing $r=1$ and $\varepsilon=\pm 1/4$ respectively. We find the existence of $\rho^+\in (0,1/4)$ and $\rho^-\in (-1/4, 0)$ such that \eqref{eq:tesilem} holds. Using the alternative \eqref{eq:tesilem} together with  \eqref{eq:datmec}, we obtain that 
	\begin{equation}\label{eq:rmol}
		\int_{B_{1+\rho^-}}H^p(\nabla w_n)\, dx \rightarrow 0\quad \text{and}\quad \int_{\mathbb R^N\setminus B_{1+\rho^+}}H^p(\nabla w_n)\, dx \rightarrow 0.
	\end{equation}
	Since $w_n\rightharpoonup 0$ using the strong convergence on compacts $K$ of $w_n \rightarrow 0$ in $L^p(K)$ we deduce that
	\begin{equation}\label{bio}
	 \int_{B(0)_{\frac 32}\setminus B(0)_{\frac12}}\frac{|w_n|^p}{H^\circ(x)^p} \, dx\rightarrow 0.\
  \end{equation}
	Let us take a smooth   cut-off function $\eta$, with $0\leq \eta\leq 1$, such that $\eta\equiv 1$ in $B(0)_{ 5/4}\setminus B(0)_{3/4}$ and $\eta \equiv 0$ in $\mathbb R^N\setminus (B(0)_{ 3/2}\setminus B(0)_{1/2})$. Take in to account Hardy inequality, \eqref{eq:rmol} and \eqref{bio} we deduce that 
	\[\int_{\mathbb R^N}|H^p(\eta \nabla w_n)-H^p(\nabla w_n)|\, dx\rightarrow0,\]
	for $n\rightarrow +\infty$ and therefore we infer that
	\[\int_{\mathbb R^N}H^p(\eta \nabla w_n)\, dx=\int_{\mathbb R^N}H^p(\nabla w_n)\, dx+o(1).\] 
	Then, recalling also \eqref{eq:quotient} we get 
	\begin{equation*}
		S(0)\leq\frac{\displaystyle\int_{\mathbb R^N}H^p(\eta \nabla w_n)\, dx}{\left(\displaystyle \int_{\mathbb R^N}|\eta w_n|^{p^*}\, dx\right) ^\frac{p}{p^*}}\leq \frac{\mathcal L(w_n)+o(1)}{\displaystyle\left(\int_{\mathbb R^N}| w_n|^{p^*}\, dx\right) ^\frac{p}{p^*}+o(1)}.
	\end{equation*}
	Passing to the limit  we obtain 
	\begin{equation}\label{eq:S(0)}
		S(0)\leq S(\gamma).
	\end{equation}
	We claim that \eqref{eq:S(0)} is not possible. Indeed let $u_0$ be a minimizer of  \eqref{eq:quotient} for $\gamma=0$ (\cite{CFR}). Therefore we clearly deduce the following
	\begin{equation}\label{eq:contradbocc}
		\begin{split}
		S(\gamma)&\leq \frac{\displaystyle \int_{\mathbb R^N}H^p(\nabla u_0)-\gamma \frac{|u_0|^p}{H^\circ(x)^p} \, dx}{\displaystyle \left(\int_{\mathbb R^N}|u_0|^{p^*}\, dx\right) ^\frac{p}{p^*}} <\frac{\displaystyle \int_{\mathbb R^N}H^p(\nabla u_0) \, dx}{\displaystyle \left(\int_{\mathbb R^N}|u_0|^{p^*}\, dx\right) ^\frac{p}{p^*}}=S(0).
		\end{split}
	\end{equation}
	Inequality \eqref{eq:contradbocc} gives the desired contradiction, concluding the proof. 
\end{proof}

\end{document}